\newtheorem{ithm}{Theorem}
\newtheorem{theorem}{Theorem}[section]
\newtheorem{definition}[theorem]{Definition}
\newtheorem{lemma}[theorem]{Lemma}
\newtheorem{remark}[theorem]{Remark}
\newtheorem{corollary}[theorem]{Corollary}
\newtheorem{proposition}[theorem]{Proposition}
\newtheorem{notation}[theorem]{Notation}
\newenvironment{proof}[1][Proof]{\par\addvspace{2mm}\noindent\textbf{#1.} }{\ \rule{0.5em}{0.5em}\par\vspace{4mm}}
\newenvironment{aknowledgments}{\par\addvspace{4mm}\noindent
{\it Aknowledgments. }\rm}{\par\vspace{1mm}}
\def\leftnote#1{\vadjust{\setbox1=\vtop{\hsize 20mm\parindent=0pt\bf\baselineskip=9pt\rightskip=2mm plus 2mm#1}\hbox{\kern-20mm\smash{\box1}}}}
\newcommand{\Q}{\mathbb{Q}}
\newcommand{\C}{\mathbb{C}}
\newcommand{\Z}{\mathbb{Z}}
\newcommand{\A}{{\mathcal A}}
\newcommand{\N}{{\mathcal N}}
\newcommand{\R}{{\mathcal R}}
\newcommand{\I}{\mathcal{I}}
\newcommand{\U}{\mathcal{U}}
\newcommand{\T}{\mathcal{T}}
\newcommand{\QQ}{{\mathcal Q}}
\newcommand{\LL}{{\mathcal L}}
\newcommand{\PP}{{\mathcal P}}
\newcommand{\D}{{\mathfrak D}}
\newcommand{\pl}{{\mathfrak l}}
\newcommand{\f}{{\mathfrak f}}
\newcommand{\bo}{{\mathfrak O}}
\newcommand{\p}{{\mathfrak p}}
\newcommand{\vareps}{\varepsilon}
\newcommand{\dsp}{\displaystyle}
\newcommand{\wt}{\widetilde}
\newcommand{\wh}{\widehat}
\newcommand{\normres}[3]{\N_{#1}(#2\,|\,#3)}
\def\RT_#1{{\R\T_{#1}^\star}}
\DeclareMathOperator{\Ind}{Ind}
\DeclareMathOperator{\ind}{ind}
\DeclareMathOperator{\coinf}{coinf}
\DeclareMathOperator{\Gal}{Gal}
\DeclareMathOperator{\car}{char}
\DeclareMathOperator{\ver}{ver}
\DeclareMathOperator{\Det}{Det}
\DeclareMathOperator{\Dab}{det}
\DeclareMathOperator{\Hom}{Hom}
\DeclareMathOperator{\ord}{ord}
\DeclareMathOperator{\Cl}{Cl}
\def\isom{\buildrel\sim\over{\longrightarrow}}
\begin{document}
\bibliographystyle{amsalpha}
\title{Self-Dual Integral Normal Bases and Galois Module Structure}

\author{Erik Jarl Pickett and St\'ephane Vinatier}
%
%

\maketitle
\begin{abstract} Let $N/F$ be an odd degree Galois extension of number
  fields with Galois group $G$ and rings of integers $\bo_N$ and
  $\bo_F=\bo$ respectively. Let $\A$ be the unique fractional
  $\bo_N$-ideal with square equal to the inverse different of
  $N/F$. Erez has shown that $\A$ is a locally free $\bo[G]$-module if
  and only if $N/F$ is a so called weakly ramified extension. There
  have been a number of results regarding the freeness of $\A$ as a
  $\Z[G]$-module, however this question remains open. In this paper we
  prove that $\A$ is free as a $\Z[G]$-module assuming that $N/F$ is
  weakly ramified and under the hypothesis that 
for every prime $\wp$ of $\bo$ which ramifies wildly in $N/F$,
the decomposition group is abelian, the ramification
  group is cyclic and $\wp$ is unramified in $F/\Q$.

 We make crucial use of a construction due to the first named author
 which uses Dwork's exponential power series to describe self-dual
 integral normal bases 
in Lubin-Tate extensions of local fields. 
This yields a new and striking relationship between the local
norm-resolvent and Galois Gauss sum involved.
Our results generalise work of
 the second named author concerning the case of base field $\Q$. 
\end{abstract}

\section{Introduction}\label{intro}
Let $N/F$ denote an odd degree Galois extension of number fields. By
Hilbert's formula for the valuation of the different $\D$ of $N/F$,
there exists a fractional ideal $\A$ $(=\A_{N/F})$ of the ring of
integers $\bo$ $(=\bo_{F})$ of $F$ such that:
\[\A^2=\D^{-1}\enspace.\]
This ideal is known as the \textit{square root of the
  inverse different}. It is an ambiguous ideal, namely it is stable
under the action of 
the Galois group $G$ of $N/F$ and hence an $\bo[G]$-module. Erez
has shown $\A$ to be locally free if and only if $N/F$ is weakly ramified,
\textit{i.e.}, if the second ramification group of any prime ideal $\p$ of
$\bo_N$ is trivial. The study of $\A$ as an
$\bo[G]$-module has too many obstructions to be dealt with (in
particular $\bo$ may not be principal), so after
Fr\"ohlich, Taylor, \textit{et al.} (for the Galois module structure
of the ring of integers in a tame extension), we consider the structure of $\A$ as a
$\Z[G]$-module. 
 
In \cite{Erez2} Erez proves that when $N/F$ is tamely ramified, then $\A$ is always free over $\Z[G]$. 
The question of whether $\A$ is free as a $\Z[G]$-module when $N/F$ is
wildly but weakly ramified is still open. In this paper we prove the
following global result. 
\begin{ithm}\label{THM}
Let $N/F$ denote an odd degree weakly ramified Galois extension of
number fields and suppose that, for any wildly ramified prime $\p$ of
$\bo_N$, the decomposition group is abelian, the ramification
group is cyclic and the localised extension $F_\wp/\Q_p$ is unramified
--- where $\wp=\p\cap F$ and $p\Z=\p\cap\Q$. Then $\A$ is a free
$\Z[G]$-module. 
\end{ithm}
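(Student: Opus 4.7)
The plan is to attack the statement through the Fröhlich Hom-description of the locally free class group $\Cl(\Z[G])$. By Erez's theorem and the weakly ramified hypothesis, $\A$ is a locally free $\bo[G]$-module of rank one, and by restriction of scalars through $\Z\subset\bo$ it becomes a locally free $\Z[G]$-module. Since $|G|$ is odd, $\Q[G]$ has no irreducible symplectic representations, so the Eichler condition is satisfied and $\A$ is $\Z[G]$-free if and only if its class $[\A]\in\Cl(\Z[G])$ is trivial. The strategy is to compute this class by Fröhlich's representing homomorphism $\chi\mapsto\prod_\wp\N_{F_\wp/\Q_p}(\langle a_\wp\,|\,\chi\rangle)$, where $a_\wp$ runs over carefully chosen local generators of $\A_\wp$ over $\bo_\wp[G]$, and to show that this homomorphism lies in the image of $\Det(\bo[G]^\times)$ modulo unit resolvents.

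Next I would analyse the contribution of each prime $\wp$ separately. At unramified $\wp$ the local algebra is étale and any local integral normal basis generator works, giving a unit resolvent. At tamely ramified $\wp$ I would adapt Erez's analysis of $\A$ to produce a local generator whose norm-resolvent differs from the modified Galois Gauss sum $\tau^*(F_\wp/\Q_p,\chi_\wp)$ only by a local unit, in parallel with Taylor's treatment of the ring of integers. At wildly (hence weakly) ramified $\wp$, the hypotheses that the inertia group be cyclic, the decomposition group abelian and $F_\wp/\Q_p$ unramified are precisely what is required to embed the local extension $N_\p/F_\wp$ into a Lubin-Tate tower whose formal group descends to $\Z_p$. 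In this setting I would invoke the first-named author's construction based on Dwork's exponential power series to exhibit an explicit self-dual integral normal basis generator $a_\wp$ of $\A_\p$ whose norm-resolvent is, up to an $\bo_\wp$-unit resolvent, the local modified Galois Gauss sum $\tau^*(F_\wp/\Q_p,\chi_\wp)$.

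Assembling these local identities, the global homomorphism representing $[\A]$ then differs from the global Galois Gauss sum homomorphism $\chi\mapsto\tau^*(N/F,\chi)$ only by a $\Det(\bo[G]^\times)$-image, and Taylor's fixed-point theorem for $\tau^*$ forces the class to be trivial. I expect the principal obstacle to be the wild-place step: one must not only produce Pickett's Dwork-exponential generator for the model Lubin-Tate extension, but also transport it to the actual local extension $N_\p/F_\wp$ and verify equivariantly that its norm-resolvent matches $\tau^*$ character by character. The unramifiedness hypothesis on $F_\wp/\Q_p$ is essential, since it is what allows the formal group to be defined over $\Z_p$ and makes the explicit Dwork series available; the cyclic inertia hypothesis is what makes the Lubin-Tate formalism apply. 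Checking that the self-duality of the generator is preserved under descent, and controlling the unit resolvents that appear at every step, is where the bulk of the delicate work will lie.
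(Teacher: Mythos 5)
Your outline follows the same route as the paper: Fr\"ohlich's Hom-description of $\Cl(\Z[G])$, a representative built from norm-resolvents of local generators of $\A$ together with modified Galois Gauss sums, prior results of Taylor and Erez to dispose of the primes away from $p$ and the tame primes above $p$, Pickett's Dwork-exponential self-dual generators at the wild places, and Taylor's fixed-point theorem for group determinants to descend to $\Det(\Z_p[G]^\times)$. The step you single out as the principal obstacle --- transporting the model computation to the actual extension $N_\p/F_\wp$ --- is indeed where the paper spends a section, but it proceeds not by descending a formal group to $\Z_p$; rather it composes with an unramified extension and then co-inflates, and the cyclic-ramification hypothesis is used to force the totally ramified part to be one of the degree-$p$ subfields $M_i$ of $K_{p,2}$, which is exactly the setting of Pickett's explicit construction (unramifiedness of $F_\wp/\Q_p$ makes $p$ a uniformiser and puts that construction in reach).

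The one concrete gap is the missing Adams-operator twist. Because the module is the square root of the inverse different and not the ring of integers, the representative of $(\A)$ pairs the norm-resolvent not with $\tau^\star(\chi)$ but with $\tau^\star\big(\chi-\psi_2(\chi)\big)$, where $\psi_2$ is the second Adams operation; the local identity actually proved at a wild place is $\N_{K/\Q_p}(\alpha_M\mid\chi)\,\tau_K^\star\big(\chi-\psi_2(\chi)\big)=1$. The statement you propose to verify --- that the norm-resolvent of the Dwork-exponential generator matches $\tau^\star(\chi_\wp)$ up to unit resolvents --- is not the correct one and would not hold; the same twist is already present in Erez's treatment of the tame places. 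Correspondingly, the global comparison must be with $\chi\mapsto\tau^\star(\chi-\psi_2\chi)$, and one has to check (as the paper does, by showing $\tau^\star/\tau$ lies in the denominator of the Hom-description and that $\Psi_2$ preserves the relevant $\Det$-groups) that both the modification and the twist are harmless. With the twist restored, the rest of your plan is the paper's proof.
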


This result generalises \cite[th\'eor\`eme 1.2]{Vinatier_jnumb}, which
is the natural analogue in the absolute case $F=\Q$. In that case,
ramification groups at wildly ramified places are always cyclic of
prime order. In the relative case, we will see that the abelian
decomposition group assumption yields that the ramification
group is $p$-elementary abelian at a wildly ramified place above a
rational prime $p$. Therefore, our hypothesis about 
ramification groups at wildly ramified places really mimics the
situation in the absolute case. 

As in Taylor's celebrated theorem for rings of integers in tame
extensions of number fields \cite[Theorem 1]{tay1}, our result
indicates a deep connection between two kinds of invariants of the
extension $N/F$: the Galois Gauss sum, of analytic nature, that
emerges in the constant of the functional equation of the Artin
$L$-function of $F$; and the norm-resolvents attached to semi-local
normal basis generators of $\A$, that are entirely of algebraic nature
\textit{a priori}. 

Generalising the link between these objects to the relative situation
has been made possible by the exhibition by the first named author in
\cite{Pickett} of explicit normal basis generators for cyclic weakly
ramified extensions of an unramified extension of $\Q_p$. These
generators are constructed using values of Dwork's $p$-adic
exponential power series at certain units and have very nice
properties --- in particular they are self-dual with respect to the
trace form. Of course, the norm-resolvents we attach to them can no
longer be thought of as completely algebraic in nature.

Dwork originally introduced his power series in the
context of $p$-adic differential operators, when considering the zeta
function of a hypersurface \cite{Dwork}. In this paper we  
demonstrate that Dwork's power series is extremely useful when 
considering Galois module structure in extensions of both local and 
number fields. We hope that this work will lead to the further 
investigation of the connections between these two subject areas.

The core of this paper is Section
\ref{computation}, where we use the rich properties of Pickett's basis
generators to compute the product of a 
norm-resolvent with a modified twisted Galois Gauss sum in local cyclic wildly
and weakly ramified extensions. We obtain the following local result
(the objects will be defined below).
\begin{ithm}\label{local}
Let $p\not=2$ be a rational prime, $K$ an unramified finite extension of
$\Q_p$, $M$ a cyclic wildly and weakly ramified extension
of $K$ such that $p$ belongs to the norm group of $M/K$.
There exist a normal basis
generator $\alpha_M$ of the square root of the inverse different of
${M/K}$ and choices in the definitions of the norm-resolvent
$\N_{K/\Q_p}(\alpha_M\mid\,.\,)$ and of the modified Galois
Gauss sum $\tau_K^\star$ such that, for any character $\chi$ of $\Gal(M/K)$: 
$$\N_{K/\Q_p}(\alpha_M\mid\chi)\tau_K^\star(\chi-\psi_2(\chi))=1\enspace,$$
where $\psi_2$ is the second Adams operator.
\end{ithm}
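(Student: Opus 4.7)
The plan is to invoke Pickett's explicit construction from \cite{Pickett} and then compute each of the two factors on the left-hand side directly, exploiting the classical identity between values of Dwork's exponential power series and local Gauss sums. Since $p$ lies in the norm group of $M/K$, local class field theory identifies $M$ as a subextension of a Lubin-Tate tower over $K$ attached to the uniformiser $p$; combined with the weakly ramified, cyclic and odd-degree hypotheses, this places us exactly in the framework of \cite{Pickett}, which furnishes a distinguished self-dual normal basis generator $\alpha_M$ of $\A_{M/K}$, realised as a value of Dwork's series at a Lubin-Tate torsion parameter.

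Next I would unwind the norm-resolvent. Because $K/\Q_p$ is unramified, $\N_{K/\Q_p}(\alpha_M\mid\chi)$ rewrites as a product over embeddings in which every factor is a Dwork value twisted by $\chi$; the self-duality of $\alpha_M$ together with Galois-equivariance should collapse this product to an expression controlled by a single Dwork value at a Lubin-Tate parameter and a suitable norm from $M$ to $K$.

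In parallel, I would analyse the modified Galois Gauss sum $\tau_K^\star(\chi-\psi_2(\chi))$. The virtual character $\chi-\psi_2(\chi)$ is the natural combination that cancels the sign and square-root ambiguities inherent in $\tau_K$, which is precisely the reason to pass to the starred version. For a wildly and weakly ramified character of a cyclic $p$-group of odd order, standard inductive and cyclotomic reductions should express this quantity as an explicit local Gauss sum attached to the additive character chosen on $K$.

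The main obstacle --- and the heart of the argument --- is to reconcile the two expressions so that their product is $1$. This is where the ``new and striking relationship'' highlighted in the introduction intervenes: Dwork's classical identification of the value of his exponential series at a $p$-power root of unity with a local Gauss sum, transported to the Lubin-Tate setting, should provide exactly the cancellation required. The delicate point will be to keep track of the compatible choices --- of embeddings of roots of unity, of the additive character defining $\tau_K^\star$, of the Lubin-Tate formal group law used in \cite{Pickett} and of signs coming from self-duality --- so that the ``choices in the definitions'' allowed by the statement suffice. Once the dictionary between these normalisations is established, the identity should follow from Dwork's formula applied to Pickett's generator.
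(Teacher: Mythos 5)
Your setup is the right one --- Pickett's self-dual generator $\alpha_M=\bigl(1+Tr_{\Delta}(x^{1/p})\bigr)/p$ built from values of Dwork's series at Teichm\"uller lifts is exactly what the paper uses, and the norm-resolvent does collapse to a single Kummer-theoretic quantity (though the relevant norm is $N_{K(\zeta)/\Q_p(\zeta)}(x)$ of the Kummer element, which the paper shows equals $\zeta^{Tr_{K/\Q_p}(\vareps)}$, not a norm from $M$ to $K$, and the collapse comes from the resolvent picking out one $\Delta$-conjugate of $x^{1/p}$ per character, not from self-duality). But the step you yourself flag as the heart of the argument is precisely where your plan has no working mechanism. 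The ``classical identification of the value of Dwork's exponential series with a local Gauss sum'' is a statement about Gauss sums of \emph{multiplicative characters of finite fields} (the splitting of $\sum_t\chi(t)\zeta^{t}$, Gross--Koblitz territory); the quantity $\tau_K^\star(\chi-\chi^2)$ here is a Tate local constant of a \emph{wildly ramified character of a local Galois group}, with conductor $p^2\bo_K$, and no off-the-shelf Dwork identity evaluates it; nor is there any ``standard inductive or cyclotomic reduction'' available, since the whole difficulty is genuinely wild.

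What actually closes the gap in the paper is a chain absent from your proposal: (i) Tate's formula for local constants, which expresses $\tau_K^\star(\chi-\chi^2)$ through a unit $v_\chi\in\bo_K^\times$ characterised by $\chi(1+up)^{-1}=\zeta^{Tr(uv_\chi)}$; (ii) the $p$-th Hilbert symbol identity $(1+up,x)_{p,K'}=\chi(1+up)^{-1}$, which is the precise bridge between the Kummer generator $x$ (built from Dwork values) and the character $\chi$ (defined through the Artin map); and (iii) the resulting equality $Tr(v_\chi)=Tr(\vareps)$, which makes the exponent of $\xi$ in the Gauss sum cancel the exponent $\xi^{j(2-j^{1-p})Tr(\vareps)}$ coming from the norm-resolvent, followed by one further Hilbert-symbol evaluation of $(j,x)_{p,K'}$ (via the transfer map and the Lubin--Tate description of $\theta_{\Q_p}$ acting on $\xi$) to kill the leftover factor $\chi(j^{-1})$. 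Without a substitute for (ii) and (iii), ``keeping track of normalisations'' cannot reconcile the two sides: the issue is not bookkeeping but proving a genuinely new identity relating the Kummer datum $\vareps$ of $M$ to the conductor datum $v_\chi$ of $\chi$. As a smaller point, the twist by $\psi_2$ is not what removes ambiguities from $\tau_K$ (that is the role of the non-ramified characteristic and of $c_{K,2}$ in the starred modification); it comes from Erez's representative of the class of $\A$.
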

Before proving this result in Section \ref{computation}, we introduce
the technical tools for our study in Section \ref{strategy}. Then in
Section \ref{preliminary} we give some preliminary results and explain
how to reduce the proof of Theorem \ref{THM} to that of Theorem
\ref{local}. 
\smallskip

We hope to deal with the general relative abelian case in a future
publication, but we have no explicit description of a normal basis
generator which lends itself so well to calculations of the type used
in this paper --- see \cite[Remark 13(2)]{Pickett} and
\cite[Introduction]{Pickett2}.  
We are therefore not able to generalise the explicit
computations of Section \ref{computation} at this stage. 

Throughout this paper, $N/F$ is an odd degree weakly ramified extension
of number fields with Galois group $G$. 
\begin{aknowledgments}
The authors would like to express their deep gratitude to
Philippe Cassou-Nogu\`es and Martin J. Taylor, for many 
useful comments and suggestions; specifically Philippe
Cassou-Nogu\`es  pointed out an embarrassing misuse of Fr\"ohlich's 
Hom-description in the first version of this paper, and helped solve
the problem. They also thank R\'egis Blache for an enlightening
discussion about $p$-adic analysis.
\end{aknowledgments}
\section{Strategy}\label{strategy}
In this section, we first explain briefly how Fr\"ohlich's
Hom-description translates the problem of showing that $\A$ is a free
$\Z[G]$-module into the study of an equivariant morphism on the group
of virtual characters of $G$, with idelic values. For each rational
prime $p$, the local components above $p$ of this morphism decompose
as a product of factors indexed by the prime ideals of $\bo$. We
recall from the literature the properties we need about these factors,
except for the $\wp$-factors of the $p$-component when $\wp\mid p$ is
wildly ramified in $N/F$; this will be dealt with in Sections
\ref{preliminary} and \ref{computation}.

We first fix some notations for the paper. 
\begin{notation}\label{notation}
We let $\Q^c$ denote the algebraic closure in the field of complex
numbers of the field $\Q$ of rational numbers; for any rational prime
$p$ we fix an algebraic closure $\Q_p^c$ of the field of $p$-adic
numbers $\Q_p$. Any number field (resp. finite extension
of $\Q_p$) $L$ we consider is assumed to be contained in $\Q^c$
(resp. $\Q_p^c$), and we set $\Omega_L=\Gal(\Q^c/L)$
(resp. $\Omega_L=\Gal(\Q_p^c/L)$); we let $L^{ab}$ be the maximal
abelian extension of $L$ in $\Q^{c}$ (resp. $\Q_p^{c}$) and
denote $\Gal(L^{ab}/L)$ as $\Omega_L^{ab}$. 

When $L$ is a number field, we denote by $\bo_L$ its ring of integers
and, if $\p$ is a prime ideal of $\bo_L$, by $L_\p$ the completion
(also called the ``localisation'') of $L$ at $\p$. When $L$ is a finite
extension of $\Q_p$, we denote by $\bo_L$ the valuation ring of $L$ and by $\theta_L$
the Artin reciprocity map $L^\times\rightarrow\Omega_L^{ab}$.

\end{notation}

\subsection{The class group}\label{sub:hom-des}
To prove Theorem \ref{THM} we use the classic strategy developed by
Fr\"ohlich. We associate to the $\Z[G]$-module $\A$, its class $(\A)$
in the class group of locally free $\Z[G]$-modules $\Cl(\Z[G])$.
Since the order of $G$ is odd, the triviality of the class $(\A)$ is
equivalent to $\A$ being free as a $\Z[G]$-module, which is our
goal. Fr\"ohlich's Hom-description of
$\Cl(\Z[G])$ reads:
$$\Cl(\Z[G])\cong\frac{\dsp\Hom_{\Omega_\Q}(R_G,J(E))}{\dsp\Hom_{\Omega_\Q}(R_G,E^{\times\!})\Det({\mathcal U}(\Z[G]))}\enspace.$$
Here $R_G$ is the additive group of virtual characters of $G$ with
values in $\Q^c$, $E$ is a ``big enough'' number field (in particular
$E$ is Galois over $\Q$, contains $N$ and the values of the
elements of $R_G$) and $J(E)$ is its id\`ele group. The
homomorphisms in $\Hom_{\Omega_\Q}(R_G,J(E))$ are those which commute
with the natural actions of $\Omega_\Q$ on $R_G$ and $J(E)$. The group
$\Hom_{\Omega_\Q}(R_G,E^\times)$ embeds in the former one through the
diagonal embedding of $E^\times$ in $J(E)$, and ${\mathcal U}(\Z[G])={\mathbb 
  R}[G]^{\times\!}\times\prod_l\Z_l[G]^{\times\!}$, with $l$ running over
all rational primes. We now briefly define the $\Det$ morphism, as
well as local and semi-local resolvents and norm-resolvents. For a
more complete account of Fr\"ohlich's Hom-description, see
\cite{Frohlich-Alg_numb}. 

\subsection{Determinants and resolvents}\label{defs}
Let $B/A$ be a finite Galois extension of number fields, let $\p$ be a
prime ideal of $\bo_B$ and set $\wp=\p\cap A$. In the following, the
symbols $K$, $L$, $H$, $\Q_*$ and $R$ may have two different meanings
corresponding respectively to the semi-local and local situations:
$$
  \vbox{\halign{%
      \strut\vrule\ $#$ \vrule &&\ $#$ \vrule\cr
      \noalign{\hrule}
         K & L & H & \Q_* & R \cr
      \noalign{\hrule height1pt}
         A & B\otimes_A A_\wp & \Gal(B/A) & \Q & R_H \cr
      \noalign{\hrule}
         A_\wp & B_\p & \Gal(B_\p/A_\wp) & \Q_p & R_{H,p} \cr
      \noalign{\hrule}}}
$$
In the table, we have denoted by $R_{H,p}$ the group of virtual
  characters of $H$ with values in $\Q_p^c$.
Let $\chi$ be the character of an irreducible matrix representation
$\Theta$ of $H$ and $x=\sum_{h\in H}x_hh\in L[H]$, then
$$\Det_\chi(x)=det\Big(\sum_{h\in H}x_h\Theta(h)\Big)\enspace,$$
where $det$ stands for the matrix determinant. Extending this formula
by linearity to any $\chi\in R$ yields the morphism
$$\Det:L[H]^\times\longrightarrow\Hom_{\Omega_K}\big(R,(\Q_*^c)^\times\big)\enspace.$$
The restriction of $\Det_\chi$ to $H$ yields an
abelian character of $H$ that we denote by $\Dab_\chi$.
It can be extended to $\Omega_K$ by letting 
$\Dab_\chi(\omega)=\Dab_\chi(\omega|_L)$ for any $\omega\in\Omega_K$. 

In order to find a representative character function of the image of
$(\A)$ under the Hom-description of $\Cl(\Z[G])$, one needs to
consider resolvents and norm-resolvents. 
\begin{definition}\label{resnormres}
Let $\alpha\in L$ and $\chi\in R$.
The resolvent, and respectively the norm-resolvent, of
$\alpha$ at $\chi$ with respect to $L/K$ are defined as  
$$(\alpha\,|\,\chi)=(\alpha\,|\,\chi)_{{H}}=\Det_\chi\left(\sum_{h\in H}\alpha^hh^{-1}\right)\ ,\quad \mathcal{N}_{K/\Q_*}(\alpha\,|\,\chi)=\prod_{\omega\in\Omega}(\alpha\,|\,\chi^{\omega^{-1}})^\omega$$
where the product is over a (right) transversal $\Omega$ of $\Omega_K$
in $\Omega_{\Q_*}$.  
\end{definition}
Notice that the norm-resolvent depends on the choice of the right
transversal $\Omega$. In view of \cite[Prop.
  I.4.4(ii)]{Frohlich-Alg_numb}, changing $\Omega$ multiplies
the norm-resolvent by $\Dab_\chi(h)$ for some $h\in H$, namely by an
element in the denominator of the Hom-description. It follows that
when using norm-resolvents to describe a representative function for
$(\A)$, we may choose $\Omega$ freely. When $K/\Q_p$ is Galois (in
the local context), restriction of $\Q_p$-automorphisms to $K$ maps any 
such $\Omega$ onto $\Gal(K/\Q_p)$. 
When $H$ is abelian, the formulas simplify to:
$$(\alpha\,|\,\chi)=\sum_{h\in{H}}\alpha^h\chi(h^{-1})\ ,\quad \mathcal{N}_{K/\Q_*}(\alpha\,|\,\chi)=\prod_{\omega\in\Omega}\left(\sum_{h\in{H}}\alpha^{h\omega}\chi(h^{-1})\right)\enspace.$$
\subsection{A representative for $(\A)$}\label{representative_subsection}
We now describe a representative $f$ of $(\A)$ in
$\Hom_{\Omega_\Q}(R_G,J(E))$. Such a representative is not unique
since it can be modified by multiplication by any element in the
denominator of the Hom-description. Indeed we construct $f$ by
slightly modifying Erez's representative $v_{N/F}$ \cite[Theorem 3.6]{Erez2}, in
order to enable more precise computations at wildly ramified places
--- the goal being to show that $f$ itself lies in the denominator of the
Hom-description.  

We define the representative morphism $f$ of $(\A)$ by giving,
for each rational prime $p$, its semi-local component $f_p$ taking
values in $J_p(E)=\prod_{\PP\mid p}E_\PP^\times$, where the product is
over the prime ideals of $\bo_E$ above $p$. This group embeds in
$J(E)$ as the subgroup consisting of the id\`eles $(y_{\PP})_{\PP}\in
J(E)$ such that $y_{\PP}=1$ if $\PP$ is a prime ideal of $\bo_E$ that
does not divide $p$. It decomposes into the cartesian product
$J_p(E)=\prod_{\wp|p}J_\wp(E)$, where the product is
over the prime ideals $\wp$ of $\bo$ above $p$, and
$J_\wp(E)=\prod_{\PP\mid\wp}E_\PP^\times$.  
Note that, with similar definitions at a lower level,
$J_\wp(F)=F_\wp^\times$ diagonally embeds into $J_\wp(E)$ and
$J_\wp(N)$ embeds into $J_\wp(E)$ by the map
$(x_\p)_{\p|\wp}\mapsto(y_\PP)_{\PP|\wp}$, such that $y_\PP=x_\p$ if $\PP|\p$.

Furthermore $J_\wp(E)$ is isomorphic to $(E\otimes_F F_\wp)^\times$
via the isomorphism 
$$\I_\wp=\prod_{\iota}(\iota\otimes 1):(E\otimes_F F_\wp)^\times\isom
J_\wp(E)
\enspace,$$
built on the various embeddings $\iota$ of $E$ in $\Q_p^c$ that fix
$\wp$. These embeddings are in one-to-one correspondence with the
prime ideals of $\bo_E$ above $\wp$. We may not always
distinguish between $J_\wp(E)$ and $(E\otimes_F F_\wp)^\times$ in the
following.

First consider the case where $p$ is a rational prime that does not divide
the order of $G$. Under this assumption, $\Z_p[G]$ is a maximal order
in $\Q_p[G]$, which by \cite[Prop. I.2.2]{Frohlich-Alg_numb}
implies    
$$\Hom_{\Omega_\Q}(R_G,\U_p(E))=\Det(\Z_p[G]^\times)\enspace,$$
where $\U_p(E)=\prod_{\PP\mid p}\bo_{E_\PP}^\times$. On the other
hand, Erez has shown that his representative $v_{N/F}$ takes values in
$\U(E)=\prod_\PP\bo_{E_\PP}^\times$ \cite[Theorem $2^\prime$]{Erez2}. It
follows that $(v_{N/F})_p$ belongs to the $p$-component of the
denominator of the Hom-description, hence we may set $f_p=1$. 
Similar arguments show that we may also set $f_\infty=1$, where
$\infty$ stands for the archimedean place of $\Q$.

From now on we suppose $p$ is a rational prime dividing the order of
$G$. The $p$-component $f_p$ of our representative $f$ is
essentially made of two ingredients: the global Galois Gauss sum of
$F$ and norm-resolvents associated to semi-local generators of $\A$.
\subsubsection{Norm-resolvents.}
We begin with the latter ingredient. Since $N/F$ is weakly ramified,
we know by Erez's criterion 
\cite[Theorem 1]{Erez2} that the square root of the inverse
different $\A$ is locally free. Specifically, for each prime ideal
$\wp$ of $\bo$ above $p$, there exists $\beta_\wp\in N\otimes_{F}{F_\wp}$ 
such that $\A\otimes_{\bo}\bo_{F_\wp}=\bo_{F_\wp}[G]{\beta_\wp}$. The
semi-local resolvent $(\beta_\wp\,|\,\chi)$, for $\chi\in R_G$, takes
values in $(E\otimes_F F_\wp)^\times$, identified with $J_\wp(E)$
through isomorphism $\I_\wp$, then embedded in $J(E)$. The
norm-resolvent is then obtained as 
$$\N_{F/\Q}(\beta_\wp\,|\,\chi)=\prod_{\omega\in\Omega}(\beta_\wp\,|\,\chi^{\omega^{-1}})^\omega\enspace,$$
where the product is over a (right) transversal $\Omega$ of $\Omega_F$
in $\Omega_{\Q}$. The action of $\Omega$ on $J(E)$ permutes the
semi-local subgroups $J_\wp(E)$ corresponding to prime ideals $\wp$ of
$\bo$ above $p$, hence the norm-resolvent
$\N_{F/\Q}(\beta_\wp\,|\,\chi)$ takes values in $J_p(E)$; if $\QQ$
denotes any prime ideal of $\bo_E$ above $p$,
we denote by $\N_{F/\Q}(\beta_\wp\,|\,\chi)_\QQ$ its component in $E_\QQ^\times$.
Accordingly, we denote by $\beta_p$ the id\`ele in $J_p(N)$ whose
components in the subgroups $J_\wp(N)$, where $\wp$ is above $p$, are the
$\beta_\wp$ introduced above. The norm-resolvent
$\N_{F/\Q}(\beta_p\,|\,\chi)$ belongs to $J_p(E)=\prod_{\QQ\mid
  p}E_\QQ^\times$, with $\QQ$-component:
$$\N_{F/\Q}(\beta_p\,|\,\chi)_\QQ=\prod_{\wp\mid
  p}\N_{F/\Q}(\beta_\wp\,|\,\chi)_\QQ\enspace.$$
\subsubsection{Galois Gauss sums.}\label{ggs}
We now turn to the global Galois Gauss sum, which is a product of
local ones. For each prime ideal $\ell$ of $\bo$, we fix a prime ideal
$\LL$ of $E$ above $\ell$ and we set $\pl=\LL\cap N$,
$l\Z=\LL\cap\Q$. Recall the one-to-one correspondence between prime
ideals of $\bo_E$ above $\ell$ and embeddings of $E$ into $\Q_l^c$
that fix $\ell$, and let $\iota_{\LL/\ell}$ denote the embedding associated to
$\LL$. It induces an isomorphism between the Galois group of the local
extension $N_\pl/F_\ell$ and the decomposition group $G(\ell)$ of
$\LL/\ell$, sending $\gamma\in\Gal(N_\pl/F_\ell)$ to 
$\iota_{\LL/\ell}\,\gamma\,\iota_{\LL/\ell}^{-1}\in\Gal(N/F)$ (the
action on elements is written exponentially, thus to the right, see
\cite[III (2.7)]{Frohlich-Alg_numb}). We may thus identify $G(\ell)$
and $\Gal(N_\pl/F_\ell)$ in the following.  

In Subsection \ref{gauss} we define the local Galois Gauss sum
$\tau_{F_\ell}(\chi)$ when $\chi\in R_{G(\ell)}$ is abelian and at
most weakly ramified  --- for a general definition of the Galois Gauss
sum see for instance \cite[I.5]{Frohlich-Alg_numb}. We also recall
how this Galois Gauss sum is modified at (at most) tamely ramified places,  
and present an analogous modification at wildly and weakly ramified places. We  
denote in both cases the resulting character function by
$\tau_{F_\ell}^\star$. Following Erez \cite[\S3]{Erez2}, we finally
twist our modified Galois Gauss sum using the action of the second
Adams operator $\psi_2$ and get: 
$$T_\ell^\star(\chi)=\tau_{F_\ell}^\star\big(\chi-\psi_2(\chi)\big)\enspace.$$
Set $\wt T_\ell^\star=\Ind_{G(\ell)}^GT_\ell^\star$, namely, by definition
of induction on character functions: 
$$\wt
T_\ell^\star(\chi)=T_\ell^\star(\chi_\ell)=\tau_{F_\ell}^\star\big(\chi_\ell-\psi_2(\chi_\ell)\big)\enspace,$$
where for $\chi\in R_G$, we let $\chi_\ell$ denote the restriction of
$\chi$ to $G(\ell)$. 

We now have for each prime ideal $\ell$ of $\bo$ a function
$\wt T_\ell^\star$ on virtual characters of $G$. We shall see in
Subsection \ref{subsub:twist} that it is in fact almost always
trivial: Equation (\ref{unram}) shows that $\wt T_\ell^\star=1$
whenever $\ell$ is unramified in $N/F$.
Let $S_T$ and $S_W$ be the sets of prime ideals of $\bo$ that are
respectively tamely and wildly ramified in $N/F$; their union
$S$ contains all the prime ideals $\ell$ of $\bo$ such
that $\wt T_\ell^\star$ is non trivial, and we define the global
twisted modified Galois Gauss sum associated to $F$ as:
$$T^\star=\prod_{\ell\in S}\wt T_\ell^\star\quad\in\ \Hom(R_G,E^\times)\enspace.$$
It takes values in $E^\times$, which diagonally
embeds into $J(E)$, henceforth into each $J_p(E)$.
\subsubsection{All together.}
We will prove the following result in Subsection \ref{subsub:twist}.
\begin{proposition}\label{representative}
For any rational prime $p$ and for $\chi\in R_G$ set:
$$f_p(\chi)=T^\star(\chi)\N_{F/\Q}(\beta_p\,|\,\chi)
=T^\star(\chi)\prod_{\wp\mid p}\N_{F/\Q}(\beta_\wp\,|\,\chi)$$
if $p$ divides the order of $G$, $f_p(\chi)=1$ otherwise; furthermore
set $f_\infty=1$. Then $f=(f_p)_p$ is a representative of $(\A)$ in
$\Hom_{\Omega_\Q}(R_G,J(E))$. 
\end{proposition}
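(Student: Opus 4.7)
The plan is to compare $f$ with Erez's representative $v_{N/F}$ from \cite[Theorem 3.6]{Erez2} of $(\A)$ and show that the two morphisms differ by an element in the denominator $\Hom_{\Omega_\Q}(R_G,E^\times)\Det(\U(\Z[G]))$ of the Hom-description. Since $v_{N/F}$ is already known to represent $(\A)$, this is enough to conclude.

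At primes $p\nmid|G|$ and at the archimedean place, the text has already recalled that $(v_{N/F})_p$ takes values in $\U_p(E)=\prod_{\PP\mid p}\bo_{E_\PP}^\times$ by Erez's integrality result \cite[Theorem $2^\prime$]{Erez2}, and since $\Z_p[G]$ is a maximal order in $\Q_p[G]$ we have $\Hom_{\Omega_\Q}(R_G,\U_p(E))=\Det(\Z_p[G]^\times)$ by \cite[Prop.~I.2.2]{Frohlich-Alg_numb}. Thus $(v_{N/F})_p$ lies in the local $p$-component of the denominator and can be replaced by $f_p=1$ without changing the represented class; the same applies at $\infty$.

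For $p\mid|G|$, Erez's local representative has essentially the form $T(\chi)\prod_{\wp\mid p}\N_{F/\Q}(\beta_\wp\mid\chi)$, where $T(\chi)=\prod_{\ell\in S}\tau_{F_\ell}(\chi_\ell-\psi_2(\chi_\ell))$ is the unmodified global twisted Galois Gauss sum, diagonally embedded into $J_p(E)$. Our $f_p$ merely replaces $T$ by its starred variant $T^\star$, so the ratio $f_p/(v_{N/F})_p$ equals the global factor $T^\star/T$, independent of $p$, which factors as a product over ramified places $\ell\in S$ of the local modification factors $\tau^\star_{F_\ell}/\tau_{F_\ell}$. Provided these local modifications take values in $E^\times$ (diagonally embedded) and respect the $\Omega_\Q$-action on $R_G$ and $E^\times$, the resulting global ratio defines an element of $\Hom_{\Omega_\Q}(R_G,E^\times)$, precisely the first factor of the denominator. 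This exhibits $f$ and $v_{N/F}$ as representatives of the same class.

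The main technical point is verifying $\Omega_\Q$-equivariance and $E^\times$-valuedness of the modification $\tau^\star_{F_\ell}$ at wildly and weakly ramified places, where the modification is newly introduced in this paper; at tame places the analogous property is classical and follows the Cassou-Nogu\`es line. This verification, together with showing that $\wt T_\ell^\star=1$ at unramified places (so that $T^\star$ is a finite product indexed by $S$), will be carried out in Subsection \ref{subsub:twist}; once it is in place, Proposition \ref{representative} follows from the comparison argument above.
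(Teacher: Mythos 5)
Your overall strategy is exactly the paper's: compare $f$ with Erez's representative $v_{N/F}$ and show the quotient lies in the denominator of the Hom-description, with the primes $p\nmid|G|$ and the archimedean place disposed of by the maximal-order argument. Two points in the wild part are glossed over, one of them substantive. The minor one: you must choose the \emph{same} semi-local generators $\beta_\wp$ in $f$ and in $v_{N/F}$ (a change of generator contributes another denominator element, by the Corollary to Prop.\ I.4.2 of Fr\"ohlich); the paper says this explicitly.

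The substantive one is the second Adams operator. Both $T^\star_\ell$ and its unmodified analogue are already twisted, i.e.\ $T^\star_\ell(\chi)=\tau^\star_{F_\ell}(\chi_\ell-\psi_2(\chi_\ell))$, so the quotient at $\ell\in S$ is not the bare modification factor but
$$\frac{\tau_{F_\ell}^\star/\tau_{F_\ell}}{\Psi_2\big(\tau_{F_\ell}^\star/\tau_{F_\ell}\big)}\,,\qquad \Psi_2(v)(\chi)=v(\psi_2(\chi))\,,$$
and the Adams operator never appears in your argument. Lemma \ref{modifOK} places $\tau_{F_\ell}^\star/\tau_{F_\ell}$ in $\Hom_{\Omega_\Q}(R_{G(\ell)},E^\times)\Det\big(G(\ell)\big)$, the second factor coming from $\chi\mapsto\Dab_\chi(c_{F_\ell,i})=\Dab_\chi(h_{F_\ell,i})$ with $h_{F_\ell,i}\in G(\ell)$; to conclude one must still check that $\Psi_2$ preserves both pieces. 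For the $\Det\big(G(\ell)\big)$ piece the paper invokes the Cassou-Nogu\`es--Taylor result that $\Psi_2$ preserves $\Det(\Z_l[G(\ell)]^\times)$ for every $l$, and for the equivariant piece it uses that $\psi_2$ commutes with the $\Omega_\Q$-action on characters; only then does induction of character functions (Theorem 12 of Fr\"ohlich) put the whole quotient in $\Hom_{\Omega_\Q}(R_G,E^\times)\Det(\U(\Z[G]))$. Your sufficient condition (``$E^\times$-valued and $\Omega_\Q$-equivariant'') is a reasonable reduction, but as written it is asserted for ``the modification $\tau^\star_{F_\ell}$'' rather than for the twisted ratio, and the verification you defer would have to include the $\Psi_2$-stability just described; that is the missing ingredient relative to the paper's proof.
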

\subsection{Localising and cutting into pieces}\label{cut}
We fix a rational prime $p$ dividing the order of $G$. The
$\Omega_\Q$-equivariant component $f_p$ of our representative takes
semi-local values. We first transform it into a character function
with local values, that we then cut into factors that will be dealt
with separately.

We use the localisation procedure described in \cite[II.2 \&
  III.2]{Frohlich-Alg_numb}. Let $\QQ$ be a prime ideal of $\bo_E$
above $p$. The associated embedding $\iota=\iota_{\QQ/p}$ embeds $E$
into $E_\QQ\subset\Q_p^c$. It gives rise to a homomorphism
$E\otimes_\Q\Q_p\rightarrow E_\QQ$, again denoted by $\iota$, and to
an isomorphism $\chi\mapsto\chi^\iota$, of $R_G$ onto $R_{G,p}$, 
the ring of virtual characters of $G$ with values in $\Q_p^c$. We know
by \cite[Lemma II.2.1]{Frohlich-Alg_numb} that it yields an
isomorphism: 
\begin{equation}\label{iota}
\iota^*:
\Hom_{\Omega_\Q}(R_G,J_p(E))\isom\Hom_{\Omega_{\Q_p}}(R_{G,p},E_\QQ^\times)
\enspace,
\end{equation}
defined by $\iota^*(v)(\theta)=v(\theta^{\iota^{-1}})^\iota$, and such
that
\begin{equation}\label{iota-det}
\iota^*\big(\Det(\Z_p[G]^\times)\big)=\Det(\Z_p[G]^\times)\enspace,
\end{equation}
where the left hand side Det group takes semi-local values (on
characters with values in $\Q^c$) whereas the right hand side takes
local values (on characters with values in $\Q_p^c$). However the
ambiguity of the notation should not be a problem thanks to this
isomorphism. 

We now compute $\iota^*(f_p)$. Let $\theta\in R_{G,p}$ and set
$\chi=\theta^{\iota^{-1}}$, then
$$\iota^*(f_p)(\theta)=
\prod_{\ell\in S}T_{\ell}^\star(\chi_{\ell})^\iota\
\prod_{\wp\mid p}\N_{F/\Q}(\beta_\wp\,|\,\chi)^\iota
\enspace.$$
For each $\ell\in S$ we let
$\T_\ell^\star\in\Hom(R_{G(\ell),p},E_\QQ^\times)$ be such that, for
$\phi\in R_{G(\ell),p}$:
$$\T_\ell^\star(\phi)=T_\ell^\star\big(\phi^{\iota^{-1}}\big)^\iota\enspace,$$
then
$T_{\ell}^\star(\chi_{\ell})^\iota=\T_{\ell}^\star(\theta_{\ell})$, since
$\iota:R_G\hookrightarrow R_{G,p}$ commutes with restriction of
characters to $G(\ell)$. 

Recall from
Subsection \ref{ggs} that we have fixed a prime ideal $\PP$ of $\bo_E$
above each prime ideal $\wp$ of $\bo$, and set $\p=\PP\cap N$. We have
not specified the 
semi-local generator $\beta_\wp$ yet, but in view of \cite[Theorem
  1]{Erez2} and \cite[Prop. III.2.1]{Frohlich-Alg_numb} --- that
Erez has already checked to apply to our situation --- we may choose a
local generator $\alpha_\wp\in N_\p^\times$ such that
$\A_{N_\p/F_\wp}=\bo_{F_\wp}[G(\wp)]\alpha_\wp$, and set:  
$$(\beta_\wp)_\p=\alpha_\wp\ ,\quad (\beta_\wp)_{\p'}=0\enspace,$$
for any prime ideal $\p'$ of $\bo_N$ above $\wp$ and distinct from
$\p$. It then follows from \cite[Theorem 19]{Frohlich-Alg_numb} (see
also \cite[Prop. 5.1]{Erez2}) that, for $\chi\in R_G$:
$$
\normres{F/\Q}{\beta_\wp}{\chi}_\QQ=
\normres{F_\wp/\Q_p}{\alpha_\wp}{\chi_\wp^{\iota}}\,
\det\nolimits_{\chi^{\iota}}(\gamma_\QQ)\enspace,
$$
for some $\gamma_\QQ\in G$ independent of $\chi$. Note that the
$\QQ$-component $B_\QQ$ of our semi-local norm-resolvent
$B=\normres{F/\Q}{\beta_\wp}{\chi}$ equals $B^\iota$ (strictly speaking we
should write $B^{\I_\wp\iota}$ but we have identified $J_\wp(E)$ with
$E\otimes_FF_\wp$, hence omitted $\I_\wp$). Define 
$\R_\wp\in\Hom(R_{G(\wp),p},E_\QQ^\times)$ by  
$$\R_\wp(\phi)=\normres{F_\wp/\Q_p}{\alpha_\wp}{\phi}$$
for $\phi\in R_{G(\wp),p}$. Reordering the factors in $\iota^*(f_p)$
we get: 
$$\iota^*(f_p)(\theta)=
\prod_{\wp'\nmid p}\Ind_{G(\wp')}^G\big(\T_{\wp'}^\star\big)(\theta)\,
\prod_{\wp\mid p}\left(
\Ind_{G(\wp)}^G\big(\R_\wp\T_{\wp}^\star\big)(\theta)\,
\det\nolimits_{\theta}(\gamma_\QQ)
\right)
\enspace.$$  

Most of the factors of $\iota^*(f_p)$ can be dealt with using former
results, already gathered in \cite[\S 4.2]{Vinatier_jnumb} in the
absolute situation. 
\begin{lemma}\label{3outof4}
Let $E_\QQ^0$ and $E_\QQ^1$ denote the maximal subextensions of $E_\QQ$ over
$\Q_p$ which are respectively unramified and tamely ramified.
\begin{enumerate}[(i)]
\item Suppose $\wp'\nmid p$, then
  $\T_{\wp'}^\star\in\Det(\bo_{E_\QQ^0}[G(\wp')]^\times)$;  
\item suppose $\wp\mid p$ and $\wp\notin S_W$, then
  $\R_\wp\T_\wp^\star\in\Det(\bo_{E_\QQ^1}[G(\wp)]^\times)$.  
\end{enumerate}
\end{lemma}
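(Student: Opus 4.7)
The plan is to reduce both parts to the absolute case $F=\Q$, already established in \cite[\S 4.2]{Vinatier_jnumb}, by applying Taylor's theorem on the Galois module structure of rings of integers in tame extensions at the level of the local extensions $N_\p/F_\wp$. The arguments in the absolute case are essentially local and character-theoretic, so they transpose to our relative setting with the obvious modifications: norms from $F_\wp$ to $\Q_p$ replace restrictions from $\Q_p$ to $\Q_p$, and $F_\wp$-coefficients replace $\Q_p$-coefficients in the ambient group ring.

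For part (i), with $\wp'\nmid p$, I would first observe that if $\wp'$ is unramified in $N/F$ then $\wt T^\star_{\wp'}=1$ by Equation (\ref{unram}), so we may assume $\wp'\in S_T$. At a tame prime $\wp'$ of residue characteristic different from $p$, the local Galois Gauss sum $\tau_{F_{\wp'}}$ takes values in $\bo_{E_\QQ^0}^\times$, because the Gauss sums at $\wp'$ involve roots of unity of order coprime to $p$ and therefore lie in the maximal unramified subfield at $\QQ$. The Adams twist $\chi\mapsto\chi-\psi_2(\chi)$ produces character functions of virtual degree zero that satisfy precisely the congruences needed to apply Fr\"ohlich's determinantal characterisation \cite[II.5]{Frohlich-Alg_numb}; this expresses $\T^\star_{\wp'}$ as $\Det(u)$ for some $u\in\bo_{E_\QQ^0}[G(\wp')]^\times$.

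For part (ii), with $\wp\mid p$ tamely ramified in $N/F$, the local extension $N_\p/F_\wp$ is tame. Taylor's theorem, applied to $N_\p/F_\wp$, asserts that for a normal integral basis generator $b_\wp$ of $\bo_{N_\p}$ as an $\bo_{F_\wp}[G(\wp)]$-module, the product $\N_{F_\wp/\Q_p}(b_\wp\mid\,\cdot\,)\,\tau_{F_\wp}(\,\cdot\,)$ lies in $\Det(\bo_{E_\QQ^1}[G(\wp)]^\times)$. To reach our object $\R_\wp\T^\star_\wp$, one expresses the generator $\alpha_\wp$ of $\A_{N_\p/F_\wp}$ as $b_\wp$ multiplied by a square-root-of-different factor available in the tame case from Erez \cite[\S 3]{Erez2}, and verifies that the combination of this factor with the Adams twist precisely converts $\tau_{F_\wp}(\chi)$ into $\tau^\star_{F_\wp}(\chi-\psi_2(\chi))$.

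The main obstacle is the verification of this last identity: that the square-root-of-different correction on the generator, combined with the Adams operator twist on the character, is precisely what is needed to turn the Galois Gauss sum into the Adams-twisted modified Galois Gauss sum of the present paper. This is essentially an elaboration of Erez's original computation for tame extensions, paralleled in \cite[\S 4.2]{Vinatier_jnumb} in the absolute case. The adaptation to the relative setting requires tracking how a change of base from $\Q_p$ to $F_\wp$ propagates through the calculation, and verifying that the $\star$-modification of the Gauss sum (as defined below in Subsection \ref{gauss}) is compatible with this tame calculation.
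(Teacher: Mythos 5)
Your part (ii) is essentially the paper's argument: the paper simply notes that for $\wp\mid p$, $\wp\notin S_W$ the modified Gauss sum $\tau_{F_\wp}^\star$ is the usual $\tau_{F_\wp}^*$ and cites Lemme 4.3 of \cite{Vinatier_jnumb} (itself a packaging of Taylor's theorem plus Erez's tame square-root-of-the-different computation), which is exactly the reduction you sketch, so no objection there beyond the fact that you defer the key identity rather than verify it --- as does the paper.

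Part (i), however, has a genuine gap. After disposing of the unramified primes via Equation (\ref{unram}) you write ``so we may assume $\wp'\in S_T$,'' but $S=S_T\cup S_W$ and nothing prevents a prime $\wp'$ with $\wp'\nmid p$ from being \emph{wildly} ramified in $N/F$ (wild ramification at $\wp'$ is measured against its own residue characteristic $l\neq p$, not against $p$). These factors $\T_{\wp'}^\star$ for $\wp'\in S_W$, $\wp'\nmid p$ genuinely occur in $\iota^*(f_p)$, since the global sum $T^\star$ runs over all of $S$ and is diagonally embedded into $J_p(E)$, and they are not covered by Taylor's Theorem 3 of \cite{tay1}, which is a statement about \emph{tame} Galois Gauss sums. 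The paper handles this case separately: it uses Lemma \ref{modifOK} to replace $T_{\wp'}^\star$ by its unmodified analogue $\chi\mapsto\tau_{F_{\wp'}}(\chi-\psi_2(\chi))$ at the cost of factors lying in the denominator of the Hom-description, and then invokes the relative extension of Lemme 4.7 of \cite{Vinatier_jnumb}, which controls the twisted Gauss sum at wildly and weakly ramified primes of residue characteristic prime to $p$ (the only input needed from ramification theory being $G_0=G_1$, cf.\ Remark \ref{H0=H1}). A secondary inaccuracy: your justification that the tame Gauss sum at $\wp'$ lies in $\bo_{E_\QQ^0}^\times$ because it ``involves roots of unity of order coprime to $p$'' is not right --- the character values $\chi(x)$ can be $p$-power roots of unity since $p$ divides $|G|$; the unramified-characterisation statement is the deep content of Taylor's theorem and cannot be read off from the shape of the Gauss sum.
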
 
\begin{proof}
Note first that since $\wt T_{\wp'}^\star=1$
when $\wp'\notin S$, the same holds for $\T_{\wp'}^\star$. If
$\wp'\nmid p$ and $\wp'\in S_T$, our modified Galois Gauss sum 
$\tau_{F_{\wp'}}^\star$ coincides with $\tau_{F_{\wp'}}^*$, the usual
modified Galois Gauss sum, so we may use \cite[Theorem 3]{tay1}
together with \cite[(2-7)]{cnt-adams} (for the twist of the Galois
Gauss sum by the Adams operator). Suppose now that $\wp'\in S_W$. We
shall prove in Lemma \ref{modifOK} below --- see also Subsection
\ref{subsub:twist} --- that $T_{\wp'}^\star$ can
then be replaced by its non modified analogue $T_{\wp'}:\chi\in
R_{G(\wp')}\mapsto\tau_{F_{\wp'}}(\chi-\psi_2(\chi))$, whose
behaviour is controlled by the immediate extension of the result in 
Lemme 4.7 of \cite{Vinatier_jnumb} to the relative case (noticing that 
the only part of Lemme 2.1 which is required in its proof,
``$G_0=G_1$'', remains true, see Remark \ref{H0=H1} below). This
proves assertion \textsl{(i)}.  

%
If $\wp\mid p$ and $\wp\notin S_W$, once again our modified Galois Gauss
sum is the usual one $\tau_{F_\wp}^*$, so assertion \textsl{(ii)} follows
from Lemme 4.3 of \cite{Vinatier_jnumb} whose proof is readily
checked to apply to the current relative situation.
\end{proof}

In Section \ref{preliminary}, we will show, assuming Theorem
\ref{local}, that assertion \textsl{(ii)} of the preceding lemma
also holds when $\wp\in S_W$. We now explain how to deduce Theorem
\ref{THM} from this result. Using the functorial properties of the 
group determinant regarding induction on character functions
\cite[Theorem 12]{Frohlich-Alg_numb}, it yields
$$\iota^*(f_p)\in\Det(\bo_{E_\QQ^1}[G]^\times)\enspace.$$
In view of (\ref{iota}) we know that $\iota^*(f_p)$ is
$\Omega_{\Q_p}$-equivariant. Therefore,
$$\iota^*(f_p)\in\Det(\bo_{E_\QQ^1}[G]^\times)^{\Omega_{\Q_p}}
=\Det(\bo_{E_\QQ^1}[G]^\times)^{\Gal(E_\QQ^1/\Q_p)}
=\Det(\Z_p[G]^\times)
\enspace,$$
using Taylor's fixed point theorem for group determinants
\cite[Theorem 6]{tay1}. It follows by (\ref{iota-det}) that
$f_p\in\Det(\Z_p[G]^\times)$ for every rational prime $p$ dividing the
order of $G$, and this proves Theorem \ref{THM}.
%
\section{Preliminary results}\label{preliminary}
The core of this paper is the study of the remaining factor
$\R_\wp\T_\wp^\star$ when $\wp$ is a prime ideal of $\bo$ which is
wildly ramified in $N/F$ (hence the rational prime $p$ below $\wp$
divides the order of $G$). In this case, because of the assumption
in Theorem \ref{THM}, the local extension is abelian and weakly
ramified. In order to prepare for the extensive study of this factor
in the next section, we devote Subsection \ref{lawre} to the
description of these extensions, using results from Lubin-Tate theory;
in Subsection \ref{gauss} we define the Galois Gauss sum of characters
of Galois groups of such extensions, explain how we modify it and
state some of its properties; finally in Subsection \ref{reduction} we
show how to deduce from Theorem \ref{local} that
$\R_\wp\T_\wp^\star\in\Det(\bo_{E_\QQ^1}[G(\wp)]^\times)$ when $\wp\in S_W$.
\subsection{Local abelian weakly ramified extensions}\label{lawre}
Let $K$ denote a finite extension of $\Q_p$ for some rational prime
$p$, let $d$ denote the residual degree of $K/\Q_p$ and
set $q=p^d$. We intend to make use of Lubin-Tate theory to describe
the wildly and weakly ramified abelian extensions of $K$, so we fix some
(standard) notations. We refer to \cite[\S 3]{Serre-lubintate} or \cite{iwasawa}
for a brief or more detailed exposition of the
theory respectively.

If $\pi$ is a uniformising parameter of $K$ and $n$ a non
negative integer, we denote by $K_{\pi,n}$ the $n$-th division field
associated to $\pi$ over $K$. This is the same notation as in
\cite{Serre-lubintate}, but note that the numbering is different in \cite{iwasawa}.
We set $K_\pi=\cup_{n\ge 1}K_{\pi,n}$. For a positive integer
$s$, we denote by $K_{un}^s$ the unramified extension of $K$ of 
degree $s$ contained in $\Q_p^c$; we let $K_{un}=\cup_{s\ge 1}K_{un}^s$ be the
maximal unramified extension of $K$ in $\Q_p^c$. Recall our notation that $K^{ab}$ is the
maximal abelian extension of $K$ in $\Q_p^c$, with $\Gal(K^{ab}/K)=\Omega_K^{ab}$. Lubin-Tate theory states
that any abelian extension of $K$ is contained in the compositum of
$K_\pi$ and $K_{un}$:
$$K^{ab}=K_\pi K_{un}\enspace.$$
We remark that the fields $K_{\pi,n}$ and $K_{\pi}$ depend on the uniformising parameter $\pi$; yet we have the following result.
\begin{lemma}\label{s_lemma} 
Given uniformising parameters $\pi$ and $\pi'$ of $K$, then for all $n\in\mathbb{N}$, there exists an $s\in\mathbb{N}$ such that
 $K_{\pi,n}K_{un}^s=K_{\pi',n}K_{un}^s$ .
\end{lemma}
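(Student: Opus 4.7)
My plan is to translate the claim, via local class field theory, into a statement about norm subgroups of $K^\times$, which Lubin-Tate theory describes explicitly. Write $\pi' = u\pi$ with $u\in\bo_K^\times$; since the two uniformisers differ only by a unit, the desired equality of the two composita should reduce to a finite-order condition on $u$ modulo higher unit groups.

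Let $U_K^{(n)} = 1 + \pi^n\bo_K$ denote the standard higher unit groups (this is independent of the choice of uniformiser, and $U_K^{(0)} = \bo_K^\times$; one may assume $n\ge 1$ since $K_{\pi,0}=K$). The key input from Lubin-Tate theory is that the norm subgroup of $K_{\pi,n}/K$ equals $\pi^{\Z}\cdot U_K^{(n)}$: this uses both that $\pi$ itself is a norm from $K_{\pi,n}$ and the identification $\Gal(K_{\pi,n}/K)\cong\bo_K^\times/U_K^{(n)}$ provided by $\theta_K$. Combined with the familiar norm subgroup $\bo_K^\times\cdot\pi^{s\Z}$ of $K_{un}^s/K$, and using that compositum of abelian extensions corresponds to intersection of norm subgroups, I would compute
$$N_{K_{\pi,n}K_{un}^s/K}\bigl((K_{\pi,n}K_{un}^s)^\times\bigr) = (\pi^{\Z}U_K^{(n)})\cap(\bo_K^\times\pi^{s\Z}) = \pi^{s\Z}\cdot U_K^{(n)}\enspace.$$
The analogous computation with $\pi'$ in place of $\pi$ yields the norm subgroup $(\pi')^{s\Z}\cdot U_K^{(n)} = u^{s\Z}\pi^{s\Z}U_K^{(n)}$.

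These two subgroups coincide if and only if $u^{s}\in U_K^{(n)}$. Since $\bo_K^\times/U_K^{(n)}$ is a finite group, it suffices to choose $s$ to be any positive multiple of the order of the image of $u$ in this quotient. Invoking once more the bijective correspondence between open subgroups of finite index in $K^\times$ and finite abelian extensions of $K$ inside $K^{ab}$, this gives the equality $K_{\pi,n}K_{un}^s = K_{\pi',n}K_{un}^s$.

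The only non-bookkeeping step is the identification of the Lubin-Tate norm subgroup $\pi^{\Z}\cdot U_K^{(n)}$, which is standard and can be quoted from the Lubin-Tate references cited in the paper; no serious obstacle is expected beyond this.
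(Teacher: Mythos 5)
Your proof is correct, but it takes a genuinely different route from the paper's. The paper follows Serre's argument for $K_\pi K_{un}=K_{\pi'}K_{un}$ in \cite[\S3.7]{Serre-lubintate}, observing that each step survives truncation at level $n$ to give $K_{\pi,n}K_{un}=K_{\pi',n}K_{un}$; it then notes that $K_{\pi,n}K_{\pi',n}/K_{\pi,n}$ is consequently unramified and finite, hence of the form $K_{\pi,n}K_{un}^s/K_{\pi,n}$, and concludes by symmetry. You instead argue entirely through the norm-group dictionary of local class field theory: the norm subgroup of $K_{\pi,n}K_{un}^s$ is the intersection $(\pi^{\Z}U_K^n)\cap(\bo_K^\times\pi^{s\Z})=\pi^{s\Z}U_K^n$, and comparing with the analogous computation for $\pi'=u\pi$ reduces the claim to $u^s\in U_K^n$, which holds once $s$ is a multiple of the order of $u$ in the finite group $\bo_K^\times/U_K^n$. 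Both arguments are sound, and the Lubin--Tate input you need --- that the norm group of $K_{\pi,n}/K$ is $\pi^{\Z}U_K^n$ --- is standard and is essentially what the paper itself quotes elsewhere (\cite[Lemma 7.4]{iwasawa}, and the isomorphism $\Gamma^{(n)}\cong(\bo_K/\pi^n\bo_K)^\times$ stated just after Proposition \ref{tot-un}). What your version buys is an explicit and quantitative $s$ (any multiple of the order of $\pi'/\pi$ modulo $U_K^n$, so in particular $s=(q-1)q^{n-1}$ always works) and independence from the details of Serre's proof; what the paper's version buys is brevity and the avoidance of any explicit appeal to the existence/uniqueness theorem of local class field theory, at the cost of asking the reader to re-run Serre's argument with $K_{\pi}$ replaced by $K_{\pi,n}$.
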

\begin{proof}
In \cite[\S3.7]{Serre-lubintate} Serre proves the result $K^{ab}=K_\pi
K_{un}$ by first showing that $K_\pi K_{un}=K_{\pi'} K_{un}$ for all
uniformising parameters $\pi$ and $\pi'$. Following this proof we can
actually replace $K_{\pi}$ (resp. $K_{\pi'}$) with $K_{\pi,n}$
(resp. $K_{\pi',n}$) at every step to give $$K_{\pi,n}
K_{un}=K_{\pi',n}K_{un}\enspace.$$ This means that the compositum
$K_{\pi,n}K_{\pi',n}$ must be contained in $K_{\pi,n} K_{un}$ and
therefore the extension $K_{\pi,n}K_{\pi',n}/K_{\pi,n}$ is
unramified. We know that $[K_{\pi,n}K_{\pi',n}:K_{\pi,n}]$ is finite,
and therefore $K_{\pi,n}K_{\pi',n}=K_{\pi,n}K_{un}^s$ for some
$s$. The result now follows by symmetry. 
\end{proof}
\begin{proposition}\label{tot-un} 
Let $\pi$ be a given uniformising parameter of $K$ and let $L/K$ be a
weakly ramified abelian extension. Then there exist fields $L^{tot}$
and $L^{un}$ such that $L\subseteq L^{tot}L^{un}$, $L^{tot}\subseteq
K_{\pi,2}$ and $L^{tot}L^{un}/L$ and $L^{un}/K$ are unramified. 
\end{proposition}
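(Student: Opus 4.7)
The plan is to combine the upper-numbering formulation of the weakly ramified condition with Lubin--Tate's classification of abelian extensions, and then apply Galois correspondence inside $K_{\pi,2}K_{un}^r$.

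First I would translate the hypothesis $G_2(L/K) = 1$ into the upper-numbering statement $G^u(L/K) = 1$ for every real $u > 1$. This is a short computation with Herbrand's function together with Hasse--Arf: if $L/K$ is tamely ramified the unique upper break lies at $0$; if $L/K$ is wildly ramified, Hasse--Arf forces the inertia to equal its wild part (a $p$-group), so $\varphi_{L/K}$ is the identity up to $u = 1$ and the last upper break is at $1$. By local class field theory applied to $L/K$, the vanishing of $G^2(L/K)$ translates into $U_K^{(2)} \subseteq N_{L/K}(L^\times)$.

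Next, since $L/K$ is abelian, Lubin--Tate theory gives $L \subseteq K^{ab} = K_\pi K_{un}$, so $L \subseteq K_{\pi,n} K_{un}^r$ for some integers $n, r \geq 1$, and the norm group of the latter is $U_K^{(n)} \cdot \pi^{r\Z}$. Combined with $U_K^{(2)} \subseteq N_{L/K}(L^\times)$, this yields $N_{L/K}(L^\times) \supseteq U_K^{(2)} \pi^{r\Z}$, which is precisely the norm group of $K_{\pi,2} K_{un}^r$. The Galois correspondence of local class field theory then gives $L \subseteq K_{\pi,2} K_{un}^r$.

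Finally, since $K_{\pi,2}/K$ is totally ramified and $K_{un}^r/K$ unramified, $\Gal(K_{\pi,2}K_{un}^r/K) \cong \Gal(K_{\pi,2}/K) \times \Gal(K_{un}^r/K)$ is a direct product, so every intermediate field containing $K_{un}^r$ has the form $T \cdot K_{un}^r$ for some $T \subseteq K_{\pi,2}$. Setting $L^{un} := K_{un}^r$ and applying this to $L \cdot L^{un}$ produces $L^{tot} \subseteq K_{\pi,2}$ with $L \cdot L^{un} = L^{tot} \cdot L^{un}$. The four required conditions are then immediate: $L \subseteq L^{tot} L^{un}$ and $L^{tot} \subseteq K_{\pi,2}$ by construction, $L^{un}/K$ is unramified, and $L^{tot} L^{un}/L = L \cdot L^{un}/L$ is unramified because $L^{un}/K$ is. The main obstacle is the first step, where Hasse--Arf is needed to pass from the lower-numbering condition $G_2 = 1$ to the class-field-theoretic statement on the norm group.
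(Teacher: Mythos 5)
Your proof is correct, and it takes a genuinely different route from the paper's. The paper first invokes an external structure theorem (\cite[Theorem 4.1]{Pickett2}) to produce a preliminary splitting $L\subseteq \tilde L^{tot}\tilde L^{un}$ with $\tilde L^{tot}/K$ totally ramified, then bounds the conductor of $\tilde L^{tot}/K$ by $2$ via Hilbert's formula (\cite[Coro.\ to Lemma 7.14]{iwasawa}) to place $\tilde L^{tot}$ inside $K_{\pi',2}$ for some \emph{a priori different} uniformiser $\pi'$, and finally needs Lemma \ref{s_lemma} to trade $\pi'$ for the prescribed $\pi$ at the cost of enlarging the unramified part. You instead work directly with the norm group of the full extension $L/K$: the Hasse--Arf/Herbrand computation gives $U_K^2\subseteq N_{L/K}(L^\times)$ (this is the same conductor bound the paper uses, just phrased upstairs), and since $N_{L/K}(L^\times)$ also contains the norm group $U_K^n\,\pi^{r\Z}$ of some $K_{\pi,n}K_{un}^r\supseteq L$, it contains $U_K^2\,\pi^{r\Z}$, whence $L\subseteq K_{\pi,2}K_{un}^r$ by the inclusion-reversing correspondence of local class field theory --- crucially for the \emph{given} $\pi$, so no change-of-uniformiser lemma is needed. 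Extracting $L^{tot}$ from the direct product decomposition of $\Gal(K_{\pi,2}K_{un}^r/K)$ is then routine and matches the spirit of the paper's final step ($L^{tot}=\tilde L^{tot}K_{un}^s\cap K_{\pi,2}$). Your argument is more self-contained (no appeal to \cite{Pickett2} or to Lemma \ref{s_lemma}) and makes the equivalence ``weakly ramified $\Leftrightarrow$ conductor $\le 2$'' the visible engine of the proof; the paper's version is more modular, reusing its Lemma \ref{s_lemma} and an existing decomposition theorem, and keeps the totally ramified piece in view from the start, which is the shape later exploited in Corollary \ref{MLun}. One small point worth making explicit in your write-up: the Hasse--Arf step (inertia equals its wild part when $G_1\ne G_2=1$) uses the abelian hypothesis, which is exactly Remark \ref{H0=H1} of the paper.
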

Note that $K_\pi/K$ is totally ramified, so $L^{tot}\subseteq
K_{\pi,2}$ implies $L^{tot}/K$ totally ramified.
\begin{proof}
From \cite[Theorem 4.1]{Pickett2} we know that there exist fields $\tilde{L}^{tot}$
and $\tilde{L}^{un}$ such that $L\subseteq
\tilde{L}^{tot}\tilde{L}^{un}$, $\tilde{L}^{tot}/K$ is totally
ramified and $\tilde{L}^{tot}\tilde{L}^{un}/L$ and $\tilde{L}^{un}/K$
are unramified. 
This implies that $\tilde{L}^{tot}/K$ is abelian totally and weakly
ramified, so there exists a uniformising parameter $\pi'$ of $K$ such
that $\tilde{L}^{tot}\subset K_{\pi'}$. Further, let $H$ denote its Galois group
and $c$ the valuation of its conductor: set $U_K^0=\bo_K^\times$ and
for any positive integer $n$, $U_K^n=1+\pi^n\bo_K$, then $c$ is
the minimal integer $n\ge 0$ such that $U_K^n$ is contained in the norm group
$N_{\tilde{L}^{tot}/K}((\tilde{L}^{tot})^\times)$. We know that 
$c=\frac{|H_0|+|H_1|}{|H_0|}$ by \cite[Coro. to Lemma
  7.14]{iwasawa}, hence $c\le 2$. Combining Proposition 7.2(ii) and
Lemma 7.4 of \cite{iwasawa} (recall the numbering of division
fields there is different from ours), we get:
$$\tilde{L}^{tot}\subset K_{\pi',2}\enspace.$$ 

From Lemma \ref{s_lemma} we then have an integer $s$ such that 
 $K_{un}^sK_{\pi',2}=K_{un}^sK_{\pi,2}$. Our result then follows by
 taking $L^{un}=\tilde{L}^{un}K_{un}^s$ and
 $L^{tot}=\tilde{L}^{tot}K_{un}^s\cap K_{\pi,2}$. 
\end{proof}
Let $\Gamma^{(n)}=\Gal(K_{\pi,n}/K)$ then the Artin map 
$\theta_K:K^\times\rightarrow\Omega_K^{ab}$ yields an isomorphism
$(\bo_K/\pi^n\bo_K)^\times\cong\Gamma^{(n)}$, so that
$$\Gamma^{(2)}\cong\Gamma^{(1)}\times\Gamma\enspace,$$
where $\Gamma^{(1)}$ is cyclic of order $q-1$ and
$\Gamma=\{\theta_K(1+\pi u),u\in\bo_K/\pi\bo_K\},$ hence
$\Gamma\cong\bo_K/\pi\bo_K$ is $p$-elementary
abelian of order $q$. We state the following fact for future reference.
\begin{proposition}\label{r}
The subextension $M_{\pi,2}$ of $K_{\pi,2}/K$ fixed by $\Gamma^{(1)}$ has 
$$r=\frac{p^d-1}{p-1}=1+p+\cdots+p^{d-1}$$ 
subextensions $M_i$, $1\le i\le r$, of degree $p$ over $K$, each of
which is the fixed subextension of $M_{\pi,2}/K$ by the kernel of an
irreducible character $\chi_i$ of $\Gal(M_{\pi,2}/K)\cong\Gamma$. 
\end{proposition}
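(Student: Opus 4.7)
The plan is to reduce the statement to elementary linear algebra over $\mathbb{F}_p$ via the Galois correspondence. First, I would observe that by the text just above the proposition, $\Gamma = \Gal(M_{\pi,2}/K)$ is $p$-elementary abelian of order $q = p^d$, since $\Gamma \cong \bo_K/\pi\bo_K$ (which is the residue field of $K$, with additive structure only being used here). Hence $\Gamma$ carries a natural structure of $\mathbb{F}_p$-vector space of dimension $d$, and its subgroups are exactly its $\mathbb{F}_p$-subspaces. By Galois theory applied to $M_{\pi,2}/K$, subextensions $M_i$ of $M_{\pi,2}$ with $[M_i:K]=p$ are in bijection with index-$p$ subgroups of $\Gamma$, i.e.\ with hyperplanes of the $\mathbb{F}_p$-vector space $\Gamma$.

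Next, I would count these hyperplanes. Each hyperplane in $\Gamma$ is the kernel of a nonzero $\mathbb{F}_p$-linear functional $\Gamma \to \mathbb{F}_p$, and two such functionals share the same kernel if and only if they differ by multiplication by an element of $\mathbb{F}_p^\times$. The dual space $\Gamma^*$ has cardinality $p^d$, so the number of hyperplanes equals
\[
\frac{p^d - 1}{p - 1} = 1 + p + \cdots + p^{d-1} = r,
\]
which yields the first claim.

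For the second claim, I would identify the nonzero $\mathbb{F}_p$-linear functionals on $\Gamma$ with the nontrivial irreducible complex characters of $\Gamma$. Since $\Gamma$ is abelian, every irreducible character is one-dimensional; since $\Gamma$ has exponent $p$, each such character takes values in the group $\mu_p$ of complex $p$-th roots of unity, and picking a generator of $\mu_p$ identifies $\mathrm{Hom}(\Gamma,\mu_p)$ with $\mathrm{Hom}_{\mathbb{F}_p}(\Gamma,\mathbb{F}_p)=\Gamma^*$. Each hyperplane of $\Gamma$ is then the kernel of an irreducible character $\chi_i$, uniquely defined up to $\mathbb{F}_p^\times$-scaling, and the corresponding field $M_i$ is the fixed field of $\ker\chi_i$.

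There is no serious obstacle here: the content is purely group-theoretic counting in a finite $\mathbb{F}_p$-vector space, once one has the identification $\Gamma \cong \bo_K/\pi\bo_K$ (which is recorded in the discussion preceding the proposition and comes from the isomorphism $(\bo_K/\pi^2\bo_K)^\times \cong \Gamma^{(2)}$ provided by the Artin map of Lubin--Tate theory). The only point to keep straight is the distinction between characters of $\Gamma$ and functionals on the $\mathbb{F}_p$-vector space $\Gamma$, which as above is immediate since $\Gamma$ is killed by $p$.
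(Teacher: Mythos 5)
Your proof is correct and complete. The paper itself gives no argument for this proposition --- it simply refers the reader to \S2.2 of a paper of Vinatier --- so there is nothing to compare step by step; your write-up supplies exactly the standard argument one would expect that reference to contain: the Galois correspondence for the abelian extension $M_{\pi,2}/K$ identifies degree-$p$ subextensions with index-$p$ subgroups of $\Gamma$, the elementary abelian structure turns these into hyperplanes of a $d$-dimensional $\mathbb{F}_p$-vector space, of which there are $(p^d-1)/(p-1)$, and the exponent-$p$ condition lets you identify nontrivial irreducible characters of $\Gamma$ with nonzero linear functionals so that each hyperplane is a kernel $\ker\chi_i$. You are also right to flag that $\chi_i$ is only determined up to a prime-to-$p$ power; the paper relies on exactly this ambiguity later (Notation 4.8), so your remark is consistent with how the proposition is used.
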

\begin{proof}
See for instance \cite[\S2.2]{Vinatier-3}.
\end{proof}
Notice further that $K_{\pi,2}=K_{\pi,1}M_{\pi,2}$ and $M_{\pi,2}$ is the
maximal $p$-extension of $K$ contained in $K_{\pi,2}$.
\begin{corollary}\label{MLun}
In the notations of Proposition \ref{tot-un},
suppose further that $L/K$ is wildly ramified, then the conclusion of Proposition
\ref{tot-un} holds with $L^{tot}\subseteq M_{\pi,2}$. If moreover the
ramification group of $L/K$ is cyclic, then $L^{tot}=M_i$ for an
integer $i\in\{1,\ldots,r\}$.
\end{corollary}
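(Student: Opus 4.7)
Apply Proposition~\ref{tot-un} to $L/K$ to obtain $\tilde L^{tot}\subseteq K_{\pi,2}$ totally ramified over $K$ and $\tilde L^{un}/K$ unramified with $L\subseteq\tilde L^{tot}\tilde L^{un}$ and $\tilde L^{tot}\tilde L^{un}/L$ unramified. Using the direct product decomposition $\Gamma^{(2)}\cong\Gamma^{(1)}\times\Gamma$ recalled before Proposition~\ref{r}, which translates into the compositum $K_{\pi,2}=K_{\pi,1}\cdot M_{\pi,2}$ of a tame totally ramified cyclic extension (of degree $q-1$) with the wild totally ramified $p$-elementary abelian extension (of degree $q$), I decompose $\tilde L^{tot}=\tilde L^{t}\cdot L^{tot}$, where $\tilde L^{t}:=\tilde L^{tot}\cap K_{\pi,1}$ is tame and $L^{tot}:=\tilde L^{tot}\cap M_{\pi,2}$ is a $p$-power extension lying in $M_{\pi,2}$.

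For the first assertion I would show $L\subseteq L^{tot}\tilde L^{un}$, so that one may take $L^{un}:=\tilde L^{un}$. Observe that $\Gal(\tilde L^{tot}\tilde L^{un}/\tilde L^{un})\cong\Gal(\tilde L^{tot}/K)=\Gal(\tilde L^{t}/K)\times\Gal(L^{tot}/K)$, a direct product of a group of order prime to $p$ with a $p$-group. The intermediate field $L\tilde L^{un}$ corresponds to a quotient of this Galois group which is canonically isomorphic to the inertia subgroup $H_0$ of $L/K$, since the maximal unramified subextension of $L/K$ is automatically contained in $\tilde L^{un}$ (the latter being the maximal unramified subextension of $\tilde L^{tot}\tilde L^{un}/K$). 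Now $L/K$ is weakly and wildly ramified, and the hypotheses of Theorem~\ref{THM}, transported to the local picture, yield $H_0=H_1$ (cf.\ the forthcoming Remark~\ref{H0=H1}), so $H_0$ is a $p$-group. A surjection from the above direct product onto a $p$-group must kill the prime-to-$p$ factor $\Gal(\tilde L^{t}/K)$, whence $L\tilde L^{un}$ is contained in the fixed subfield of $\Gal(\tilde L^{t}/K)\times\{1\}$, namely $L^{tot}\tilde L^{un}$. The unramifiedness of $L^{tot}\tilde L^{un}/L$ follows from it being a subextension of the unramified extension $\tilde L^{tot}\tilde L^{un}/L$.

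For the second assertion, cyclicity of the ramification group together with $H_0=H_1$ being a nontrivial $p$-elementary abelian group forces $H_0\cong C_p$. The surjection $\Gal(L^{tot}/K)\twoheadrightarrow H_0$ built above then has a kernel $N$ of index $p$; replacing $L^{tot}$ by the fixed subfield of $N$, which still lies inside $M_{\pi,2}$, yields a cyclic degree-$p$ subextension of $M_{\pi,2}/K$, and $L\subseteq L^{tot}\tilde L^{un}$ is preserved since $L\tilde L^{un}$ was already the fixed field of $N$ inside $L^{tot}\tilde L^{un}/\tilde L^{un}$. By Proposition~\ref{r}, such a degree-$p$ subextension of $M_{\pi,2}$ coincides with one of the $M_i$, so $L^{tot}=M_i$ for some $i\in\{1,\ldots,r\}$.

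\textbf{Main obstacle.} The critical ingredient is the equality $H_0=H_1$, which is not automatic from weak and wild ramification alone but is granted by the paper's global hypotheses through the forthcoming Remark~\ref{H0=H1}; without it the prime-to-$p$ factor $\Gal(\tilde L^{t}/K)$ could contribute nontrivially to the inertia of $L/K$ and the absorption of the tame totally ramified factor would fail. Isolating this input and then executing the direct-product argument inside the triple compositum $\tilde L^{t}\cdot L^{tot}\cdot\tilde L^{un}$ is where the real work lies.
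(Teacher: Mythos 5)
Your overall strategy is sound and close to the paper's: both arguments rest on the fact that the inertia group $H_0$ of $L/K$ is a $p$-group, transported through the direct-product decomposition $\Gal(L^{tot}L^{un}/K)=\Gal(L^{tot}/K)\times\Gal(L^{un}/K)$. The paper is slightly more direct: it observes that, $L^{tot}L^{un}/L$ being unramified and $L^{tot}/K$ totally ramified, restriction gives $\Gal(L^{tot}/K)=\Gal(L^{tot}/K)_0\cong H_0$ on the nose, so the $L^{tot}$ already furnished by Proposition \ref{tot-un} is a $p$-extension, hence lands in $M_{\pi,2}=K_{\pi,2}^{\Gamma^{(1)}}$, and is cyclic of order $p$ (as a cyclic quotient of the $p$-elementary abelian $\Gamma$) in the second case. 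Your splitting of $\tilde L^{tot}$ into tame and wild parts and the subsequent shrinking to the fixed field of $N$ is correct but redundant --- your surjection onto $H_0$ is in fact an isomorphism, since $\tilde L^{tot}\tilde L^{un}/L\tilde L^{un}$ is both unramified and totally ramified, hence trivial.

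There is, however, one genuine gap: the ``critical ingredient'' $H_0=H_1$ is never proved. You invoke Remark \ref{H0=H1}, but in the paper that remark is justified by the parenthetical ``(see the proof of Corollary \ref{MLun})'' --- it is a forward pointer whose content is established precisely in the proof you are writing, so citing it here is circular. You also misattribute its source: it does not come from the global hypotheses of Theorem \ref{THM}, but is a purely local consequence of the fact that $L/K$ is \emph{abelian} (which is already part of the hypotheses of Proposition \ref{tot-un}) together with wild and weak ramification. The paper supplies the missing step via \cite[IV.2, Coro.\,2 to Prop.\,9]{Serre}: since $H$ is abelian and $H_1\neq H_2=1$, the integer $i=1$ satisfies $H_i\neq H_{i+1}$, forcing $|H_0/H_1|$ to divide $1$, i.e.\ $H_0=H_1$, so $H_0$ is a $p$-group (being $p$-elementary abelian as a quotient $H_1/H_2$). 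Without this input your absorption of the tame factor $\Gal(\tilde L^{t}/K)$ does not go through, so you should insert this argument explicitly rather than defer to the remark.
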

\begin{proof}
Let $H=\Gal(L/K)$ and for $i\ge -1$, let $H_i$ denote the $i$-th
ramification subgroup (in lower notation). One has $H_1\not=H_2=1$
since $L/K$ is wildly and weakly ramified, so $H_0/H_1=1$ by
\cite[IV.2, Coro.$\,$2 to Prop.$\,$9]{Serre}, namely $H_0$ is a
$p$-group. 

Let $L^{tot}$ and $L^{un}$ be as given by Proposition
\ref{tot-un}. They are linearly disjoint over $K$, so
$\Gal(L^{tot}L^{un}/K)$ equals the direct product 
$\Gal(L^{tot}/K)\times\Gal(L^{un}/K)$ and the ramification group of
$L^{tot}L^{un}/K$ equals that of $L^{tot}/K$. Since $L^{tot}/K$ is
totally ramified and $L^{tot}L^{un}/L$
is unramified, this yields (\textit{e.g.} using Herbrand's theorem)
$$\Gal(L^{tot}/K)=\Gal(L^{tot}/K)_0=H_0\enspace.$$ 
Since $H_0$ is a $p$-group, we get that $L^{tot}\subseteq
{K_{\pi,2}}^{\Gamma^{(1)}}=M_{\pi,2}$. Further $\Gal(L^{tot}/K)$ is a quotient
of the $p$-elementary abelian group $\Gamma$, hence has to be of order $p$ if cyclic.
\end{proof}
\subsection{Local weakly ramified Galois Gauss sums}\label{gauss}
Again here $p$ is a fixed rational prime and $K$ is a finite
extension of $\Q_p$. 
Recall from Notation \ref{notation} that
$\Q^c$ denotes the algebraic closure of $\Q$ in the field of complex 
numbers. For any non negative integer $n$, let $\xi_n$ be
the $p^n$-th primitive root of unity in $\Q^c$ given by
$\xi_n=\exp(\frac{2i\pi}{p^n})$, with the standard notations for
complex numbers, in particular $\xi_0=1$; in the sequel we
shall use the notation:
\begin{equation}\label{zetaxi}
\zeta=\xi_1\ ,\quad \xi=\xi_2
\end{equation}
since we will mainly be concerned with these two $p^n$-th roots of
unity. From now on we use the letter $\pi$ to denote a uniformising
parameter of $K$. 
\subsubsection{Abelian Galois Gauss sum.}\label{ab_GGsum}
Let $L$ be a finite abelian 
extension of $K$ with Galois group $H$. 
We denote by $\wh H$ the group of irreducible characters of $H$
with values in $\Q^c$. Any $\chi\in\wh H$ can be seen as a character of 
$K^\times$ using the composition of the Artin map $\theta_K$ of $K$ with the restriction of
automorphisms to $L$: 
$$\theta_{L/K}:K^\times\rightarrow\Omega_K^{ab}\twoheadrightarrow
\Gal(L/K)=H\enspace.$$
We shall also denote by $\chi$ the character of $K^\times$ obtained in
this way. Set $U_K^0=\bo_K^\times$ and $U_K^m=1+\pi^m\bo_K$ for
positive integers $m$. The conductor $\f(\chi)$ of the character $\chi$
is $\pi^m\bo_K$, where $m$ is the smallest integer such that
$\chi(U_K^m)=1$. 

Let $\D_K=\pi^s\bo_K$ denote the absolute different of $K/\Q_p$ 
and let $\psi_K$ denote the standard additive character of $K$, which is defined by
composing the trace $Tr=Tr_{K/\Q_p}$ with the additive
homomorphism $\psi_p:\Q_p\rightarrow(\Q^c)^\times$ such that
$\psi_p(\Z_p)=1$ and, for any natural integer $n$,
$\psi_p(\frac{1}{p^n})=\xi_n$, the $p^n$-th root of unity defined
above. 
The (local) Galois Gauss sum $\tau_K$ is then defined by \cite[II.2
  p.29]{ma}:
$$\tau_K(\chi)=\sum_{x}\chi\left(\frac{x}{\pi^{s+m}}\right)\psi_K\left(\frac{x}{\pi^{s+m}}\right)\enspace,$$
where $m$ is such that $\f(\chi)=\pi^m\bo_K$ and $x$ runs through a
set of representatives of $U_K^0/U_K^m$. 
In particular $\tau_K(\chi)=\chi(\pi^{-s})$ if $\f(\chi)=\bo_K$.

The Galois Gauss sum is now defined on $\wh H$. Since $\wh H$ is a
basis of the free group of virtual characters $R_H$, we extend $\tau_K$
to a function on $R_H$ by linearity:
$\tau_K(\chi+\chi')=\tau_K(\chi)\tau_K(\chi')$. 
Inductivity in degree $0$ then enables one to extend
$\tau_K$ to virtual characters of non abelian extensions of $K$ ---
see \cite[Theorem 18]{Frohlich-Alg_numb}. Note that the conductor
function $\f$ extends to $R_H$ the same way.

Before modifying the abelian Galois Gauss sum, we introduce the
non-ramified part $n_\chi$ of $\chi\in\wh H$ by $n_\chi=\chi$ if
$\chi$ is unramified, $n_\chi=0$ otherwise. The map $\chi\mapsto
n_\chi$ then extends to an endomorphism of the additive group $R_H$ by
linearity ($n_{\chi+\chi'}=n_\chi+n_{\chi'}$). One easily checks that
if $\chi\in R_H$ is such that $\chi=n_\chi$, then:
\begin{equation}\label{unram1}
\tau_K(\chi)=\Dab_{\chi}(\pi^{-s})=\Dab_{\chi}(\D_K^{-1})\enspace.
\end{equation}
\subsubsection{Modification in the tame and weak cases.}\label{subsub:modif}
In this paper, we will only have to consider the case where $L/K$ is tamely or
weakly ramified, namely $H_1$ or $H_2$ is trivial. Since $\theta_{L/K}$  
sends $U_K^m$ to the $m$-th ramification group in the upper numbering
$H^m$ \cite[4.1 Theorem 1]{Serre-lubintate}, and since $H^1=H_1$ and
$H_2=1\Rightarrow H^2=1$, we see that for $\chi\in\wh H$ we will always have
$\pi^2\bo_K\subseteq \f(\chi)$. We shall say that $\chi\in\wh H$ is unramified if
$\f(\chi)=\bo_K$, tamely ramified if $\f(\chi)=\pi\bo_K$ and weakly ramified if
$\f(\chi)=\pi^2\bo_K$. We shall also say that $\chi\in R_H$ is
unramified if $\f(\chi)=\bo_K$ (or equivalently if $\chi=n_\chi$).
\begin{remark}\label{H0=H1}
If $L/K$ is wildly and weakly ramified and abelian, then
$H_0=H_1$ (see the proof of Corollary \ref{MLun}), thus any
$\chi\in\wh H$ is either unramified or weakly ramified. Indeed in the 
abelian case, the tame and ``wild and weak'' situations do not
occur simultaneously.
\end{remark}

We recall how the Galois Gauss sum is modified in the tame
abelian situation. Fix an element $c_{K,1}\in K$ such that
$c_{K,1}\bo_K=\pi\D_K$. Suppose $L/K$ is (at most) tamely ramified and $\chi$ is
a virtual character of $H$. Recall the definition of the non ramified part
$n_\chi$ of $\chi$ above. The (tame) non ramified characteristic of $\chi$ is
$y_{K,1}(\chi)=(-1)^{\deg(n_\chi)}\Dab_{n_\chi}(\pi)$,
and its modified Galois Gauss sum is
$$\tau_K^\star(\chi)=\tau_K(\chi)y_{K,1}(\chi)^{-1}\Dab_\chi(c_{K,1})\enspace.$$
This function is usually denoted by $\tau_K^*$
\cite[IV.1]{Frohlich-Alg_numb}, we changed the shape of the
star in the exponent to stress the fact that the Galois Gauss sum will be
modified in a different (yet similar) way in the wild and weak
case. Indeed the above remark enables us to treat these two cases separately.

Fix an element $c_{K,2}\in K$ such that
$c_{K,2}\bo_K=\pi^2\D_K$. Note that, if $K'$ is a finite Galois
extension of $K$ with uniformising parameter $\pi'$ and Galois group
$H'$, with $H'_0=H'_1$ and $H'_2=1$ (call that property ``purely
weakly ramified''), then $c_{K,2}\bo_{K'}={\pi'}^2\D_{K'}$.

Suppose $L/K$ is wildly and weakly ramified and abelian.
\begin{definition}\label{def:modif}
The (weak) non ramified characteristic of $\chi\in R_H$ is 
$$y_{K,2}(\chi)=(-1)^{\deg(n_\chi)}\Dab_{n_\chi}(\pi^2)\enspace;$$
its modified Galois Gauss sum is
$$\tau_K^\star(\chi)=\tau_K(\chi)y_{K,2}(\chi)^{-1}\Dab_\chi(c_{K,2})\enspace.$$
\end{definition}
Note that since $n_\chi$ is an unramified character,
$y_{K,2}(\chi)$ does not depend on the choice of the uniformising
parameter $\pi$. On the contrary, $\tau_K^\star$ depends on the choice
of the element $c_{K,2}$, unless $\chi$ is unramified; changing
$c_{K,2}$ multiplies $\tau_K^\star$ by an element of $\Det(H_0)$.

For the purposes of this paper we will only need the abelian modified Galois Gauss sum. Nevertheless, this notion extends to non abelian characters as
in the tame situation. First one shows using the results in Subsection
\ref{lawre} that there exists a maximal ``purely weakly ramified''
(see above)
extension $K^{pw}$ of $K$ --- let $R_{(K)}^{pw}$ denote the free
group generated by the characters of the irreducible representations
of $\Gal(K^{pw}/K)$ over $\Q^c$ with open kernel. One then
easily checks that the proof of \cite[Theorem
  29(ii)]{Frohlich-Alg_numb} shows \textit{mutatis mutandis}
that $y_{K,2}$ is fully inductive, \textit{i.e.}, for $K\subseteq K'\subset
K^{pw}$ and $\chi\in R_{(K')}^{pw}$:
$$y_{K'\!,2}(\chi)=y_{K,2}(\ind\chi)\enspace,$$
where $\ind\chi$ is the induced character of $\chi$ in
$R_{(K)}^{pw}$. Since $\tau_K$ is inductive in degree $0$ in the
purely weakly ramified context as well as in the tame one --- see for
instance \cite[II.4 p.39]{ma}, and since the same holds for
$\Dab(c_{K,2})$ --- see the proof of
\cite[Prop. IV.1.1(iv)]{Frohlich-Alg_numb}, the above definition 
yields modified Galois Gauss sums $\tau_{K'}^\star$ on $R_{(K')}^{pw}$
for any $K'/K$ contained in $K^{pw}$, which are inductive in degree $0$. 

We now check that our modification only involves factors in the
denominator of the Hom-description --- see \cite[Theorem
  29(i)]{Frohlich-Alg_numb} for the tame case.
\begin{lemma}\label{modifOK}
Suppose $L/K$ is abelian, of Galois group $H$, and either tamely or
wildly and weakly ramified. 
The map $\tau_K^\star/\tau_K$ belongs to $\Hom_{\Omega_\Q}(R_H,(\Q^c)^\times)\Det(H)$.
\end{lemma}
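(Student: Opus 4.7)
The tame case is exactly \cite[Theorem 29(i)]{Frohlich-Alg_numb}, so I concentrate on the wildly and weakly ramified case. Here $H$ is abelian by hypothesis, and by Remark \ref{H0=H1} each $\chi\in\wh H$ is either unramified or weakly ramified. By Definition \ref{def:modif},
$$\tau_K^\star(\chi)/\tau_K(\chi)=y_{K,2}(\chi)^{-1}\Dab_\chi(c_{K,2}),$$
and the aim is to display this as a product of an element of $\Det(H)$ and an $\Omega_\Q$-equivariant homomorphism.

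First I would observe that $\Dab_\chi(c_{K,2})\in\Det(H)$. Since $H$ is abelian, $\Dab_\chi=\chi$, and extending $\Dab_\chi$ to $\Omega_K$ via the Artin map yields $\Dab_\chi(c_{K,2})=\chi\big(\theta_{L/K}(c_{K,2})\big)$. Setting $h_0:=\theta_{L/K}(c_{K,2})\in H$, this is precisely the element of $\Det(H)$ attached to $h_0$.

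Next I would analyse $y_{K,2}(\chi)^{-1}$, which depends only on the unramified part $n_\chi$. Because $H$ is abelian we again have $\Dab_{n_\chi}(\pi^2)=n_\chi\big(\theta_{L/K}(\pi^2)\big)$, and since $n_\chi$ factors through the unramified quotient of $H$ while $\theta_{L/K}(\pi)$ restricts to the Frobenius on the maximal unramified subextension of $L/K$, this expression is intrinsic and compatible with the $\Omega_\Q$-action on characters. Together with the manifestly equivariant factor $(-1)^{\deg n_\chi}$ (the unramified part and its degree being preserved by Galois conjugation), this is enough to place $y_{K,2}^{-1}$ in $\Hom_{\Omega_\Q}(R_H,(\Q^c)^\times)$.

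The main obstacle will be to justify rigorously that the combined product $y_{K,2}^{-1}\Dab(c_{K,2})$ really lies in $\Hom_{\Omega_\Q}(R_H,(\Q^c)^\times)\Det(H)$, rather than just formally splitting into the two pieces above. In Fr\"ohlich's proof of the tame case the crucial input is the explicit formula for the $\Omega_\Q$-action on $\tau_K(\chi)$, and the modification $y_{K,1}^{-1}\Dab(c_{K,1})$ with $c_{K,1}\bo_K=\pi\D_K$ is engineered to absorb its failure of equivariance, matching the conductor $\pi\bo_K$ of a tamely ramified character. In the wild and weak case the conductor of any ramified $\chi\in\wh H$ is instead $\pi^2\bo_K$, and the parallel choice $c_{K,2}\bo_K=\pi^2\D_K$ makes Fr\"ohlich's computation go through \textit{mutatis mutandis} with $\pi$ replaced by $\pi^2$ throughout, yielding the required decomposition.
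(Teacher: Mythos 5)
Your first two steps are exactly the paper's proof: the authors note that $y_{K,i}$ ($i=1$ or $2$) is $\Omega_\Q$-equivariant with root-of-unity values since $\Dab_{n_\chi}$ is an abelian character, and that $\Dab_\chi(c_{K,i})=\Dab_\chi(h_{K,i})$ with $h_{K,i}=\theta_{L/K}(c_{K,i})\in H$, whence $\chi\mapsto\Dab_\chi(c_{K,i})\in\Det(H)$; they run this single argument uniformly for both the tame and the wild-and-weak case rather than citing Fr\"ohlich for the former. Your closing paragraph, however, manufactures an obstacle that is not there: the lemma asserts membership of $\tau_K^\star/\tau_K=y_{K,i}^{-1}\Dab(c_{K,i})$ in the \emph{product set} $\Hom_{\Omega_\Q}(R_H,(\Q^c)^\times)\Det(H)$, so once each factor is placed in its respective subgroup the conclusion is immediate --- there is no combined equivariance to verify, and no analogue of Fr\"ohlich's computation of the $\Omega_\Q$-action on $\tau_K(\chi)$ is needed (that computation is relevant to the equivariance of $\tau_K^\star$ itself, which is a different and stronger statement not claimed here). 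You should delete that paragraph and recognise that your proof is already complete after the two observations.
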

\begin{proof}
Let $i=1$ or $2$ depending on whether $L/K$ is tamely or weakly
ramified, and let $\chi\in R_H$. Clearly $y_{K,i}$ is
$\Omega_\Q$-equivariant and takes roots of unity values, since
$\Dab_{n_\chi}$ is an abelian character; set
$h_{K,i}=\theta_{L/K}(c_{K,i})\in H$, then
$\Dab_\chi(c_{K,i})=\Dab_\chi(h_{K,i})$. Therefore,
$\chi\mapsto\Dab_\chi(c_{K,i})\in\Det(H)$.  
\end{proof}

Note that when $\chi\in\wh H$ is weakly ramified, we get
$\tau_K^\star(\chi)=\tau_K(\chi)\chi(c_{K,2})$. Further recall from
\cite[\S1]{tate2} that in this case, the Galois Gauss sum is linked to
the local root number $W(\chi)$ by: 
\begin{equation}\label{rootnumber}
\tau_K(\chi)=p^dW({\chi}^{-1})\enspace,
\end{equation}
where $d$ is the residual degree of $K/\Q_p$. We now show the
following very useful property --- see
\cite[Prop. IV.1.1(vi)]{Frohlich-Alg_numb} for the analogous one in
the tame situation.   
\begin{proposition}\label{modif-unr}
Suppose $L/K$ is wildly and weakly ramified with abelian Galois group
$H$, $\chi,\phi\in\wh H$ with $\phi$ unramified, then:
$$\tau_K^\star(\phi)=-1\ ,\quad\tau_K^\star(\phi\chi)=\tau_K^\star(\chi)\enspace.$$  
\end{proposition}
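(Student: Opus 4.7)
The plan is to expand Definition~\ref{def:modif} and exploit the fact that an unramified abelian character of $K^\times$ factors through the valuation map, so that its value on $c_{K,2}$ depends only on $v_K(c_{K,2})=s+2$, and its value on any unit is $1$. For the first identity, since $\phi$ is one-dimensional and unramified, one has $n_\phi=\phi$ and $\deg(n_\phi)=1$, giving $y_{K,2}(\phi)=-\phi(\pi)^2$. Because $\phi=n_\phi$, equation~(\ref{unram1}) supplies $\tau_K(\phi)=\phi(\pi^{-s})$. Finally $\Dab_\phi(c_{K,2})=\phi(c_{K,2})=\phi(\pi)^{s+2}$, since $\phi$ is trivial on $\bo_K^\times$ and $c_{K,2}\bo_K=\pi^{s+2}\bo_K$. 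Multiplying the three ingredients, the powers of $\phi(\pi)$ cancel and one is left with $\tau_K^\star(\phi)=-1$.

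For the second identity I would split into cases according to Remark~\ref{H0=H1}: the character $\chi\in\wh H$ is either unramified or weakly ramified, and since $\phi$ is trivial on $U_K^0$ the conductor of $\phi\chi$ equals that of $\chi$. When $\chi$ is unramified, so is $\phi\chi$, and the first identity applied to both immediately yields $\tau_K^\star(\phi\chi)=-1=\tau_K^\star(\chi)$. When $\chi$ is weakly ramified, $n_{\phi\chi}=0=n_\chi$, so $y_{K,2}=1$ on both characters; in the defining sum for $\tau_K(\phi\chi)$ the summation variable $x$ ranges over representatives of $\bo_K^\times/U_K^2$, and since $\phi(x)=1$ there, the factor $\phi(\pi^{-s-2})=\phi(\pi)^{-s-2}$ pulls out to give $\tau_K(\phi\chi)=\phi(\pi)^{-s-2}\tau_K(\chi)$. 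Similarly $\Dab_{\phi\chi}(c_{K,2})=\phi(\pi)^{s+2}\Dab_\chi(c_{K,2})$, and the two powers of $\phi(\pi)$ cancel in the modified Gauss sum, leaving $\tau_K^\star(\phi\chi)=\tau_K^\star(\chi)$.

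The only point needing care is the standard unramified-twist manipulation showing $\tau_K(\phi\chi)$ equals $\phi$ evaluated at a well-defined power of $\pi$ times $\tau_K(\chi)$; it follows at once from the defining sum and the vanishing of $\phi$ on principal units, so no real obstacle arises. The whole argument reduces to careful bookkeeping on the factors $\phi(\pi)^{\pm(s+2)}$ coming from $\tau_K$ and $\Dab(\,\cdot\,|c_{K,2})$, which cancel exactly against the sign produced by $y_{K,2}$ in the unramified case and against each other in the weakly ramified case.
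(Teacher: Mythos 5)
Your proof is correct. The first identity and the overall bookkeeping (isolating the three factors $\tau_K$, $y_{K,2}^{-1}$ and $\Dab(c_{K,2})$, then splitting on whether $\chi$ is unramified or weakly ramified via Remark \ref{H0=H1}) match the paper exactly. The one place where you take a genuinely different route is the key identity $\tau_K(\phi\chi)=\phi(\pi^{-2-s})\tau_K(\chi)$ in the weakly ramified case: the paper obtains it indirectly, by passing through the local root number via Formula (\ref{rootnumber}), $\tau_K(\chi)=p^dW(\chi^{-1})$, and then invoking Tate's formula for the behaviour of $W$ under twisting by an unramified character, whereas you extract the factor $\phi(\pi^{-s-2})$ directly from the defining sum, using that $\f(\phi\chi)=\f(\chi)=\pi^2\bo_K$ so both sums run over the same representatives of $U_K^0/U_K^2$ and that $\phi$ kills every such representative. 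Your computation is more elementary and self-contained (no appeal to \cite{tate2} is needed for this step), at the cost of having to justify that the defining sum is well defined on the common quotient $U_K^0/U_K^2$ --- which is standard and which you correctly flag as the only point needing care. The paper's detour through $W$ is slightly less direct here but fits its broader reliance on \cite{tate2}, which it reuses immediately afterwards in Proposition \ref{tau}.
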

\begin{proof}
One has $\tau_K(\phi)=\phi(\pi^{-s})$, $y_{K,2}(\phi)=-\phi(\pi^{2})$ and
  $\phi(c_{K,2})=\phi(\pi^{2+s})$, hence the result for
  $\tau_K^\star(\phi)$. The result for $\tau_K^\star(\phi\chi)$
  follows when $\chi$ is unramified. Suppose $\chi$ is ramified, thus
  $\f(\chi)=\f(\phi\chi)=\pi^2\bo_K$ and we get, using Formula
  (\ref{rootnumber}) above together with \cite[\S1 Coro. 2]{tate2}:
$$\tau_K(\phi\chi)=p^dW(\phi^{-1}\chi^{-1})=p^d\phi^{-1}(\pi^{2+s})W(\chi^{-1})=\phi(\pi^{-2-s})\tau_K(\chi)\enspace.$$
Further, $y_{K,2}(\chi\phi)=1=y_{K,2}(\chi)$ and
  $\chi\phi(c_{K,2})=\phi(\pi^{2+s})\chi(c_{K,2})$, hence the result
for $\tau_K^\star(\phi\chi)$. 
\end{proof}
More generally, when $L/K$ is abelian and either tamely or wildly and
weakly ramified, and $\phi\in R_H$ is unramified, one checks from
Definition \ref{def:modif}, using (\ref{unram1}), that
$\tau_K^\star(\phi)=(-1)^{\deg(\phi)}$. 
\subsubsection{Twisting.}\label{subsub:twist}
The last step to build the morphism $f$ defined in Subsection
\ref{representative_subsection} is to twist the modified Galois Gauss
sum by $\psi_2$, the second Adams operation, which is the endomorphism
of $R_G$ defined by $\psi_2(\chi)(g)=\chi(g^2)$ for $\chi\in R_G$ and
$g\in G$. The properties of $\psi_2$, together with Lemma
\ref{modifOK} above, enable us to show that $f$ is, as claimed, a
representative of $(\A)$.  
\begin{proof}[Proof of Proposition \ref{representative}]%
One only has to check that the quotient of $f$ by Erez's
representative $v_{N/F}$ \cite[Theorem 3.6]{Erez2} lies in the
denominator of the Hom-description. Choosing the same semi-local
normal basis generators $\beta_\wp$ to define $f$ and $v_{N/F}$ ---
the change of semi-local generator lies in the denominator of the
Hom-description, see \cite[Coro. to Prop. I.4.2]{Frohlich-Alg_numb}
--- this quotient is the global valued morphism on $R_G$ given by:
$$\frac{f}{v_{N/F}}
=\prod_{\ell\in S}\Ind_{G(\ell)}^G\left(
\frac{\tau_{F_\ell}^\star/\tau_{F_\ell}}{\Psi_2(\tau_{F_\ell}^\star/\tau_{F_\ell})}
\right)\enspace,$$
where $\Psi_2$ is the second Adams operator, defined by
$\Psi_2(v)(\chi)=v(\psi_2(\chi))$ if $v$ a character function. Fix an
$\ell\in S$. We know by Lemma \ref{modifOK} that
$\tau_{F_\ell}^\star/\tau_{F_\ell}\in
\Hom_{\Omega_\Q}(R_{G(\ell)},E^\times)\Det\big(G(\ell)\big)$. By
\cite[(2-7)]{cnt-adams}, $\Psi_2$ preserves
$\Det(\Z_l[G(\ell)]^\times)$ for every rational prime $l$, hence 
$\Psi_2\big(\Det\big(G(\ell)\big)\big)\subset\Det(\Z[G(\ell)]^\times)$;
further $\psi_2$ commutes with the action of $\Omega_\Q$ on
$R_{G(\ell)}$, see \cite[Prop.-Def. 3.5]{Erez2}, hence $\Psi_2$
preserves $\Hom_{\Omega_\Q}(R_{G(\ell)},E^\times)$. Applying
\cite[Theorem 12]{Frohlich-Alg_numb}, we see that $f/v_{N/F}$
belongs to $\Hom_{\Omega_\Q}(R_G,E^\times)\Det(\U(\Z[G]))$ as
required. 
\end{proof}
\subsubsection{Alternative expressions of the twisted modified Galois
  Gauss sum.}
We get back to the previous setting, so $L/K$ is an abelian weakly
ramified extension of $p$-adic fields, of Galois group $H$. The
abelian hypothesis yields that if $\chi\in\wh H$, then
$\psi_2(\chi)=\chi^2$, hence  
$\tau_K^\star(\chi-\psi_2(\chi))=\tau_K^\star(\chi)/\tau_K^\star(\chi^2)$.
If further $\chi$ is weakly ramified, then
\begin{equation}\label{tggs}
\tau_K^\star(\chi-\psi_2(\chi))=\chi(c_{K,2})^{-1}\tau_K(\chi-\chi^2)\enspace.
\end{equation}
Note also that, for any $\chi\in R_H$, $\psi_2(\chi)$ has the same
degree as $\chi$. If $\chi$ is unramified, it follows that
\begin{equation}\label{unram}
\tau_K^\star(\chi-\psi_2(\chi))=1\enspace.
\end{equation}

Recall that $\psi_K$ is the standard additive character of $K$ defined
in Subsection \ref{ab_GGsum} (and beware that $\psi_2$ and $\psi_K$ are
very different functions). 
\begin{proposition}\label{tau}
Suppose $\chi\in\wh H$ is weakly ramified, then there exists $c_\chi\in K$ such that:
\begin{equation}\label{conditions}
c_\chi\,\bo_K=\f(\chi)\D_K\quad\mbox{ and }\quad\forall
y\in\pi\bo_K\,,\ {\chi(1+y)}^{-1}=\psi_K(c_\chi^{-1}y)\enspace;
\end{equation}
further for any such $c_\chi$:
$$\tau_K^\star(\chi-{\chi}^2)=\chi\big(\textstyle\frac{c_\chi}{4c_{K,2}}\big)\psi_K(c_\chi^{-1})^{-1}\enspace.$$
\end{proposition}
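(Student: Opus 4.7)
My plan is to reduce the computation to a classical Gauss sum via a parameterization of $U_K^0/U_K^2$ in which the defining relation for $c_\chi$ converts the multiplicative character $\chi$ into $\psi_K$-values, turning the inner sum into a character-orthogonality argument.

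\textbf{Step 1 (Existence of $c_\chi$).} Since $\f(\chi)=\pi^2\bo_K$ and $(1+y)(1+y')\equiv 1+y+y'\pmod{\pi^2\bo_K}$ for $y,y'\in\pi\bo_K$, the map $y\mapsto\chi(1+y)^{-1}$ descends to a character of the finite additive group $\pi\bo_K/\pi^2\bo_K$. Local self-duality of $K$ via $\psi_K$ then yields $c_\chi\in K$ with $\chi(1+y)^{-1}=\psi_K(c_\chi^{-1}y)$; comparing kernels (the LHS kernel is $\pi^2\bo_K$) forces $c_\chi\bo_K=\pi^2\D_K=\f(\chi)\D_K$. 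Two valid choices of $c_\chi$ differ multiplicatively by an element of $U_K^1$, and a direct compensation between the $\chi(c_\chi)$ and $\psi_K(c_\chi^{-1})$ factors shows the claimed RHS is unaffected.

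\textbf{Step 2 (Ratio of Gauss sums).} Via $(\ref{tggs})$, together with $\psi_2(\chi)=\chi^2$ for abelian $\chi$, the identity to prove reduces to
$$\frac{\tau_K(\chi)}{\tau_K(\chi^2)}=\chi(c_\chi/4)\,\psi_K(-c_\chi^{-1})\enspace.$$
I parameterize representatives of $U_K^0/U_K^2$ as $z=\omega(1+\pi u\omega^{-1})$, where $\omega$ runs over lifts of $(\bo_K/\pi\bo_K)^\times$ and $u$ over $\bo_K/\pi\bo_K$. The defining relation of $c_\chi$ then gives $\chi(z)=\chi(\omega)\psi_K(-c_\chi^{-1}\pi u)$, and the Gauss sum factors as a double sum whose inner sum over $u$ is
$$\sum_u\psi_K\bigl(u(\pi^{-s-1}-c_\chi^{-1}\pi\omega^{-1})\bigr)\enspace,$$
which by orthogonality vanishes unless $\omega\equiv\pi^{s+2}c_\chi^{-1}\pmod\pi$, selecting a unique representative $\omega_0$ and producing a factor $q$. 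The parallel calculation for $\chi^2$ uses $c_{\chi^2}=c_\chi/2$ (valid since $p$ is odd, derived from $\chi^2(1+y)=\chi(1+y)^2=\psi_K(-2c_\chi^{-1}y)$), so the corresponding representative may be chosen as $\omega_0'=2\omega_0$.

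\textbf{Step 3 (Final collapse).} Taking the ratio of the two closed forms cancels $q$ and yields
$$\frac{\tau_K(\chi)}{\tau_K(\chi^2)}=\chi(4)^{-1}\,\chi(\pi^{s+2}\omega_0^{-1})\,\psi_K(-\omega_0\pi^{-s-2})\enspace.$$
The congruence $\omega_0c_\chi\pi^{-s-2}\equiv 1\pmod\pi$ lets me write $\pi^{s+2}\omega_0^{-1}=c_\chi(1+\pi w)$ for an explicit $w\in\bo_K$; applying the defining relation to $\chi(1+\pi w)$ converts it to a $\psi_K$-value, and the algebraic identity $c_\chi^{-1}(\omega_0c_\chi\pi^{-s-2})=\omega_0\pi^{-s-2}$ collapses all remaining $\psi_K$-exponents modulo $\D_K^{-1}$ to $-c_\chi^{-1}$. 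Reinstating the prefactor $\chi(c_{K,2})^{-1}$ from $(\ref{tggs})$ then yields the proposition. The main difficulty here is purely bookkeeping: I must track the unit-level congruences between $c_\chi$, $\omega_0$, $c_{K,2}$ and the chosen transversals precisely enough that the final $\psi_K$-exponent really reduces to $-c_\chi^{-1}$, not merely to something in $-c_\chi^{-1}+\pi^{-s-1}\bo_K$.
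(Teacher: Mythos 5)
Your argument is correct, but it takes a more self-contained route than the paper. The paper's proof of this proposition is essentially a citation: it invokes Proposition 1 of \cite[\S 1]{tate2} (applied to $\alpha=\chi^{-1}$ and ${\mathfrak a}=\pi\bo_K$, together with the root-number identity (\ref{rootnumber})) to obtain in one stroke both the existence of $c_\chi$ satisfying (\ref{conditions}) and the closed form $\tau_K(\chi)=p^d\chi(c_\chi^{-1})\psi_K(c_\chi^{-1})$; it then observes that $c_\chi$ satisfies (\ref{conditions}) for $\chi$ if and only if $c_\chi/2$ satisfies them for $\chi^2$, and concludes via (\ref{tggs}). You instead re-derive Tate's local-constant formula from scratch: existence of $c_\chi$ by additive self-duality of $K$ applied to the character $y\mapsto\chi(1+y)^{-1}$ of $\pi\bo_K/\pi^2\bo_K$, and the evaluation of $\tau_K(\chi)$ by splitting $U_K^0/U_K^2$ into cosets of $U_K^1/U_K^2$ and killing the inner sum by orthogonality, which isolates the class $\omega_0\equiv\pi^{s+2}c_\chi^{-1}\bmod\pi$ and gives $\tau_K(\chi)=q\,\chi(\pi^{-s-2}\omega_0)\psi_K(\omega_0\pi^{-s-2})=q\,\chi(c_\chi^{-1})\psi_K(c_\chi^{-1})$; from there the two proofs coincide ($c_{\chi^2}=c_\chi/2$ plus (\ref{tggs})). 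What your version buys is independence from \cite{tate2} and an explicit verification that the right-hand side is insensitive to the ambiguity $c_\chi\mapsto c_\chi u$ with $u\in U_K^1$, which the paper leaves implicit in the phrase ``for any such $c_\chi$''; the cost is the unit-level bookkeeping you flag at the end, which does work out since $\omega_0\pi^{-s-2}=c_\chi^{-1}(1+\pi w)$ makes the $\chi(1+\pi w)$ and $\psi_K(c_\chi^{-1}\pi w)$ contributions cancel exactly by the defining relation. One typographical slip: in Step 2 the displayed relation should read $\chi(z)=\chi(\omega)\psi_K(-c_\chi^{-1}\pi u\omega^{-1})$; the missing $\omega^{-1}$ reappears correctly in your inner sum, so nothing downstream is affected.
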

\begin{proof}
This result is a direct consequence of \cite[\S1]{tate2}. Using
Formula (\ref{rootnumber}) and applying Proposition 1
of \cite[\S 1]{tate2} to $\alpha={\chi}^{-1}$ and ${\mathfrak
  a}=\pi\bo_K$, we get the existence of $c_\chi\in K$ satisfying conditions
(\ref{conditions}) above 
and that, for any such $c_\chi$: 
$$\tau_K(\chi)=p^d\chi(c_\chi^{-1})\psi_K(c_\chi^{-1})\enspace.$$
It only remains to notice that $c_\chi$ satisfies conditions (\ref{conditions}) for $\chi$
if and only if $c_\chi/2$ satisfies them for ${\chi}^2$, to get the
result through Formula (\ref{tggs}).
\end{proof}
We deduce the result which will be required later.
\begin{corollary}\label{tauQ}
Suppose $K/\Q_p$ is unramified and set
$v_{K,2}=p^2/c_{K,2}\in\bo_K^\times$. Suppose $\chi\in\wh H$ is weakly
ramified, then there exists $v_\chi\in\bo_K^\times$ such that $\forall
u\in\bo_K$, ${\chi(1+up)}^{-1}=\zeta^{Tr(uv_\chi)}$; under this condition,
one has:
$$\tau_K^\star(\chi-{\chi}^2)=\chi\big(\textstyle\frac{v_{K,2}}{4v_{\chi}}\big)\xi^{-Tr(v_\chi)}\enspace.$$
Further:
\begin{enumerate}[(i)]
\item $v_\chi$ is uniquely defined modulo $p$, but $v_\chi+ap$ also
  satisfies the above condition for any $a\in\bo_K$; 
\item if $j\in\{1,\ldots,p-1\}$, $v_{\chi^j}$ can be chosen equal to
  $jv_\chi$.
\end{enumerate}
\end{corollary}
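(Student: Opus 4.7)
The plan is to apply Proposition \ref{tau} directly and then translate everything into the cleaner notation made available by the hypothesis that $K/\Q_p$ is unramified. First I would apply Proposition \ref{tau} to obtain an element $c_\chi \in K$ satisfying the two conditions (\ref{conditions}), together with the formula for $\tau_K^\star(\chi-\chi^2)$. Because $K/\Q_p$ is unramified we have $\D_K = \bo_K$ (so the exponent $s$ equals $0$) and we may take $\pi = p$. Since $\chi$ is weakly ramified, $\f(\chi) = p^2\bo_K$, hence $c_\chi\bo_K = p^2\bo_K$ and the element $v_\chi := p^2/c_\chi$ lies in $\bo_K^\times$.

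The next step is the translation. Writing any $y \in p\bo_K$ as $y = up$ with $u\in\bo_K$, the second condition in (\ref{conditions}) reads $\chi(1+up)^{-1} = \psi_K(u v_\chi /p)$. Since $\psi_K = \psi_p \circ Tr_{K/\Q_p}$ and $\psi_p(a/p) = \zeta^a$ for $a\in\Z_p$ (recall $\zeta = \xi_1$ and $\psi_p(\Z_p)=1$), this becomes $\chi(1+up)^{-1} = \zeta^{Tr(u v_\chi)}$, as required. For the Gauss sum formula, $c_\chi/(4c_{K,2}) = p^2/(4v_\chi c_{K,2}) = v_{K,2}/(4v_\chi)$ by the very definition of $v_{K,2}$, and $\psi_K(c_\chi^{-1}) = \psi_p(Tr(v_\chi)/p^2) = \xi^{Tr(v_\chi)}$ because $\psi_p(1/p^2) = \xi_2 = \xi$. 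Substituting into the expression of Proposition \ref{tau} gives the claimed formula for $\tau_K^\star(\chi-\chi^2)$.

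For assertion \textit{(i)}, if $v_\chi$ and $v_\chi'$ both satisfy the defining condition, then $\zeta^{Tr(u(v_\chi - v_\chi'))} = 1$ for every $u \in \bo_K$, which forces $Tr(u(v_\chi - v_\chi')) \in p\Z_p$ for every $u$. Since $K/\Q_p$ is unramified, the reduction of $Tr_{K/\Q_p}$ modulo $p$ is the trace of the residual extension, and the trace pairing on the residue field is non-degenerate; hence $v_\chi \equiv v_\chi' \pmod p$. Conversely, replacing $v_\chi$ by $v_\chi + ap$ multiplies $\zeta^{Tr(uv_\chi)}$ by $\zeta^{p\, Tr(ua)} = 1$, so the condition is preserved.

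Assertion \textit{(ii)} is immediate from multiplicativity: raising the defining identity for $\chi$ to the $j$-th power gives $\chi^j(1+up)^{-1} = \zeta^{j\,Tr(uv_\chi)} = \zeta^{Tr(u\cdot j v_\chi)}$, and for $j \in \{1,\ldots,p-1\}$ the integer $j$ is a unit in $\bo_K$, so $jv_\chi \in \bo_K^\times$ is a valid choice for $v_{\chi^j}$. There is no serious obstacle here; the only subtle point is keeping track of the conventions for $\psi_p$ and $\xi_n$, so the main care must be taken in verifying the two index computations ($p/p^2 = 1/p$ giving $\zeta$, and $1/p^2$ giving $\xi$) and in justifying uniqueness mod $p$ via the non-degeneracy of the residual trace in the unramified case.
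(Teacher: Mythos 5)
Your proof is correct and follows essentially the same route as the paper's: apply Proposition \ref{tau}, set $v_\chi=p^2/c_\chi$, unwind the definitions of $\psi_K$ and $\psi_p$ to get the $\zeta$- and $\xi$-exponents, and use the non-degeneracy of the residual trace form for uniqueness modulo $p$. The only difference is that you spell out the verification of \textsl{(i)} and \textsl{(ii)}, which the paper leaves as "readily checked."
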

\begin{proof}
Since $K/\Q_p$ is unramified, $p$ is a uniformising parameter of $K$
 and $\f(\chi)\D_K=\f(\chi)=p^2\Z_p$. Let $c_\chi\in K$ satisfying conditions
(\ref{conditions}) for $\chi$ and set $v_\chi=p^2/c_\chi$, then
$v_\chi\in\bo_K^\times$ and for all $u\in\bo_K$ one has
${\chi(1+up)}^{-1}=\psi_K(uv_\chi/p)=\zeta^{Tr(uv_\chi)}$. Conversely,
if $v_\chi$ satisfies these conditions, then $c_\chi=p^2/v_\chi$ satisfies conditions 
(\ref{conditions}) and the formula given in Proposition \ref{tau}
yields the formula for the Galois Gauss sum. 

Let $k$ denote the residue field of $K$.
The trace form $T(x,y)=Tr_{k/(\Z_p/p\Z_p)}(xy)$ is a non degenerate
symmetric bilinear form from $k\times k$ to $\Z_p/p\Z_p$, hence induces
an isomorphism $y\mapsto T(\,.\,,y)$ between $k$ and its dual. If
$v_\chi\in\bo_K^\times$ satisfies the condition of the Corollary, then
$T(\,.\,,v_\chi\mod p)$ is given by this condition, hence $v_\chi\mod p$ is unique.
The two last assertions are readily checked.
\end{proof}
\begin{remark}
Under the hypothesis of Corollary \ref{tauQ} we get:
$$\tau_K^\star\big(\chi-{\chi}^2\big)^p=\zeta^{-Tr(v_\chi)}=\chi(1+p)$$
so the modified twisted Galois Gauss sum is a $p$-th root of unity if and only if
$\chi(1+p)$ is trivial, namely when $\theta_{L/K}(1+p)$ belongs to
$\ker(\chi)$. If $K/\Q_p$ is ramified, we get by Proposition \ref{tau} that
$\chi(1+p)^{-1}=\psi_K(c^{-1}p)=\psi_K(c^{-1})^p$, therefore we have
$$\tau_K^\star(\chi-{\chi}^2)^p=\chi(1+p)=1$$
since $p\in\pi^2\bo_K$. The modified abelian weakly ramified twisted Galois
Gauss sum is thus always a $p$-th root of unity in the ramified base field case.
\end{remark}
\subsection{Reduction of the problem}\label{reduction}
Recall that $p$ is a rational prime dividing the order of $G$ (in
particular $p\not=2$ since $[N:F]$ is odd); $\QQ$ is a prime ideal of 
$\bo_E$ above $p$; $\wp$ is a prime ideal of $\bo$ above $p$ which is
wildly ramified in $N/F$. In Subsection \ref{ggs} we have fixed a
prime ideal $\PP$ of $E$ above $\wp$ and denoted by $\iota_{\PP/\wp}$
the corresponding embedding of $E$ into $\Q_p^c$ fixing $\wp$; by
$G(\wp)$ the image in $G$ of $\Gal(N_\p/F_\wp)$, where $\p=\PP\cap N$,
by the homomorphism induced by $\iota_{\PP/\wp}$. 
We introduce the following useful notations. 
\begin{notation}
If $K$ is a finite extension of $\Q_p$ and
$L/K$ is weakly ramified with Galois group $H$,
we let $\alpha_L$ denote a normal basis generator of the square
root of the inverse different of $L/K$ and $\RT_K(L)$ the morphism
from $R_{H,p}$ to $E_\QQ^\times$ given by
$$\RT_K(L)(\chi)=\N_{K/\Q_p}(\alpha_L\,|\,\chi)\T_{K}^\star(\chi)\enspace.$$
We will sometimes write $\tau_K^\star(\chi-\chi^2)$ instead of
$\T_K^\star(\chi)$, when the context makes it clear what we mean.
\end{notation}
We are thus interested in computing $\RT_{F_\wp}(N_\p)=\R_\wp\T_\wp^\star$.

By the hypothesis in Theorem \ref{THM}, $F_\wp$ is unramified over $\Q_p$
and $N_\p/F_\wp$ is abelian, wildly and weakly ramified, with cyclic
ramification group. We are therefore in a position to apply Corollary
\ref{MLun} with $K=F_\wp$, uniformising parameter $\pi=p$, and
$L=N_\p$. Recall that $M_{p,2}$ has $r=\frac{p^d-1}{p-1}$ subextensions
$M_i$, $1\le i\le r$, of degree $p$ over $K$. We know that there exist
an unramified extension $L^{un}/F_\wp$ and an integer $1\le i\le r$, such that:
$$N_\p\subseteq M_iL^{un}\ \mbox{ and }\ M_iL^{un}/N_\p\mbox{ is unramified.}$$
\begin{proposition}\label{reduction_prop}
Let $C_i=\Gal(M_i/F_\wp)$ then: 
$$\RT_{F_\wp}(M_i)\in\Det(\bo_{E_\QQ^1}[C_i]^\times)\Longrightarrow
\RT_{F_\wp}(N_\p)\,\in\,\Det(\bo_{E_\QQ^1}[G(\wp)]^\times)\enspace.$$ 
\end{proposition}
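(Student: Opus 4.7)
The plan is to pass up to the compositum $M' := M_i L^{un}$, establish the Det-containment there, and then descend along inflation of characters to $N_\p$, exploiting the fact that $M'/N_\p$ is unramified. Applying Corollary \ref{MLun} to $N_\p/F_\wp$, I choose an unramified extension $L^{un}/F_\wp$ with $N_\p \subseteq M' := M_i L^{un}$ and $M'/N_\p$ unramified. Since $M_i/F_\wp$ is totally ramified while $L^{un}/F_\wp$ is unramified, the two are linearly disjoint over $F_\wp$, so $\Gal(M'/F_\wp) \cong C_i \times U$ with $U := \Gal(L^{un}/F_\wp)$, and the inertia subgroup is $C_i \times \{1\}$. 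Setting $H := \Gal(M'/N_\p)$ gives $G(\wp) = \Gal(M'/F_\wp)/H$, with $H \cap (C_i \times \{1\}) = 1$ by unramifiedness of $M'/N_\p$.

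First I show $\RT_{F_\wp}(M') \in \Det(\bo_{E_\QQ^1}[\Gal(M'/F_\wp)]^\times)$. Any irreducible character $\chi$ of $\Gal(M'/F_\wp)$ factors as $\chi = \chi_C\chi_U$ with $\chi_C$ a character of $C_i$ and $\chi_U$ an unramified character of $U$, and since the setting is abelian, $\psi_2(\chi) = \chi^2$. Applying Proposition \ref{modif-unr} to both $\chi$ and $\chi^2$ (and invoking equation (\ref{unram}) when $\chi_C$ is trivial) yields $\T_{F_\wp}^\star(\chi) = \T_{F_\wp}^\star(\chi_C)$. For the norm-resolvent, I fix a normal basis generator $\beta$ of $\bo_{L^{un}}$ over $\bo_{F_\wp}[U]$ (available since $L^{un}/F_\wp$ is unramified) and set $\alpha_{M'} := \alpha_{M_i}\beta$, a normal basis generator of $\A_{M'/F_\wp} = \A_{M_i/F_\wp}\bo_{M'}$ over $\bo_{F_\wp}[\Gal(M'/F_\wp)]$. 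The product structure splits the resolvent as $(\alpha_{M'}\mid\chi) = (\alpha_{M_i}\mid\chi_C)_{C_i}(\beta\mid\chi_U)_U$, and similarly for the norm-resolvent. The $\chi_U$-factor (with trivial $\T_{F_\wp}^\star$-twist) lies in $\Det(\bo_{E_\QQ^0}[U]^\times)$ by the unramified case of Lemma \ref{3outof4}(ii), while the $\chi_C$-factor equals $\RT_{F_\wp}(M_i)$ at $\chi_C$, which lies in $\Det(\bo_{E_\QQ^1}[C_i]^\times)$ by hypothesis. Functoriality of $\Det$ with respect to the direct product $C_i \times U$ then gives the claim.

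For the descent, any $\theta \in R_{G(\wp),p}$ inflates to $\tilde\theta \in R_{\Gal(M'/F_\wp),p}$, and Galois Gauss sums are inflation-invariant, so $\T_{F_\wp}^\star(\tilde\theta) = \T_{F_\wp}^\star(\theta)$. Since $M'/N_\p$ is unramified and $\A_{M'/F_\wp} = \A_{N_\p/F_\wp}\bo_{M'}$, one can alternatively write $\alpha_{M'} = \alpha_{N_\p}\beta'$, where $\beta'$ is a normal basis generator of $\bo_{M'}$ over $\bo_{N_\p}[H]$ normalised by $\text{Tr}_{M'/N_\p}(\beta') = 1$ (available by Noether's theorem for unramified extensions); the passage from the Step 1 choice multiplies by a unit of $\bo_{F_\wp}[\Gal(M'/F_\wp)]^\times$, which is absorbed in the Det group. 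Because $h \in H$ fixes $\alpha_{N_\p} \in N_\p$, one has $\sum_{h\in H}(\alpha_{N_\p}\beta')^{gh} = \alpha_{N_\p}^g \cdot \text{Tr}_{M'/N_\p}(\beta')^g = \alpha_{N_\p}^g$ for any $g$, yielding $(\alpha_{M'}\mid\tilde\theta) = (\alpha_{N_\p}\mid\theta)$ and, by the same computation for each Galois conjugate, equality of norm-resolvents. Hence $\RT_{F_\wp}(N_\p)$ is the restriction of $\RT_{F_\wp}(M')$ along the inflation embedding $R_{G(\wp),p} \hookrightarrow R_{\Gal(M'/F_\wp),p}$. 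The natural surjection $\bo_{E_\QQ^1}[\Gal(M'/F_\wp)] \twoheadrightarrow \bo_{E_\QQ^1}[G(\wp)]$ carries units to units and intertwines $\Det$ with inflation of characters, so the Step 1 containment descends to $\RT_{F_\wp}(N_\p) \in \Det(\bo_{E_\QQ^1}[G(\wp)]^\times)$.

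The delicate point is the compatibility of the normal basis generators: one must verify that $\beta$ (in Step 1) and $\beta'$ (in Step 2) can be chosen so that the two resulting expressions for $\alpha_{M'}$ differ by a unit of $\bo_{F_\wp}[\Gal(M'/F_\wp)]^\times$, and more subtly that $\beta'$ can be normalised so that $\text{Tr}_{M'/N_\p}(\beta') = 1$. Both points reduce to the normal basis theorem for the unramified extension $M'/N_\p$, but the interaction with the integral structure over $\bo_{E_\QQ^1}$ must be tracked carefully. The compatibility of $\psi_2$ with the product decomposition $\chi = \chi_C\chi_U$ is immediate from abelianness.
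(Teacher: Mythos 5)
Your proof follows the same two-step architecture as the paper's: pass up to the compositum $L'=M_iL^{un}$, then descend to $N_\p$ by inflation. Your Step 1 is essentially the paper's Lemma \ref{step2}: the product decomposition $\Gal(L'/F_\wp)=C_i\times U$, the identification $\A_{L'/F_\wp}=\A_{M_i/F_\wp}\otimes_{\bo_{F_\wp}}\bo_{L^{un}}$, the splitting of the resolvent, and Proposition \ref{modif-unr} for the Gauss sum all appear there. (Two small points: the paper handles the unramified resolvent factor via \cite[Prop.~I.4.3]{Frohlich-Alg_numb} rather than Lemma \ref{3outof4}, and it is explicit that the transversal $\Omega$ must be chosen with $\Omega|_{L^{un}}\subset\Gal(L^{un}/\Q_p)$ so that the \emph{norm}-resolvent, not just the resolvent, splits --- a point you elide with ``and similarly for the norm-resolvent''.)

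The genuine gap is in Step 2, at the factorisation $\alpha_{L'}=\alpha_{N_\p}\beta'\cdot(\mbox{unit})$ with $Tr_{L'/N_\p}(\beta')=1$. To use this you need $\alpha_{N_\p}\beta'$ to be a free generator of $\A_{L'/F_\wp}$ over $\bo_{F_\wp}[\Gal(L'/F_\wp)]$, i.e.\ a transitivity-in-towers statement for normal integral bases. The inclusion $\bo_{F_\wp}[\Gal(L'/F_\wp)](\alpha_{N_\p}\beta')\subseteq\A_{L'/F_\wp}$ is clear, but equality is not automatic --- $\Gal(L'/F_\wp)$ is not a product of $G(\wp)$ and $H=\Gal(L'/N_\p)$, so the clean tensor argument of Step 1 does not apply --- and you flag this as ``delicate'' without resolving it. The paper's Lemma \ref{coinf} sidesteps the issue entirely: it applies the identity $(\alpha_{L'}\,|\,\inf\theta)_{\Gal(L'/F_\wp)}=\big(Tr_{L'/N_\p}(\alpha_{L'})\,|\,\theta\big)_{G(\wp)}$ (\cite[III, Lemma~1.5]{Frohlich-Alg_numb}, valid for \emph{any} element) directly to the actual generator $\alpha_{L'}$, and then notes that $Tr_{L'/N_\p}(\alpha_{L'})$ is \emph{automatically} a free generator of $\A_{N_\p/F_\wp}$, because $\A_{N_\p/F_\wp}=Tr_{L'/N_\p}(\A_{L'/F_\wp})$ and the trace is equivariant for the quotient action; hence it equals $u\,\alpha_{N_\p}$ for some $u\in\bo_{F_\wp}[G(\wp)]^\times$. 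Replacing your factorisation by this trace argument repairs the proof; the remaining ingredients (inflation-invariance of the modified twisted Gauss sum, including the $c_{K,2}$-factor, and the fact that co-inflation preserves Det groups, \cite[Theorem~12(ii)]{Frohlich-Alg_numb}) match the paper.
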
 
Before giving the proof, we describe these extensions in a diagram in
which we introduce
some notations for the Galois groups involved.
$$\xymatrix@=40pt@C40pt{
M_{p,2}\ar@{-}@/0pc/[dr] && L'=M_iL^{un}\ar@{-}@/0pc/[dr]\ar@{-}@/0pc/[dl] &\\
& M_i\ar@{-}@/0pc/[dr]\ar@{-}@/l1.5pc/[dr]_{C_i} & N_\p \ar@{-}@/0pc/[u] & L^{un}\ar@{-}@/0pc/[dl] \\
&& F_\wp \ar@{-}@/0pc/[u]\ar@/^.5pc/@{-}[u]^{G(\wp)}\ar@/_1.5pc/@{-}[uu]_{G'}\ar@/_1pc/@{-}[ur]_{C}& \\
}$$
\begin{proof}
There will be two steps in the reduction process: from $M_i$ to
$L'=M_iL^{un}$ and from $L'$ to $N_\p$. For the first step, we shall
take advantage of the fact that $L'/F_\wp$ is the compositum of the
totally ramified extension $M_i/F_\wp$ and the unramified extension
$L^{un}/F_\wp$.
\begin{lemma}\label{step2}
Let $K$ be a finite Galois extension of $\Q_p$ and $K^t$ be its maximal tame
extension in $\Q_p^c$. Let $L_u/K$ be an unramified
extension, $L_1/K$ be an abelian totally wildly and weakly ramified
extension and set $L_2=L_1L_u$, $G_1=\Gal(L_1/K)$ and $G_2=\Gal(L_2/K)$. 
Then: 
$$\RT_K(L_1)\in\Det(\bo_{K^t}[G_1]^\times)\Longrightarrow
\RT_K(L_2)\in\Det(\bo_{K^t}[G_2]^\times)\enspace.$$
\end{lemma}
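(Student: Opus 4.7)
The plan is to exploit the product structure $G_2\cong G_1\times H$ with $H=\Gal(L_u/K)$, which follows from the linear disjointness of $L_1$ and $L_u$ over $K$ (a totally ramified and an unramified extension). Since $L_u/K$ is unramified, its different is trivial, and multiplicativity of the different in the tower $K\subseteq L_u\subseteq L_2$ gives $\D_{L_2/K}=\D_{L_1/K}\bo_{L_2}$, hence $\A_{L_2/K}=\A_{L_1/K}\otimes_{\bo_K}\bo_{L_u}$ as $\bo_K[G_2]$-modules. Choosing $\alpha_{L_u}$ to be a normal integral basis generator of $\bo_{L_u}/\bo_K[H]$ (which exists by the classical result for unramified local extensions), the product $\alpha_{L_2}:=\alpha_{L_1}\alpha_{L_u}$ generates $\A_{L_2/K}$ over $\bo_K[G_2]$. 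This is a legitimate choice since any two such generators differ by a unit in $\bo_K[G_2]^\times$, whose Det lies in the denominator of the Hom-description.

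The second step is to establish the multiplicativity of the three ingredients of $\RT_K$ on an irreducible character $\chi_1\phi\in\wh G_2$ with $\chi_1\in\wh G_1$, $\phi\in\wh H$. The abelian resolvent formula of Subsection \ref{defs} gives at once
$(\alpha_{L_2}\,|\,\chi_1\phi)_{G_2}=(\alpha_{L_1}\,|\,\chi_1)_{G_1}(\alpha_{L_u}\,|\,\phi)_H$, and since the $\Omega_{\Q_p}$-action factors as $(\chi_1\phi)^{\omega^{-1}}=\chi_1^{\omega^{-1}}\phi^{\omega^{-1}}$, this upgrades to norm-resolvents:
\[
\N_{K/\Q_p}(\alpha_{L_2}\,|\,\chi_1\phi)=\N_{K/\Q_p}(\alpha_{L_1}\,|\,\chi_1)\cdot\N_{K/\Q_p}(\alpha_{L_u}\,|\,\phi).
\]
For the twisted modified Gauss sum, one notes that $L_2/K$ is itself abelian, wildly and weakly ramified (its inertia is $G_1\neq 1$), and that the inflation of $\phi$ to $G_2$ is unramified (trivial on the inertia $G_1$). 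Applying Proposition \ref{modif-unr} to the pairs $(\chi_1,\phi)$ and $(\chi_1^2,\phi^2)$ yields $\tau_K^\star(\chi_1\phi)=\tau_K^\star(\chi_1)$ and $\tau_K^\star(\chi_1^2\phi^2)=\tau_K^\star(\chi_1^2)$, and since $\psi_2(\chi_1\phi)=\chi_1^2\phi^2$ in the abelian case we deduce $\T_K^\star(\chi_1\phi)=\T_K^\star(\chi_1)$.

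Combining the three factorisations yields, for every irreducible $\chi_1\phi\in\wh G_2$ and hence by linearity for all of $R_{G_2,p}$:
\[
\RT_K(L_2)(\chi_1\phi)=\RT_K(L_1)(\chi_1)\cdot\N_{K/\Q_p}(\alpha_{L_u}\,|\,\phi).
\]
The first factor lies in $\Det(\bo_{K^t}[G_1]^\times)$ by hypothesis; the second lies in $\Det(\bo_{K^t}[H]^\times)$ by the local analogue of Lemma \ref{3outof4}(ii) applied to the unramified extension $L_u/K$ (noting that on unramified characters the twisted modified Gauss sum is trivial, so only the norm-resolvent has to be controlled, and this follows from the classical Taylor-type argument for tame extensions). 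Through the natural inclusions $u\mapsto u\otimes 1$ and $v\mapsto 1\otimes v$ into $\bo_{K^t}[G_2]=\bo_{K^t}[G_1]\otimes_{\bo_{K^t}}\bo_{K^t}[H]$, and since all characters of the abelian group $G_2$ have degree one so that $\Det_{\chi_1\phi}(u_1\otimes u_2)=\Det_{\chi_1}(u_1)\Det_{\phi}(u_2)$, both factors are realised as elements of $\Det(\bo_{K^t}[G_2]^\times)$, and so is their product.

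The main obstacle is the careful verification of the multiplicativity formulas, and in particular the compatibility of the $\Omega_{\Q_p}$-equivariant norm-resolvent with the product decomposition of characters. The abelian hypothesis is essential: without it the tensor-product formula $\det(A\otimes B)=\det(A)^{\dim B}\det(B)^{\dim A}$ would introduce unwanted degree exponents preventing the clean identification inside $\Det(\bo_{K^t}[G_2]^\times)$.
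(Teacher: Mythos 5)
Your argument is correct and follows essentially the same route as the paper's proof: decompose $G_2=G_1\times\Gal(L_u/K)$, take the tensor-product generator of $\A_{L_2/K}=\A_{L_1/K}\otimes_{\bo_K}\bo_{L_u}$, factor the resolvent and norm-resolvent accordingly, and dispose of the unramified part of the Gauss sum via Proposition \ref{modif-unr}. The only (cosmetic) divergence is in the unramified factor: the paper writes $(\alpha_{L_u}\,|\,\phi)=\Det_\phi(B)$ with $B\in\bo_{L_u}[\Gal(L_u/K)]^\times$ by Fr\"ohlich's Proposition I.4.3 and, after choosing the transversal $\Omega$ to stabilise $L_u$, absorbs everything into a single unit of $\bo_{L_u}[G_2]^\times\subseteq\bo_{K^t}[G_2]^\times$, whereas you invoke the tame-case lemma for the norm-resolvent of $L_u/K$; both are valid.
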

Notice that the existence of a normal basis generator $\alpha_2$ for
$\A_2$ is ensured by \cite[\S 2 Theorem 1]{Erez2} and \cite[Proposition
  2.2(ii)]{Vinatier_jnumb}. 
\begin{proof}
Since $L_u/K$ is unramified, let us denote by $C$ its cyclic Galois group, 
  then $G_2=G_1\times C$ and any irreducible character $\chi_2$ of
  $G_2$ decomposes as a product $\chi_2=\chi_1\chi_C$, where $\chi_1$ (resp. $\chi_C$) is an
  irreducible character of $G_1$ (resp. $C$). Let $\beta$ denote a
  normal basis generator for $\bo_{L_u}=\A_{L_u/K}$ (over
  $\bo_K$). Since $L_1/K$ and $L_u/K$ are linearly disjoint, one has
  $\A_2=\A_1\otimes_{\bo_K}\bo_{L_u}$. This implies that
  $\alpha_1\otimes\beta$ is a normal basis generator for $\A_2$ over
  $\bo_K$, so there exists $u\in\bo_K[G_2]^\times$ such that
  $\alpha_2=(\alpha_1\otimes\beta)u$. By \cite[\S I, Coro. to
  Prop. 4.2]{Frohlich-Alg_numb}, this yields
\begin{equation}\label{resbasis}
\begin{split}
(\alpha_2\,|\,\chi_2) & = (\alpha_1\otimes\beta\,|\,\chi_2)\Det_{\chi_2}(u)\\
& = (\alpha_1\,|\,\chi_1)(\beta\,|\,\chi_C)\Det_{\chi_2}(u)\\
& = (\alpha_1\,|\,\chi_1)\Det_{\chi_C}(B)\Det_{\chi_2}(u)\enspace,
\end{split}
\end{equation}
where $B=\sum_{c\in C}c(\beta)c^{-1}\in\bo_{L_u}[C]^\times$ by
\cite[Proposition I.4.3]{Frohlich-Alg_numb}. Note that $\chi_C$ is the restriction
to $C$ of $\chi_2$ and that
$[\chi_C\mapsto\Det_{\chi_C}(B)]\,\in\,\Det(\bo_{L_u}[C]^\times)$, so
$[\chi_2\mapsto\Det_{\chi_C}(B)]\,\in\,\Det(\bo_{L_u}[G_2]^\times)$ by
the functorial properties of $\Det$. Consequently, let
$v\in\bo_{L_u}[G_2]^\times$ be such that
$(\alpha_2\,|\,\chi_2)=(\alpha_1\,|\,\chi_1)\Det_{\chi_2}(v)$. 
To compute the norm-resolvent $\N_{K/\Q_p}$, we need to 
choose a transversal $\Omega$ of $\Omega_{K}$ in $\Omega_{\Q_p}$. Since
$\Omega_{\Q_p}/\Omega_K=\Gal(L_u/\Q_p)\,/\,\Gal(L_u/K)$ (note that
$L_u/\Q_p$ is unramified hence Galois), we can choose 
$\Omega$ so that $\Omega|_{L_u}\subset\Gal(L_u/\Q_p)$.
With this choice, $v'=\prod_\Omega
v^\omega\in\bo_{L_u}[G_2]^\times$ is such that:
$$\N_{K/\Q_p}(\alpha_2\,|\,\chi_2)=\N_{K/\Q_p}(\alpha_1\,|\,\chi_1)\Det_{\chi_2}(v')\enspace.$$

We now consider the twisted modified Galois Gauss sum. Since $\chi_C$
is an unramified character, we know by Proposition \ref{modif-unr} that
$\tau_K^\star(\chi_2-\chi_2^2)=\tau_K^\star(\chi_1-\chi_1^2)$.
This yields:
$$\N_{K/\Q_p}(\alpha_2\,|\,\chi_2)\tau_K^\star(\chi_2-\chi_2^2)=\N_{K/\Q_p}(\alpha_1\,|\,\chi_1)\tau_K^\star(\chi_1-\chi_1^2)\Det_{\chi_2}(v')\enspace.$$
Suppose $\RT_K(L_1)\in\Det(\bo_{K^t}[G_1]^\times)$, then by induction of
character functions, the map
$\chi_2\mapsto\N_{K/\Q_p}(\alpha_1\,|\,\chi_1)\tau_K(\chi_1-\chi_1^2)$ 
belongs to $\Det(\bo_{K^t}[G_2]^\times)$, and the same holds for
$\chi_2\mapsto\Det_{\chi_2}(v')$, thus for $\RT_K(L_2)$. 
\end{proof}
Suppose $\RT_{F_\wp}(M_i)\in\Det(\bo_{E_\QQ^1}[C_i]^\times)$. 
We know that $L'=M_iL^{un}$ is unramified over $N_\p$ and that
$N_\p/F_\wp$ is weakly ramified, therefore $L'/F_\wp$ is weakly ramified.
We then apply Lemma \ref{step2} to $K=F_\wp$, $L_1=M_i$,
$L_u=L^{un}$, and get that $\RT_{F_\wp}(L')\in\Det(\bo_{E_\QQ^1}[G']^\times)$.

We consider the restriction of $F_\wp$-automorphisms of $L'$
to $N_\p$: $G'\twoheadrightarrow G(\wp)$. It induces an inflation map on
characters $\inf:R_{G(\wp),p}\rightarrow R_{G',p}$, which in
turn induces a co-inflation map on character functions
$${\coinf}=\coinf_{G(\wp)}^{G'}:\Hom_{\Omega_{\Q_p}}(R_{G',p},E_\QQ^\times)\rightarrow\Hom_{\Omega_{\Q_p}}(R_{G(\wp),p},E_\QQ^\times)\enspace,$$
and we know by \cite[Theorem 12 (ii)]{Frohlich-Alg_numb} that
\begin{equation}\label{coinf_formula}
\coinf\RT_{F_\wp}(L')\in\Det(\bo_{E_\QQ^1}[G(\wp)]^\times)\enspace.
\end{equation} 
We now show the following result.
\begin{lemma}\label{coinf}
Let $K$ be a finite Galois extension of $\Q_p$, $L_2/L_1/K$ be a tower of
abelian extensions such that $L_2/K$ is weakly ramified. Set
$G_1=\Gal(L_1/K)$ and $G_2=\Gal(L_2/K)$. Then
there exists $v\in\bo_K[G_1]^\times$ such that 
$$\coinf\RT_K(L_2)=\RT_K(L_1)\Det(v)\enspace.$$
\end{lemma}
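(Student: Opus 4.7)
The plan is to push $\coinf\RT_K(L_2)$ down to $\RT_K(L_1)$ via the trace map $Tr=Tr_{L_2/L_1}\colon L_2\to L_1$, showing separately how the two factors in $\RT_K$ transform under inflation.

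First I would check that the modified twisted Galois Gauss sum is inflation-invariant: for $\chi_1\in R_{G_1,p}$, $\T_K^\star(\inf\chi_1)=\T_K^\star(\chi_1)$. The key point is that $\tau_K(\chi)$, the conductor $\f(\chi)$, the non-ramified part $n_\chi$, the non-ramified characteristic $y_{K,i}$, and $\Dab_\chi(c_{K,i})$ only depend on $\chi$ through its associated character of $K^\times$ under the reciprocity map, which is preserved by inflation. By Remark~\ref{H0=H1}, $L_2/K$ being weakly ramified forces $L_1/K$ to be of the same type (both tame or both wild-and-weak, up to $L_1/K$ being unramified), so the same modification formula applies on both levels; and $\psi_2\circ\inf=\inf\circ\psi_2$ on abelian characters.

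Second, setting $H=\Gal(L_2/L_1)$ and decomposing $G_2=\bigsqcup_{g_1\in G_1}\tilde g_1H$, a direct computation using that $Tr(\alpha_{L_2})\in L_1$ is $H$-invariant gives
\[(\alpha_{L_2}\,|\,\inf\chi_1)=\sum_{g_1\in G_1}\chi_1(g_1^{-1})\,g_1\cdot Tr(\alpha_{L_2})=(Tr(\alpha_{L_2})\,|\,\chi_1)_{G_1}\enspace.\]
The crux is to show that $Tr(\alpha_{L_2})$ is a $\bo_K[G_1]$-generator of $\A_{L_1/K}$, i.e., $Tr(\alpha_{L_2})=v_0\alpha_{L_1}$ for some $v_0\in\bo_K[G_1]^\times$. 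By abelianness of $G_2$, $Tr(\bo_K[G_2]\alpha_{L_2})=\bo_K[G_1]\,Tr(\alpha_{L_2})$, so it suffices to prove $Tr(\A_{L_2/K})=\A_{L_1/K}$. From the multiplicative decomposition $\A_{L_2/K}=\A_{L_2/L_1}\cdot\A_{L_1/K}\bo_{L_2}$ (which comes from $\D_{L_2/K}=\D_{L_2/L_1}\D_{L_1/K}\bo_{L_2}$), this reduces to $Tr(\A_{L_2/L_1})=\bo_{L_1}$. Decomposing $L_2/L_1$ into its unramified and totally ramified parts and invoking Hilbert's formula in the weakly ramified case, giving $v_{L_2}(\A_{L_2/L_1})=-(|H_0|+|H_1|-2)/2$ and $v_{L_2}(\D_{L_2/L_1})=|H_0|+|H_1|-2$, together with the trace-of-ideal formula $v_{L_1}(Tr(\p_{L_2}^m))=\lfloor(m+v_{L_2}(\D_{L_2/L_1}))/|H_0|\rfloor$, yields exponent $\lfloor(|H_0|+|H_1|-2)/(2|H_0|)\rfloor=0$ since this number is strictly less than $1$; the unramified part is handled by the standard surjectivity of trace on rings of integers.

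Putting these steps together gives $(\alpha_{L_2}\,|\,\inf\chi_1)=(\alpha_{L_1}\,|\,\chi_1)\Det_{\chi_1}(v_0)$. Taking norm-resolvents with the same transversal $\Omega$ of $\Omega_K$ in $\Omega_{\Q_p}$, and using the identity $\Det_{\chi^{\omega^{-1}}}(u)^\omega=\Det_\chi(u^\omega)$, yields $\N_{K/\Q_p}(\alpha_{L_2}\,|\,\inf\chi_1)=\N_{K/\Q_p}(\alpha_{L_1}\,|\,\chi_1)\Det_{\chi_1}(v)$ with $v=\prod_{\omega\in\Omega}v_0^\omega$; by $\Omega_{\Q_p}$-invariance of this product, $v\in\Z_p[G_1]^\times\subseteq\bo_K[G_1]^\times$. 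Multiplying by $\T_K^\star(\chi_1)=\T_K^\star(\inf\chi_1)$ from the first step completes the proof. The main obstacle is the trace-surjectivity $Tr(\A_{L_2/L_1})=\bo_{L_1}$, which essentially encodes the weak ramification hypothesis.
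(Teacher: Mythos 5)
Your proposal is correct and follows essentially the same route as the paper: split $\RT_K$ into its two factors, use inflation-invariance of the modified twisted Galois Gauss sum, and reduce the resolvent via $Tr_{L_2/L_1}$ together with the fact that $Tr_{L_2/L_1}(\alpha_{L_2})$ generates $\A_{L_1/K}$. The only difference is that where the paper cites Fr\"ohlich (III, Lemma 1.5) for the resolvent--trace identity and Erez (\S 5) for $Tr_{L_2/L_1}(\A_{L_2/K})=\A_{L_1/K}$, you supply direct proofs (coset decomposition, and the valuation computation $\lfloor(|H_0|+|H_1|-2)/(2|H_0|)\rfloor=0$), both of which are sound.
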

\begin{proof}
For $\chi\in R_{G_1,p}$, one has
\begin{align*}
{\coinf}\,\RT_K(L_{2})\,(\chi) &= \RT_K(L_2)(\inf\,\chi)\\
&= \N_{K/\Q_p}(\alpha_2\,|\,\inf\,\chi)\tau_K^\star\big(\inf\chi-(\inf\chi)^2\big)\enspace.
\end{align*}
We know that the Galois Gauss sum is inflation invariant --- use
(\ref{rootnumber}) and see \cite[p.18 and 22]{ma}, \textit{i.e.}, 
$\tau_K(\inf\chi)=\tau_K(\chi)$. The same clearly holds for the twisted Galois
Gauss sum, as well as for its modified version, thanks to (\ref{tggs})
and since:
$$(\inf\chi)(c_{K,2})=\chi\big(\theta_{L_2/K}(c_{K,2})|_{L_1}\big)=\chi\big(\theta_{L_1/K}(c_{K,2})\big)=\chi(c_{K,2})\enspace.$$
For the resolvent, we know by Lemma 1.5 of
\cite[III]{Frohlich-Alg_numb}, which is readily checked to apply to non tame extensions:
$$(\alpha_2\,|\,\inf\,\chi)_{G_2}=\big(Tr_{L_2/L_1}(\alpha_2)\,|\,\chi\big)_{G_1}\enspace,$$
where the subscripts stress the fact that the sums defining both
resolvents are not indexed by the same group.
Further since ${\A}_1=Tr_{L_2/L_1}({\A}_2)$ (see \cite[\S 5]{Erez2}),
$Tr_{L_2/L_1}(\alpha_2)$ is a normal basis generator for ${\mathcal
  A}_1$, so there exists some $u\in\bo_K[G_1]^\times$ such that
$Tr_{L_2/L_1}(\alpha_2)=u\alpha_1$. As in formula (\ref{resbasis}), this yields:
$$\big(Tr_{L_2/L_1}(\alpha_2)\,|\,\chi\big)=(\alpha_1\,|\,\chi)\Det_\chi(u)$$
and so, for any transversal $\Omega$ of $\Omega_K$ in
$\Omega_{\Q_p}$, we get:
$$\N_{K/\Q_p}(\alpha_2\,|\,\inf\,\chi)=\N_{K/\Q_p}(\alpha_1\,|\,\chi)\,\Det_\chi(v)\enspace,$$
where
$v=\prod_{\sigma\in\Gal(K/\Q_p)}u^\sigma\in\bo_K[G_1]^\times$.
\end{proof}
Proposition \ref{reduction_prop} now follows from Lemma \ref{coinf}
and Equation (\ref{coinf_formula}). 
\end{proof}

Since we have applied Corollary \ref{MLun} to $F_\wp$ with
uniformising parameter $p$, we know that $M_i\subseteq (F_\wp)_{p,2}$,
thus $p$ belongs to the norm group $N_{M_i/K}(M_i^\times)$ of $M_i/K$
by \cite[Lemma 7.4]{iwasawa}. Assuming Theorem \ref{local} we get that
$\RT_{F_\wp}(M_i)=1$ for appropriate choices of $\alpha_{M_i}$; of the
transverse $\Omega$ of $\Omega_{F_\wp}$ in $\Omega_{\Q_p}$ that
defines the norm-resolvent; and of the element $c_{F_\wp,2}$ of
$\bo_{F_\wp}$ that defines the modified twisted Galois Gauss
sum. Therefore, it follows from Proposition \ref{reduction_prop} that
for $\wp\in S_W$:
$$\RT_{F_\wp}(N_\p)\in\Det(\bo_{E_\QQ^1}[G(\wp)]^\times)$$
as announced. We are left with proving Theorem \ref{local}, which is
the goal of the next section.

\section{The local computation}\label{computation}
We fix a rational prime $p$ and an unramified finite extension $K$
of $\Q_p$; we denote by $k$ its residue field, and set $d=[K:\Q_p]=[k:\Z_p/p\Z_p]$.
Let $M$ be a cyclic wildly and weakly ramified extension of $K$, with
Galois group $H$, such that $p$ belongs to the norm group
$N_{M/K}(M^\times)$ of $M/K$, namely $M\subset
K_p$, thus $M$ is one of the degree $p$ subextensions $M_i$ of $K_{p,2}/K$ of
Proposition \ref{r} applied to $\pi=p$. It follows that $M$
is the fixed subfield of $M_{p,2}/K$ by the kernel of an irreducible
character $\chi$ of $\Gal(M_{p,2}/K)$, and the irreducible
characters of $H$ are the $\chi^j$, $0\le j\le p-1$ ($\chi^0$ is the
trivial character $\chi_0$).  

Let $0\le j\le p-1$, then 
$\RT_K(M)(\chi^j)=\mathcal{N}_{K/\mathbb{Q}_p}(\alpha_{M}\mid\chi^j)\tau_K^\star(\chi^j-\chi^{2j})$. 
We first use the explicit construction from \cite{Pickett} of a
self-dual normal basis generator $\alpha_{M}$ for $\A_{M/K}$ over
$\bo_K$, to calculate the norm-resolvent
$\mathcal{N}_{K/\mathbb{Q}_p}(\alpha_{M}\mid\chi^j)$. We then
calculate the modified twisted Galois Gauss sum
$\tau_K^\star(\chi^j-\chi^{2j})$, and show that $\RT_K(M)=1$ for
appropriate choices of the transverse $\Omega$ defining the
norm-resolvent and the element $c_{K,2}$ of $K$ defining the modified 
Galois Gauss sum.
%
\subsection{Dwork's exponential power series}
Let $\gamma\in\Q_p^c$ be a root of
the polynomial $X^{p-1}+p$ and note that, as this is an Eisenstein
polynomial, $\gamma$ will be a uniformising parameter of $K(\gamma)$. 

\begin{definition}\label{dwork}
We define Dwork's exponential power series as
\[E_{\gamma}(X)=\exp(\gamma X -\gamma X^p),\] where the right hand
side is to be thought of as the power series expansion of the
exponential function.
\end{definition}

Here we recall some important properties of Dwork's power series. Let
$\C_p$ denote the completion of $\Q_p^c$ and
$|\,.\,|_p:\C_p\rightarrow{\mathbb R}$
its absolute value such that $|p|_p=p^{-1}$. For instance:
$$|\gamma|_p=|N_{K(\gamma)/\Q_p}(\gamma)|_p^{1/[K(\gamma):\Q_p]}=|p|_p^{1/[K(\gamma):K]}=p^{-1/(p-1)}\enspace,$$
where $N_{K(\gamma)/\Q_p}$ stands for the norm from $K(\gamma)$ to $\Q_p$.
We denote by $\ord_p$ the associated
valuation: for $a\in\C_p$, $|a|_p=p^{-\ord_p(a)}$.
For $r\in{\mathbb R}$, let $D(r^-)=\{a\in\C_p:|a|_p<r\}$
(resp. $D(r^+)=\{a\in\C_p:|a|_p\le r\}$) denote the so-called
open (resp. closed) disc of radius $r$ about $0$. 

The radius of convergence of a series $\sum_na_nX^n$ with coefficients
in $\C_p$ is $\big(\limsup|a_n|_p^{1/n}\big)^{-1}$; it equals
the largest real number $r$ such that the series converges
in $D(r^-)$, see \cite[IV1]{Koblitz}. From standard theory, we know that the radius of
convergence of $\exp$ is $p^{-1/(p-1)}$. If we write $E_\gamma(X)=\sum_{n\ge 0}e_nX^n$, then
$|e_n|_p^{1/n}\le p^{(1-p)/p^2}$ for all positive $n$ by \cite[14 Lemma
  2.2(i)]{LangII}, therefore the radius of convergence of $E_\gamma$ is at
least $p^{(p-1)/p^2}$. In particular, $E_\gamma$ converges on $D(1^+)$,
hence on $\bo_K$. Note that one has
$E_\gamma(a)=\exp(\gamma a-\gamma a^p)$ 
for $a\in D(1^-)$, because then $|\gamma a-\gamma a^p|_p=|\gamma
a|_p<p^{-1/(p-1)}$, but this expression can not be used  
when $|a|_p=1$: the image of the unit circle of $\C_p$ by
$\gamma X-\gamma X^p$ is not contained in the disk of
convergence of $\exp$. For such an $a$, only the series in
$e_n$'s is available.

Nevertheless, if $a\in D(1^+)$ then $|p\gamma a|_p\le
|p\gamma|_p<p^{-1/(p-1)}$, and the same holds for $|p\gamma
a^p|_p$. Using the homomorphic property of the exponential power
series, we deduce that: 
$$E_\gamma(a)^p=\exp(p\gamma a)\exp(-p\gamma a^p)\enspace.$$
In particular $E_{\gamma}(1)$ is a $p$-th root of unity.
By \cite[14 Lemma 2.2(ii)]{LangII} and the power series expansion of
$\exp$ we get:
\begin{equation}\label{exp}
E_{\gamma}(X)\equiv 1+\gamma X \mod\gamma^2\bo_{\Q_p(\gamma)}[[X]]\enspace.
\end{equation}
This shows that $E_{\gamma}(1)$ is in fact a primitive $p$-th root of
unity. Further the different choices of
$\gamma$ correspond to the different choices of this root of unity;
therefore we may choose $\gamma$ so that $E_{\gamma}(1)=\zeta$, where $\zeta$ was
defined in subsection \ref{gauss}. 
We remark that
$[K(\zeta):K]=[K(\gamma):K]$ and $\zeta\in K(\gamma)$, therefore
$K(\gamma)=K(\zeta)$; we shall now denote this field by $K'$. 

Formula (\ref{exp}) also shows that if we let $u\in\bo_K$ be a unit, then
$E_{\gamma}(u)-1$ is a uniformising parameter in $K'$.  
\medskip

Let $\mu\in\Z_p$ and set
$B_\mu(X)=\sum_{n\ge 0}\frac{\mu(\mu-1)\ldots(\mu-n+1)}{n!}X^n$. This
series belongs to $\Z_p[[X]]$ and converges on $D(1^-)$, see
\cite[p.81]{Koblitz}. For any sequence of rational integers
$(\mu_i)_{i}$ converging towards $\mu$, one has
$B_\mu(X)=\lim_iB_{\mu_i}(X)$ (coefficient-wise). Further the $\mu_i$'s
can be taken to be positive, in which case
$B_{\mu_i}(X)=(1+X)^{\mu_i}$, so we may abbreviate notations
writing $B_\mu(X)=(1+X)^\mu$. Using the fact that
$\exp(X)^{\mu_i}=\exp(\mu_iX)$ for every $i$, and taking the limit of
the coefficients when $i$ goes to infinity, one deduces that:
$$\exp(\mu X)=\exp(X)^\mu\enspace.$$

We consider the power series
$E_\gamma(X)^\mu=B_\mu\big(E_\gamma(X)-1\big)$, and see using
(\ref{exp}) that it converges on $D(1^+)$. Substituting $\mu X$ for $X$
in $E_\gamma$ yields a power series $E_\gamma(\mu X)$ that also
converges on $D(1^+)$. Further, let $\mu_{p-1}$ denote the subgroup of
$\Z_p^\times$ of $(p-1)$-th roots of unity. We get:
\begin{lemma}\label{mup-1}
Let $\mu\in\mu_{p-1}$, then $E_\gamma(\mu X)=E_\gamma(X)^\mu$.
\end{lemma}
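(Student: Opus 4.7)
The plan is straightforward and rests entirely on the Teichmüller-type identity satisfied by the elements of $\mu_{p-1}$. First I note that $\mu \in \mu_{p-1}$ means $\mu^{p-1}=1$, hence $\mu^p = \mu$. Substituting this into the defining expression for Dwork's series gives
\[
E_\gamma(\mu X) = \exp\bigl(\gamma(\mu X) - \gamma(\mu X)^p\bigr) = \exp\bigl(\gamma\mu X - \gamma\mu^p X^p\bigr) = \exp\bigl(\mu(\gamma X - \gamma X^p)\bigr),
\]
where these equalities are to be read as identities of formal power series in $X$ with coefficients in $\Q_p(\gamma)$.

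Next I apply the formal power series identity $\exp(\mu Y)=\exp(Y)^\mu$, which the excerpt has just established by passing to the $p$-adic limit over positive integers $\mu_i\to\mu$ and using $\exp(Y)^{\mu_i}=\exp(\mu_iY)$. Taking $Y=\gamma X-\gamma X^p$, which has no constant term so that substitution of formal power series is legitimate, one obtains
\[
\exp\bigl(\mu(\gamma X - \gamma X^p)\bigr) = \exp(\gamma X - \gamma X^p)^\mu = E_\gamma(X)^\mu.
\]
Combining the two displays yields the desired equality $E_\gamma(\mu X)=E_\gamma(X)^\mu$.

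The argument is entirely formal and presents no real obstacle. The only point worth flagging is that one should invoke $\exp(\mu Y)=\exp(Y)^\mu$ as an equality of formal power series rather than of analytic functions, because $Y=\gamma X-\gamma X^p$ lies on the boundary of convergence of $\exp$ when $|X|_p=1$; however both sides of the claimed identity are power series in $X$ which independently converge on $D(1^+)$ (the left because $E_\gamma$ converges there and $|\mu|_p=1$, the right by the preceding remarks on $B_\mu(E_\gamma(X)-1)$), so the formal equality automatically gives equality of $\C_p$-valued functions on $D(1^+)$ as well.
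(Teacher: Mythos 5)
Your proof is correct and follows the same route as the paper: use $\mu^p=\mu$ for $\mu\in\mu_{p-1}$ to rewrite $E_\gamma(\mu X)$ as $\exp\big(\mu(\gamma X-\gamma X^p)\big)$, then apply the identity $\exp(\mu Y)=\exp(Y)^\mu$ established just before the lemma. Your extra remark about reading the identity formally and checking convergence of both sides on $D(1^+)$ separately is a sensible precaution that the paper leaves implicit.
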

\begin{proof}
The result is straightforward since
$$\exp\big(\gamma\mu X-\gamma(\mu X)^p\big)=\exp\big(\mu(\gamma X-\gamma X^p)\big)=\exp(\gamma X-\gamma X^p)^\mu\enspace.$$
\end{proof}

%
\subsection{The Kummer extensions in $K_{p,2}$}
Throughout this section we will sometimes identify the multiplicative
groups of the residue fields $k$ and $\Z_p/p\Z_p$ with their
Teichm\"uller lifts. Namely let $\mu_{q-1}$ and $\mu_{p-1}$ denote the
groups of roots of unity of order prime to $p$ in $K$ and $\Q_p$
respectively, then $k^\times\cong\mu_{q-1}$ and
$(\Z_p/p\Z_p)^{\times}\cong\mu_{p-1}$. Specifically, 
$$\bo_K^{\times}=\mu_{q-1}\times(1+p\bo_K)\quad\text{ and }\quad\Z_p^{\times}=\mu_{p-1}\times(1+p\Z_p)\enspace.$$
Since $K/\Q_p$ is unramified, we shall also identify its Galois group
and that of the residue extension, and set
$\Sigma=\Gal(k/(\mathbb{Z}_p/p\mathbb{Z}_p))=\Gal(K/\Q_p)$. 

We now let $\eta$ be a normal basis generator for $k$
over $\mathbb{Z}_p/p\mathbb{Z}_p$ and as described we will often
think of $\eta$ as actually lying in $\bo_K^{\times}$. The conjugates
of $\eta$ under $\Sigma$ are the $\eta^{p^j}$, $0\le j\le d-1$, so
each $u\in k$ has a unique decomposition:
\[u=\sum_{j=0}^{d-1}u_{j}\eta^{p^j}\enspace,\]
with coefficients $u_{j}\in\Z_p/p\Z_p$. 
For ease of notation we identify $0\in\Z_p/p\Z_p$ and $0\in\bo_K$,
so that each $u_{j}\in\Z_p/p\Z_p$ can be seen as an element
$u_{j}\in\{0\}\cup\mu_{p-1}\subset\Z_p$. To $u\in k$ as above we
associate: 
\begin{equation}\label{x(u)}
x_{u}=\prod_{j=0}^{d-1}E_{\gamma}\big(\eta^{p^j}\big)^{u_j}
=\prod_{j=0}^{d-1}E_{\gamma}\big(u_j\eta^{p^j}\big)\ \in\,K'
\end{equation}
(recall $K'$ contains $\gamma$ and is a complete field). The second
equality comes from Lemma \ref{mup-1}; note that $x_u$ does not equal
$E_\gamma(u)$ since $E_\gamma$ is not a group homomorphism on the
additive group $D(1)^+$. Further
$x_{0}=1$ and when $u\in k^\times$, it follows from Formula
(\ref{exp}) that $x_{u}\equiv 1+\gamma u\mod\gamma^2\bo_{K'}$, so
$x_{u}-1$ is a uniformising parameter in $K'$.   
\begin{proposition}\label{xi} 
There are exactly $r=\frac{p^d-1}{p-1}$ degree $p$ extensions of $K'$
contained in $K_{p,2}$, given by $L_i=K'M_i$, $1\le i\le r$. Further
$k^\times/(\Z_p/p\Z_p)^\times$ is in one-to-one correspondence with the
set $\{L_i,\, 1\le i\le r\}$ \textit{via} the map $\bar u\mapsto
K'\big(x_{u}^{1/p}\big)$. 
\end{proposition}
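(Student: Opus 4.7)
The plan is to split the statement into two parts: first count the degree $p$ extensions of $K'$ inside $K_{p,2}$, then parametrize them by $k^\times/(\mathbb{Z}_p/p\mathbb{Z}_p)^\times$ via Kummer theory applied through Dwork's power series.

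First, since $K'=K_{p,1}$ is the fixed field of $\Gamma$ in $K_{p,2}$, the Galois correspondence identifies degree $p$ extensions of $K'$ inside $K_{p,2}$ with hyperplanes of $\Gamma\cong k$ viewed as a $d$-dimensional $\mathbb{F}_p$-vector space; there are exactly $r=(p^d-1)/(p-1)$ of them. The fields $L_i=K'M_i$ are natural candidates: since $M_i/K$ is wildly totally ramified of degree $p$ while $K'/K$ is tamely totally ramified of degree $p-1$, these two extensions are linearly disjoint over $K$, so $[L_i:K']=p$ and $L_i\subseteq K'M_{p,2}=K_{p,2}$. The same disjointness shows $L_i\neq L_j$ for $i\neq j$, so the $r$ fields $L_i$ exhaust the degree $p$ subextensions of $K_{p,2}/K'$.

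For the parametrization, I would first verify that the assignment $u\mapsto[x_u]\in K'^\times/(K'^\times)^p$ is an $\mathbb{F}_p$-linear map from $(k,+)$. This is because $E_\gamma(\eta^{p^j})^p$ is tautologically a $p$-th power, and the definition of $x_u$ uses the Teichm\"uller lift $u_j\in\{0\}\cup\mu_{p-1}$ of each coefficient of $u$ on $\eta^{p^j}$: replacing it by any integer lift congruent to $u_j\bmod p$ only alters $x_u$ by $p$-th powers, so $[x_{u+v}]=[x_u][x_v]$. Next, Lemma \ref{mup-1} gives $x_{cu}=x_u^c$ for $c\in(\mathbb{Z}_p/p\mathbb{Z}_p)^\times\hookrightarrow\mu_{p-1}$, so $K'(x_{cu}^{1/p})=K'(x_u^{1/p})$ and the map $\bar u\mapsto K'(x_u^{1/p})$ is well-defined on $k^\times/(\mathbb{Z}_p/p\mathbb{Z}_p)^\times$.

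The crucial step is showing that for $u\in k^\times$ the field $K'(x_u^{1/p})$ is a degree $p$ subextension of $K_{p,2}/K'$. The containment $K'(x_u^{1/p})\subseteq K_{p,2}$ is the substantive input: it comes from the construction in \cite{Pickett}, where Dwork's exponential $E_\gamma$ is used to produce torsion points of the Lubin-Tate formal group underlying the tower $K_{p,n}$. Granting this, Kummer theory yields an $\mathbb{F}_p$-linear map $k\to K'^\times/(K'^\times)^p$ whose image lies in the $d$-dimensional Kummer subgroup corresponding to $K_{p,2}/K'$ (its dimension being $\log_p[K_{p,2}:K']=d$). A dimension count on both sides forces this map to be an isomorphism onto that subgroup; in particular its kernel is trivial, so $K'(x_u^{1/p})/K'$ has degree exactly $p$ for every $u\in k^\times$. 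Passing to lines in $k$ yields the desired bijection between $k^\times/(\mathbb{Z}_p/p\mathbb{Z}_p)^\times$ and $\{L_i\}_{1\le i\le r}$.

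The main obstacle is the containment $K'(x_u^{1/p})\subseteq K_{p,2}$, which is where Dwork's power series really earns its keep. Everything else is either counting or multiplicative bookkeeping modulo $p$-th powers; it is this step that encodes the link between Dwork's construction and the explicit Lubin-Tate description of abelian extensions of $K$ for the uniformiser $p$.
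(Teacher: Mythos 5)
There is a genuine gap at the decisive step. You construct an $\mathbb{F}_p$-linear map $u\mapsto[x_u]$ from $k$ to $K'^\times/(K'^\times)^p$ whose image lies in the $d$-dimensional Kummer subgroup cut out by the maximal $p$-elementary abelian subextension of $K_{p,2}/K'$, and then assert that ``a dimension count on both sides forces this map to be an isomorphism; in particular its kernel is trivial.'' Equality of dimensions of source and target does not force a linear map to be an isomorphism --- an $\mathbb{F}_p$-linear map $\mathbb{F}_p^d\to\mathbb{F}_p^d$ can perfectly well have nontrivial kernel --- so injectivity (equivalently, that $x_u$ is not a $p$-th power in $K'$ for $u\in k^\times$, and more generally that distinct lines give distinct fields) is exactly what remains to be proved and is not supplied. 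It can be repaired: either argue directly that $x_u\equiv 1+\gamma u\bmod\gamma^2\bo_{K'}$ for $u\in k^\times$ (Formula (\ref{exp})), so $x_u-1$ is a uniformiser of $K'$ while every $p$-th power of a unit of $K'$ is congruent to a prime-to-$p$ root of unity modulo $\gamma^2$, whence $x_u\notin(K'^\times)^p$ and, applied to $x_{v-cu}$, injectivity on lines; or invoke the full strength of \cite[Theorem 5]{Pickett}, which is what the paper does. That theorem does not merely give the containment $K'(x_u^{1/p})\subseteq K_{p,2}$ (the only part you extract from it): it states that \emph{every} degree $p$ extension of $K'$ inside $K_{p,2}$ is generated by a $p$-th root of a product $\prod_jE_\gamma(\eta^{p^j})^{n_j}$ with the $n_j$ not all divisible by $p$. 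This gives surjectivity of $\bar u\mapsto L_{(u)}$ outright, and bijectivity then follows from comparing the cardinality $r$ of $k^\times/(\Z_p/p\Z_p)^\times$ with the $r$ distinct fields $L_i=K'M_i$; no injectivity argument is needed.

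Separately, several intermediate identifications in your counting step are wrong when $d>1$, even though the count itself survives: $K'=K(\zeta)$ has degree $p-1$ over $K$ and is a proper subfield of $K_{p,1}$ (which has degree $q-1=p^d-1$), so $K'$ is not the fixed field of $\Gamma$; likewise $K'M_{p,2}$ has degree $(p-1)p^d$ over $K$ and is strictly smaller than $K_{p,2}$, and $[K_{p,2}:K']=rq$ is not a power of $p$, so ``$\log_p[K_{p,2}:K']=d$'' is not literally meaningful. The correct statement is that $\Gal(K_{p,2}/K')$ is the product of a group of order $r$ prime to $p$ with $\Gamma\cong\mathbb{F}_p^d$, so its index $p$ subgroups all contain the prime-to-$p$ part and correspond to hyperplanes of $\Gamma$, giving the count $r$; your conclusion is right but the justification needs this repair.
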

\begin{proof}
From \cite[Theorem 5]{Pickett}, we know that every degree $p$ 
extension of $K'$ contained in $K_{p,2}$ is generated by the $p$-th
root of an element $\prod_{j=0}^{d-1}E_\gamma\big(\eta^{p^j}\big)^{n_j}$, with
exponents $n_j\in\{0,1,\ldots,p-1\}$, not all zero. (In fact
the statement in \cite{Pickett} requires one element in the basis of 
$k$ over $\Z_p/p\Z_p$ to equal $1$, but this is never used in the
proof, so we can use a normal basis here instead.) To such an element
corresponds a unique $u=\sum_ju_j\eta^{p^j}\in k^\times$, where
$u_j$ is the coset of $n_j$ modulo $p\Z_p$. Let us lift each $u_j$ in $\{0\}\cup\mu_{p-1}\subset\Z_p$ and
write $u_j=n_j+pm_j$ with $m_j\in\Z_p$, then
$$x_u=\prod_{j=0}^{d-1}E_\gamma\big(\eta^{p^j}\big)^{n_j}\cdot\left(\prod_{j=0}^{d-1}E_\gamma\big(\eta^{p^j}\big)^{m_j}\right)^p\enspace.$$
We conclude that the degree $p$ extension of $K'$ contained in
$K_{p,2}$ are the $L_{(u)}=K'\big(x_u^{1/p}\big)$ for  $u\in k^\times$.

Multiplying $u$ by an element $\mu$ in $(\Z_p/p\Z_p)^\times$
changes $x_{u}$ to $x_{\mu u}={x_u}^\mu$. If we let $\mu=n+p\mu'$
with $n\in\{1,\ldots,p-1\}$ and $\mu'\in\Z_p$, we find that $x_{\mu
  u}$ equals a prime to $p$ power of $x_u$ multiplied by the $p$-th
power of an element of $K'$, so its $p$-th root generates the same
extension of $K'$ as that of $x_u$. Therefore the map given in the
statement is well defined and surjective. 
For any integer $1\le i\le r$, the compositum $M_iK'=L_i$ is a degree $p$
extension of $K'$ contained in $K_{p,2}$, so we get that the map is a
one-to-one correspondence.
\end{proof}
\begin{remark}
Keeping the notations of the proof, it would be nice to show that
$L_{(u)}$ is also generated by the $p$-th root of $E_\gamma(u)$. We
would then get a generating set (for the degree $p$ extensions of $K'$
contained in $K_{p,2}$) that would not depend on the choice of a basis
of $k$ over $\Z_p/p\Z_p$. However, the fact that $E_\gamma$ is not homomorphic
on the additive group $D(1)^+$ does not make this goal easy to achieve.
\end{remark}
\subsection{Lifting Galois automorphisms}\label{sub:lifting}
We now set $L=K'M$, thus by the former result
$L=K'(x_{\vareps}^{1/p})$ for an element
$\vareps$ of $k^\times$ which is uniquely determined modulo
$(\Z_p/p\Z_p)^\times$ by $M$. We fix $\vareps$ for the rest of the paper and we set
$x=x_{\vareps}$ for brevity. 
We describe our field extensions in 
Figure \ref{ext-dia} below,
where we let $H=\Gal(M/K)$, $\Delta=\Gal(L/M)$.  
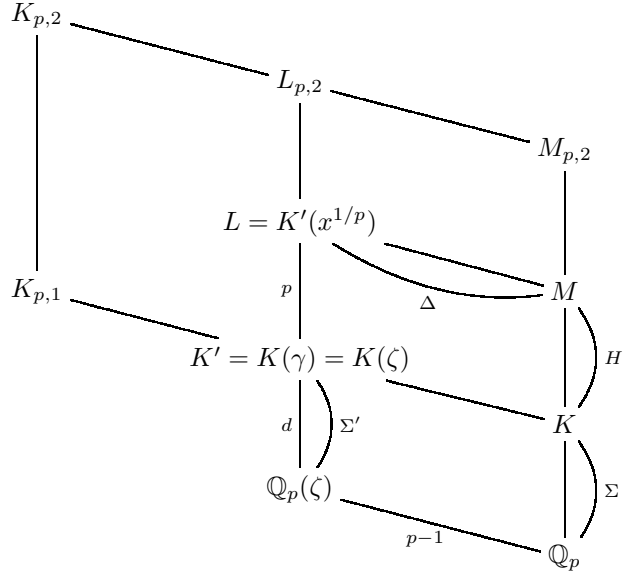
\begin{figure}[htb]
\label{ext-dia}
{
$$\xymatrix@=10pt{K_{p,2}\ar@{-}@/0pc/[drrr]\ar@{-}@/0pc/[dddd]\\
&&&L_{p,2}\ar@{-}@/0pc/[drrr]\ar@{-}@/0pc/[dd]&&&\\
&&&&&&M_{p,2}\ar@{-}@/0pc/[dd]\\
&&&L=K'(x^{1/p})\ar@{-}@/0pc/[dd]_p\ar@{-}@/0pc/[drrr]\ar@{-}@/d1pc/[drrr]_{\Delta}\\
K_{p,1}\ar@{-}@/0pc/[drrr]&&&&&&M\ar@{-}@/0pc/[dd]\ar@{-}@/r1pc/[dd]^{H}\\
&&&K'=K(\gamma)=K(\zeta)\ar@{-}@/0pc/[drrr]\ar@{-}@/r1pc/[dd]^{\Sigma'}\\
&&&&&&K\ar@{-}@/r1pc/[dd]^{\Sigma}\\
&&&\Q_p(\zeta)\ar@{-}@/0pc/[uu]^d\ar@{-}@/0pc/[drrr]_{p-1}&&&\\
&&&&&&\Q_p\ar@{-}@/0pc/[uu]}$$
}
\caption{Extensions diagram}
\end{figure}

We need to study how the elements of $\Gal(K'/K)$ and
$\Sigma'=\Gal(K'/\Q_p(\zeta))$ can be respectively lifted to
automorphisms of $L$ (recall that $L\subset K^{ab}$) and of the Galois closure $\wt L$ of
$L/\Q_p(\zeta)$. 

We have the following group isomorphisms:  
$$\begin{array}{ccccl}
\mu_{p-1} &\cong& (\Z_p/p\Z_p)^\times &\cong& \Gal(\Q_p(\zeta)/\Q_p)\cong\Gal(K'/K)\\
\mu &\longmapsto& \mu\mod p &\longmapsto& (s_\mu:\zeta\mapsto\zeta^\mu)\\
\end{array}$$
As a consequence of Formula (\ref{exp}) in Subsection
\ref{dwork}, we know that $x-1$ is a uniformising parameter of $K$. As
noted in \cite{Pickett} before Lemma 9, it implies that $x^{1/p}-1$ is
a uniformising parameter of $L$. It follows that both $x-1$ and
$x^{1/p}-1$ belong to $D(1^-)$, so we may consider raising $x$ and $x^{1/p}$ to the
$\mu$-th power for any $\mu\in\Z_p$. 
Applying Lemma 10 of \cite{Pickett} to
$x$ as well as to $\zeta$ (with appropriate choices of the exponents
$n_i$), we get that, for any $\mu\in\mu_{p-1}$:
$$s_\mu(x)=x^\mu\quad\mbox{ and }\quad s_\mu(\zeta)=\zeta^\mu\enspace.$$
Since $[L:K']=p$, each
automorphism $s_\mu\in\Gal(K'/K)$ has $p$ distinct liftings in
$\Gal(L/K)$, which are determined by their value at
$x^{1/p}$. More precisely, let us fix a $p$-th root of
$s_\mu(x)$ in $L$, by setting 
$$s_\mu(x)^{1/p}=\big(x^{1/p}\big)^\mu=x^{\mu/p}\enspace,$$
where $x^{1/p}$ is our previous (implicit) choice of a $p$-th
root of $x$. 
Any lifting $\wt s$ of $s_\mu$
satisfies $\wt s\big(x^{1/p}\big)^p=s_\mu(x)$, so there exists an integer
$n\in\{0,\ldots,p-1\}$ such that $\wt s(x^{1/p})=\zeta^{n}s_\mu(x)^{1/p}$,
and $n$ determines $\wt s$.
One easily checks that, for any integer $k$:
$${\wt s}^k(x^{1/p})=\zeta^{nk\mu^{k-1}}x^{\mu^k/p}\enspace,$$
so 
${\wt s}^{p-1}=1$ if and only if $n=0$. Further, in this 
case, one obtains that $\wt s$ has the same order in
$\Gal(L/K)$ as $\mu$ in $\mu_{p-1}$. We have thus proved:
%
\begin{proposition}\label{delta}
Let $\mu$ denote a primitive $(p-1)$-th root of unity. Then $\Gal(L/K)$
contains exactly one element $\wt s_\mu$ that maps $\zeta$ to $\zeta^\mu$
and which is of order $p-1$, hence generates $\Delta$ and fixes $M$.
Further $\wt s_\mu(x^{1/p})=x^{\mu/p}$.
\end{proposition}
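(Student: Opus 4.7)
My plan is to consolidate the computation carried out just before the statement. Fix a lifting $\wt s$ of $s_\mu$; it is determined by an integer $n \in \{0,1,\ldots,p-1\}$ via $\wt s(x^{1/p}) = \zeta^n x^{\mu/p}$. Applying the iteration formula $\wt s^k(x^{1/p}) = \zeta^{nk\mu^{k-1}} x^{\mu^k/p}$ with $k = p-1$ and using that $\mu^{p-1}=1$ in $\Z_p$, I get
$$\wt s^{p-1}(x^{1/p}) = \zeta^{n(p-1)\mu^{p-2}}\, x^{1/p}, \qquad \wt s^{p-1}(\zeta) = \zeta^{\mu^{p-1}} = \zeta.$$
Since $(p-1)\mu^{p-2}$ is a unit modulo $p$, the condition $\wt s^{p-1}=1$ is equivalent to $n\equiv 0\pmod p$, i.e.\ $n=0$. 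This yields at most one lifting of order dividing $p-1$, proving uniqueness; and it gives as the candidate $\wt s_\mu$ the unique lifting with $\wt s_\mu(x^{1/p}) = x^{\mu/p}$, which is exactly the formula in the statement.

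Next I would verify that $\wt s_\mu$ really has order $p-1$ (and not a proper divisor). For any $1 \le k < p-1$ one has $\wt s_\mu^k(\zeta) = \zeta^{\mu^k}$, and since $\mu$ is a primitive $(p-1)$-th root of unity in $(\Z_p/p\Z_p)^\times$, we have $\mu^k \not\equiv 1 \pmod p$, so $\wt s_\mu^k \neq 1$. Combined with $\wt s_\mu^{p-1}=1$ established above, this shows $\wt s_\mu$ has order exactly $p-1$.

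Finally, I must show $\wt s_\mu$ generates $\Delta$ and fixes $M$. The extensions $K'/K$ and $M/K$ are linearly disjoint (their degrees $p-1$ and $p$ are coprime) and both abelian, so $L/K$ is abelian with $\Gal(L/K) \cong \Gal(K'/K) \times \Gal(M/K) \cong \Z/(p-1)\Z \times \Z/p\Z$. Since $\gcd(p,p-1)=1$, this group has a unique subgroup of order $p-1$; as $|\Delta|=[K':K]=p-1$, that subgroup is $\Delta$. The element $\wt s_\mu$ has order $p-1$, hence lies in $\Delta$ and generates it, and in particular fixes $M$. This completes the proof.

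The argument is essentially routine group theory combined with the explicit formulas that the authors have already set up; the only subtle point is the coprimality observation $\gcd(p,p-1)=1$ used twice (once to ensure $n=0$ in the characterisation of liftings of order dividing $p-1$, and once to pin down $\Delta$ as the unique subgroup of order $p-1$), which reflects the coprime interaction between the tame part $K'/K$ and the wild part $M/K$ of the extension.
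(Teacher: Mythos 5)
Your proposal is correct and follows essentially the same route as the paper: the authors likewise parametrise the liftings of $s_\mu$ by the integer $n$, use the iteration formula ${\wt s}^k(x^{1/p})=\zeta^{nk\mu^{k-1}}x^{\mu^k/p}$ to force $n=0$, observe that the resulting element has the same order as $\mu$, and conclude via the structure of $\Gal(L/K)$. You have merely written out the details that the paper compresses into ``one easily checks'' and ``one obtains''.
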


We now consider extending automorphisms in $\Sigma'=\Gal(K'/\Q_p(\zeta))$. 
Since $\gamma$ is fixed by $\Sigma'$, one checks that
$\sigma(x)=x_{\sigma(\vareps)}$ for any $\sigma\in\Sigma'$. Therefore,
by Kummer theory, \(L/\Q_p(\zeta)\) is Galois if and only if every
$x_{\sigma(\vareps)}$ can be written as $x^my^p$ for some integer $m$
prime to $p$ and some $y\in K'$. Using the fact that the elements of
$\Sigma'$ act on $\eta$ by raising it to its $p^n$-th power for
$n\in\{0,1,\ldots,d-1\}$, one then checks that this only happens when
$\vareps=t\sum_{n=0}^{d-1}s^{d-n-1}\eta^{p^n}$ for some
$s,t\in(\Z_p/p\Z_p)^\times$ with $s$ of order dividing $d$. In
particular, $s=1$ yields $L=K(\xi)=K.\Q_p(\xi)$ which is always
Galois over $\Q_p(\zeta)$ --- recall $\xi$ and $\zeta$ were defined in
Equation \ref{zetaxi}. 

Since $L/\Q_p(\zeta)$ is not Galois in the general case,
we consider the Galois closure $\wt L$ of $L/\Q_p(\zeta)$,
given by $\wt L=K'(\{\sigma(x)^{1/p}:\sigma\in\Sigma'\})$. The
extension $\wt L/K'$ is Kummer and its Galois group is
$p$-elementary abelian, of order $p^m$ for some integer $m$ (equal to
$1$ if and only if $L/\Q_p(\zeta)$ is Galois). Let  
$\sigma_1=1$ and $\sigma_2,\ldots,\sigma_m\in\Sigma'$ be such that
$\wt L$ is the compositum of the $m$ degree $p$ extensions
$K'(\sigma_n(x)^{1/p})$, $1\le n\le m$, of $K'$. Any
$\sigma\in\Sigma'$ extends to $\Gal(\wt L/\Q_p(\zeta))$ in $p^m=[\wt
  L:K']$ different ways, determined by the values at the
$\sigma_n(x)^{1/p}$, $1\le n\le m$. More precisely, let us fix a $p$-th root of
$\sigma\sigma_n(x)$ for each $n\in\{1,\ldots,m\}$, then a
lifting $\wt\sigma$ of $\sigma$ is determined by the integers
$k(n)\in\{0,1,\ldots,p-1\}$ such that, for any $n$,
$\wt\sigma\big(\sigma_n(x)^{1/p}\big)=\zeta^{k(n)}\big(\sigma\sigma_n(x)\big)^{1/p}$. The 
choices of the $k(n)$ for all the $n$ yield the $p^m$
possible liftings of $\sigma$, hence each of these choices is realised.
In particular, taking $k(1)=0$, we
see that there exists a lifting $\wt\sigma$ of $\sigma$ to $\wt L$
such that
\begin{equation}\label{lifting}
\wt\sigma(x^{1/p})=\sigma(x)^{1/p}\enspace,
\end{equation}
for any prior choice of a $p$-th root of $\sigma(x)$. We deduce the
following result. Let $N_{K'/\Q_p(\zeta)}(x)$ denote the norm of
$x$ from $K'$ to $\Q_p(\zeta)$.
\begin{proposition}\label{transversal}
For any choice of a $p$-th root of $N_{K'/\Q_p(\zeta)}(x)$, there
exists a transversal $\Omega$ of $\Omega_K$ in $\Omega_{\Q_p}$ 
such that
$$\prod_{\omega\in\Omega}\big(x^{1/p}\big)^\omega=N_{K'/\Q_p(\zeta)}(x)^{1/p}\enspace.$$
\end{proposition}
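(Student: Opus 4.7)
The plan is to construct $\Omega$ as a set of lifts of the Galois group $\Sigma'=\Gal(K'/\Q_p(\zeta))$ to $\wt L$ (and then arbitrarily to $\Omega_{\Q_p}$), carefully choosing along the way the $p$-th roots of the conjugates $\sigma(x)$ so that the product comes out equal to the prescribed $N_{K'/\Q_p(\zeta)}(x)^{1/p}$. The first observation is that, because $K/\Q_p$ is unramified while $\Q_p(\zeta)/\Q_p$ is totally (tamely) ramified of degree $p-1$, the fields $K$ and $\Q_p(\zeta)$ are linearly disjoint over $\Q_p$. Hence $[K':\Q_p(\zeta)]=[K:\Q_p]=d$, and the restriction map $\Sigma'\to\Sigma$ is a group isomorphism. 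Any set of $d$ lifts of the elements of $\Sigma'$ to $\Omega_{\Q_p}$ (extending arbitrarily beyond $\wt L$) therefore projects bijectively onto $\Sigma$ under restriction to $K$, and so forms a transversal of $\Omega_K$ in $\Omega_{\Q_p}$.

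Fix the prescribed $p$-th root $N^{1/p}$ of $N_{K'/\Q_p(\zeta)}(x)=\prod_{\sigma\in\Sigma'}\sigma(x)$. I would then select $p$-th roots $\sigma(x)^{1/p}\in\wt L$ for each $\sigma\in\Sigma'$ so that
$$\prod_{\sigma\in\Sigma'}\sigma(x)^{1/p}=N^{1/p}.$$
This is elementary: starting from any choice of $p$-th roots, the product is some $p$-th root of $N_{K'/\Q_p(\zeta)}(x)$, and all $p$ such roots differ from one another by powers of $\zeta$; since replacing a single factor $\sigma_0(x)^{1/p}$ by $\zeta^k\sigma_0(x)^{1/p}$ multiplies the product by $\zeta^k$, one can reach any prescribed $p$-th root of the norm.

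Finally I invoke Equation (\ref{lifting}): for every $\sigma\in\Sigma'$, once the $p$-th root $\sigma(x)^{1/p}$ is fixed, there exists a lifting $\wt\sigma\in\Gal(\wt L/\Q_p(\zeta))$ of $\sigma$ satisfying $\wt\sigma(x^{1/p})=\sigma(x)^{1/p}$. Extend each $\wt\sigma$ arbitrarily to an element of $\Omega_{\Q_p}$ and set $\Omega=\{\wt\sigma:\sigma\in\Sigma'\}$. By the first paragraph $\Omega$ is a transversal of $\Omega_K$ in $\Omega_{\Q_p}$, and by construction
$$\prod_{\omega\in\Omega}(x^{1/p})^\omega=\prod_{\sigma\in\Sigma'}\wt\sigma(x^{1/p})=\prod_{\sigma\in\Sigma'}\sigma(x)^{1/p}=N^{1/p}.$$
There is no serious obstacle: the argument rests entirely on the lifting freedom established just before the proposition and on the trivial remark that one can tune the product of $p$-th roots by multiplying a single factor by a power of $\zeta$.
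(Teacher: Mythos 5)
Your proof is correct and follows essentially the same route as the paper: both identify a transversal with a choice of liftings of $\Sigma'=\Gal(K'/\Q_p(\zeta))$ to $\Omega_{\Q_p}$ (using that each element of $\Sigma$ extends uniquely to $\Sigma'$), invoke the lifting freedom of Equation (\ref{lifting}) to arrange $\wt\sigma(x^{1/p})=\sigma(x)^{1/p}$, and then tune the individual $p$-th roots by powers of $\zeta$ to hit the prescribed root of the norm. Your explicit remark that adjusting a single factor suffices is a slightly more detailed justification of the paper's closing sentence, but the argument is the same.
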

\begin{proof}
Since $K/\Q_p$ is Galois, choosing a transversal $\Omega$ of
$\Omega_K$ in $\Omega_{\Q_p}$ is the same as choosing a way to extend
the elements of $\Sigma=\Gal(K/\Q_p)$ to act on $\Q_p^c$. By Galois theory,
we know that there is only one way to extend them to $\Sigma'$. Let
$\sigma\in\Sigma'$, then for any choice of a $p$-th root of
$\sigma(x)$, there exists $\omega_\sigma\in\Omega_{\Q_p}$ that extends
the lifting $\wt\sigma$ of $\sigma$ defined in (\ref{lifting}), namely:
$$\big(x^{1/p}\big)^{\omega_\sigma}=\wt\sigma(x^{1/p})=\sigma(x)^{1/p}\enspace.$$
The set $\Omega=\{\omega_\sigma,\sigma\in\Sigma'\}$ defined this way
is a transversal of $\Omega_K$ in $\Omega_{\Q_p}$, and 
$$\prod_{\omega\in\Omega}\big(x^{1/p}\big)^\omega=\prod_{\sigma\in\Sigma'}\sigma(x)^{1/p}\enspace,$$
which can be made to equal any $p$-th root of
$N_{K'/\Q_p(\zeta)}(x)$ for an adequate choice of the $p$-th roots
of the $\sigma(x)$, $\sigma\in\Sigma'$.
\end{proof}
%
\subsection{The Norm-Resolvent}\label{sub:normres}
We begin by exhibiting a (self-dual) normal basis
generator for the square root of the inverse different $\A_{M/K}$,
which we then use to compute the norm-resolvent involved in $\RT_K(M)$.
Recall $x$ has been defined at the beginning of Subsection \ref{sub:lifting}.
\begin{lemma}\label{basis_gen_lemma} 
Let
\[\alpha_{M}=\frac{1+Tr_{\Delta}(x^{1/p})}{p}\enspace,\]
then $\alpha_M$ is a self-dual normal basis generator for
 $\A_{M/K}$.
\end{lemma}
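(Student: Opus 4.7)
The plan is to verify the three defining properties of $\alpha_M$: membership in $\A_{M/K}$, generation of $\A_{M/K}$ as an $\bo_K[H]$-module by its $H$-translates, and self-duality with respect to the $K$-trace form, namely $Tr_{M/K}(\alpha_M\cdot h(\alpha_M)) = \delta_{h,1}$ for $h \in H$. The whole strategy leverages the self-dual integral normal basis construction from \cite{Pickett} for Lubin-Tate-type extensions, which the formula for $\alpha_M$ is designed to match under the identifications fixed in Subsection \ref{sub:lifting}.

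First, I observe that $\alpha_M \in M$: since $\Delta = \Gal(L/M)$ and the trace $Tr_\Delta$ is $\Delta$-invariant by definition, the numerator $1 + Tr_\Delta(x^{1/p})$ lies in $L^\Delta = M$. For the integrality, set $\pi_L = x^{1/p} - 1$, which is a uniformiser of $L$ as already noted, and expand each term $(1+\pi_L)^\nu = \sum_{k \ge 0}\binom{\nu}{k}\pi_L^k$ for $\nu \in \mu_{p-1}$ via the $B_\nu$ series. Using Proposition \ref{delta}, one has $Tr_\Delta(x^{1/p}) = \sum_{\nu \in \mu_{p-1}}(1+\pi_L)^\nu$, and the orthogonality relation $\sum_{\nu \in \mu_{p-1}}\nu^j \equiv (p-1)\cdot\mathbf{1}_{(p-1)\mid j}\pmod{p}$ kills all intermediate coefficients of the expansion. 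Matching the resulting $v_L$-valuation of $1 + Tr_\Delta(x^{1/p})$ with $v_L(p) = v_L(-\gamma^{p-1}) = p(p-1)$ places $\alpha_M$ in $\A_{M/K} = \p_M^{-(p-1)}$, since $e(L/M) = p-1$ so that valuations of elements of $M$ in $L$ scale by $p-1$.

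For the normal basis and self-duality properties, I would exploit that $M$ and $K'$ are linearly disjoint over $K$ (as their degrees $p$ and $p-1$ are coprime), giving $\Gal(L/K) = H \times \Delta$. The element $Tr_\Delta(x^{1/p})$ is then identified with the descent of $x^{1/p}$ from $L$ to $M$, and one expands
\[
Tr_{M/K}(\alpha_M\cdot h(\alpha_M)) = \frac{1}{p^2}Tr_{M/K}\bigl[(1 + Tr_\Delta(x^{1/p}))(1 + h(Tr_\Delta(x^{1/p})))\bigr],
\]
using the factorisation $Tr_{L/K} = Tr_{M/K}\circ Tr_{L/M}$ to convert the resulting sum into a single trace from $L$ down to $K$ of products $x^{\mu/p}\cdot h(x^{\nu/p})$, $\mu,\nu \in \mu_{p-1}$. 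These traces are controlled by Kummer theory and by the explicit identities for Dwork's series from \cite{Pickett}. The main obstacle is this trace computation: showing that diagonal terms $h = 1$ yield $1$ and off-diagonal contributions cancel requires careful accounting of the $(p-1)^2$ cross terms together with the $1/p^2$ normalisation, ultimately resting on the orthogonality of the characters of $\mu_{p-1}$ combined with the structure of the Kummer extension $L/K'$ encoded in Dwork's power series.
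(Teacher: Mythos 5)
The paper's own proof of this lemma is a one-line citation: the statement is exactly \cite[Theorem 12]{Pickett}, which is where the hard work lives. Your proposal instead sets out to reprove that theorem from scratch. The overall plan is viable, and the integrality part is essentially right: $Tr_\Delta(x^{1/p})=\sum_{\nu\in\mu_{p-1}}(1+\pi_L)^\nu$, the sums $\sum_\nu\binom{\nu}{k}$ vanish (in fact exactly, not just mod $p$) for $1\le k\le p-2$ while the constant terms contribute $1+(p-1)=p$, so $v_L\bigl(1+Tr_\Delta(x^{1/p})\bigr)=p-1$ and $v_M(\alpha_M)=-(p-1)$, matching $\A_{M/K}=\p_M^{-(p-1)}$.

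The genuine gap is that the decisive step --- the verification that $Tr_{M/K}(\alpha_M\cdot h(\alpha_M))=\delta_{h,1}$ --- is announced rather than performed: you say it ``requires careful accounting of the $(p-1)^2$ cross terms'' and rests on orthogonality, but you never do the accounting, and this computation \emph{is} the lemma. It is in fact short once set up correctly: writing $h=\wt h^t|_M$ and using $Tr_{M/K}\bigl(Tr_{L/M}(a)\,c\bigr)=Tr_{L/K}(ac)$ for $c\in M$ (which avoids the $(p-1)^2$ terms and the missing $1/(p-1)$ normalisation your version would need), one gets $Tr_{L/K}\bigl(x^{1/p}\sum_\nu\zeta^{t\nu}x^{\nu/p}\bigr)$; Kummer theory kills every term except $\nu=-1$, leaving $p\,Tr_{K'/K}(\zeta^{-t})$, which equals $p(p-1)$ if $t=0$ and $-p$ otherwise, and together with $Tr_{M/K}(1)=p$ and $Tr_{L/K}(x^{1/p})=0$ this gives exactly $\delta_{t,0}$ after dividing by $p^2$. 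You also never return to the normal-basis-generation property, which you list as one of three things to check; it does not need a separate argument, since $\A_{M/K}$ is unimodular for the trace form, so a self-dual family of conjugates inside $\A_{M/K}$ automatically spans it over $\bo_K$ --- but that observation has to be made. As written, the proposal is an outline of Pickett's proof with its core left as an acknowledged obstacle, whereas the paper simply invokes that reference.
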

\begin{proof}This Lemma is a consequence of \cite[Theorem 12]{Pickett}.
\end{proof}

The extension $L/K'$ is
Kummer with generator $x^{1/p}$ so its Galois group is generated by
the automorphism defined by $x^{1/p}\mapsto\zeta x^{1/p}$. Further
by Galois theory the restriction of this automorphism to $M$
generates $H=\Gal(M/K)$. This enables us to fix a generator $h$ of $H$ and
the irreducible character $\chi$ of $\Gal(M/K)$ such that $M$ is
the fixed subfield of $M_{p,2}$ by $\ker(\chi)$ (this
condition only determines $\chi$ up to a prime to $p$ power).
\begin{notation}\label{gchi}
let $\wt h\in\Gal(L/K')$ be such
that $\wt h\big(x^{1/p}\big)=\zeta x^{1/p}$ and set $h=\wt h|_{M}$.
Let $\chi$ be the irreducible character of $\Gal(M_{p,2}/K)$ such that
$\ker(\chi)=\Gal(M_{p,2}/M)$ and $\chi(h)=\zeta$. 
\end{notation}
We can now state the theorem that we will prove in this
subsection. Recall the notations from Equation \ref{zetaxi} and that
$Tr=Tr_{K/\Q_p}$.
\begin{theorem}\label{norm_resolve_theorem}
There exists a choice of the transversal
$\Omega$ defining the norm-resolvent such that
$\mathcal{N}_{K/\Q_p}(\alpha_{M}\mid\chi_0)=1$ and, for any $j\in\{1,\ldots,p-1\}$:
\[\mathcal{N}_{K/\Q_p}(\alpha_{M}\mid\chi^j)=\xi^{j(2-j^{1-p})Tr(\vareps)}\enspace,\]
where $j^{1-p}$ is the inverse of $j^{p-1}$ in $\Z_p$. 
\end{theorem}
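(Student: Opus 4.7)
The plan is to compute the resolvent $(\alpha_M\mid\chi^j)$ explicitly, pass to the norm-resolvent via the transversal of Proposition \ref{transversal}, and then match exponents modulo $p^2$.

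First, since $M/K$ and $K'/K$ are totally ramified of coprime degrees $p$ and $p-1$, $M\cap K'=K$ and $\Gal(L/K)\cong H\times\Delta$ is abelian; in particular $\wt h$ commutes with every $\wt s_\nu\in\Delta$. Using Proposition \ref{delta} one gets
$$\wt h^k\bigl(Tr_\Delta(x^{1/p})\bigr)=Tr_\Delta(\zeta^k x^{1/p})=\sum_{\nu\in\mu_{p-1}}\zeta^{k\nu}\,x^{\nu/p}.$$
Inserting this into the abelian resolvent formula $(\alpha_M\mid\chi^j)=\sum_k \alpha_M^{h^k}\zeta^{-jk}$ and applying orthogonality of the $p$-th roots of unity yields $(\alpha_M\mid\chi^0)=Tr_{M/K}(\alpha_M)=1$, and for $1\le j\le p-1$, $(\alpha_M\mid\chi^j)=x^{[j]/p}$, where $[j]$ is the Teichm\"uller lift of $j$ in $\mu_{p-1}\subset\Z_p$.

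Second, pick the transversal $\Omega$ of Proposition \ref{transversal} so that $\omega(x^{1/p})=\sigma(x)^{1/p}$ for each $\omega\in\Omega$, where $\sigma=\omega|_{K'}\in\Sigma'$. Since $\sigma$ fixes $\zeta$, $\chi^{j\omega^{-1}}=\chi^j$, and the multiplicativity of $y\mapsto y^{[j]}$ on $1+D(1^-)$ gives
$$\mathcal{N}_{K/\Q_p}(\alpha_M\mid\chi^j)=\prod_{\sigma\in\Sigma'}\sigma(x)^{[j]/p}=N_{K'/\Q_p(\zeta)}(x)^{[j]/p}.$$
To compute the norm, observe that $\Sigma'$ permutes the $\eta^{p^n}$ cyclically, so applying its elements to $x=\prod_n E_\gamma(\eta^{p^n})^{\vareps_n}$ and reindexing yields $N_{K'/\Q_p(\zeta)}(x)=\prod_n E_\gamma(\eta^{p^n})^T$ with $T=\sum_n \vareps_n$. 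The inner product is a $p$-th root of unity: raising it to the $p$-th power and using the convergent identity $E_\gamma(a)^p=\exp(p\gamma a-p\gamma a^p)$ makes the exponents telescope to $0$, and comparing its $\gamma$-linear term with that of $\zeta^k=E_\gamma(1)^k$ pins it down to $\zeta^{Tr(\eta)}$. Hence $N_{K'/\Q_p(\zeta)}(x)=\zeta^{T\cdot Tr(\eta)}=\zeta^{Tr(\vareps)}$.

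Finally, $\xi^{Tr(\vareps)}$ is a $p$-th root of $\zeta^{Tr(\vareps)}$ (since $\xi^p=\zeta$), so Proposition \ref{transversal} permits the choice $N_{K'/\Q_p(\zeta)}(x)^{1/p}=\xi^{Tr(\vareps)}$, giving $\mathcal{N}(\alpha_M\mid\chi^j)=\xi^{[j]\,Tr(\vareps)}$. A direct $p$-adic check shows $[j]\equiv j(2-j^{1-p})\pmod{p^2}$: since $j\equiv[j]\pmod p$, one has $(j-[j])^2\equiv 0\pmod{p^2}$, hence $[j]^2\equiv 2j[j]-j^2\pmod{p^2}$; dividing by $[j]$ and using $j^{1-p}\equiv j/[j]\pmod{p^2}$ (itself a consequence of $j^p\equiv[j]^p=[j]\pmod{p^2}$) yields the claim. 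As $\xi$ has order exactly $p^2$, this identifies the two exponents. The principal difficulty is coherently tracking the many choices of $p$-th roots — of $x$, of each $\sigma(x)$, and of $N_{K'/\Q_p(\zeta)}(x)$ — so that they align with the transversal from Proposition \ref{transversal}; a secondary subtlety is that the naive identity $\prod_n E_\gamma(\eta^{p^n})=1$, suggested by formal $\exp$-additivity, fails due to non-convergence of $\exp$ on the unit circle, forcing the detour through $E_\gamma^p$.
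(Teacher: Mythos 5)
Your proposal is correct and follows essentially the same route as the paper: compute $(\alpha_M\mid\chi^j)=x^{[j]/p}$ by orthogonality (the paper writes the Teichm\"uller lift $[j]$ as $\mu^s$), evaluate $N_{K'/\Q_p(\zeta)}(x)=\zeta^{Tr(\vareps)}$ via the telescoping of Dwork exponentials (the paper's Lemma \ref{xiui}), fix the transversal as in Proposition \ref{transversal} so the product of conjugates of $x^{1/p}$ is $\xi^{Tr(\vareps)}$, and identify $[j]\equiv j(2-j^{1-p})\bmod p^2$. Your derivations of the norm (comparing $\gamma$-linear terms of $p$-th roots of unity rather than the paper's power-series identity $\prod_nE_\gamma(\eta^{p^{n+j}}X)=E_\gamma(X)^{Tr(\eta)}$) and of the final congruence (via $j^p\equiv[j]\bmod p^2$ rather than raising $j+ap$ to the $(p-1)$-th power) are minor, valid variants.
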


Before we can prove this theorem we must calculate the properties of
certain elements. 
We begin with the computation of the norm $N_{K'/\Q_p(\zeta)}(x)$, which
establishes a new link between $x$ and $\vareps$. 
\begin{lemma}\label{xiui}
$N_{K'/\Q_p(\zeta)}(x)=\zeta^{Tr(\vareps)}$.
\end{lemma}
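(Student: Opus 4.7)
The plan is to express $N_{K'/\Q_p(\zeta)}(x)$ as a double product of Dwork exponentials, use Lemma \ref{mup-1} to factor out the scalars $\varepsilon_j$, and then identify a single ``core'' product as the desired power of $\zeta$ via a telescoping logarithmic computation.

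First, since $\sigma \in \Sigma'=\Gal(K'/\Q_p(\zeta))$ fixes $\gamma$ and acts on $\eta$ by $\eta\mapsto \eta^{p^n}$, I would use $\sigma_n(x)=x_{\sigma_n(\varepsilon)}=\prod_j E_\gamma(\varepsilon_j\eta^{p^{j+n}})$. Multiplying over $\sigma_n\in\Sigma'$ and reindexing with $m\equiv j+n\pmod d$ (using $\eta^{p^d}=\eta$) gives
\[
N_{K'/\Q_p(\zeta)}(x)\ =\ \prod_{j=0}^{d-1}\prod_{m=0}^{d-1}E_\gamma(\varepsilon_j\eta^{p^m})\enspace.
\]
Lemma \ref{mup-1} applied to each $\varepsilon_j\in\{0\}\cup\mu_{p-1}\subset\Z_p$ gives $E_\gamma(\varepsilon_j\eta^{p^m})=E_\gamma(\eta^{p^m})^{\varepsilon_j}$, and since every factor lies in $1+\gamma\bo_{K'}$, the $\Z_p$-exponentiation laws (which extend by continuity from the integer case) let me collect the exponents as
\[
N_{K'/\Q_p(\zeta)}(x)\ =\ A^{\sum_{j=0}^{d-1}\varepsilon_j}\ ,\qquad \text{where } A:=\prod_{m=0}^{d-1}E_\gamma(\eta^{p^m})\enspace.
\]

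The crux is then the identity $A=\zeta^{Tr(\eta)}$. The formal power series identity $\log E_\gamma(X)=\gamma X-\gamma X^p$ (immediate from $E_\gamma(X)=\exp(\gamma X-\gamma X^p)$ in the ring of formal series) yields, once one verifies that both sides define the same rigid-analytic function on $D(1^+)$, the pointwise identity $\log E_\gamma(a)=\gamma(a-a^p)$ for every $a\in\bo_K$. Applying this with $a=\eta^{p^m}$ and summing, the resulting expression telescopes to $\gamma(\eta-\eta^{p^d})=0$, so $A$ is a $p$-power root of unity in $K'$, hence a $p$-th root of unity since $K'/\Q_p$ has ramification index only $p-1$. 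To pin down which power of $\zeta$ it is, I would reduce modulo $\gamma^2$: using (\ref{exp}) one gets $A\equiv 1+\gamma Tr(\eta)\pmod{\gamma^2}$ and $\zeta^c\equiv 1+c\gamma\pmod{\gamma^2}$, whence $A=\zeta^{Tr(\eta)}$. Combined with the elementary equality $Tr(\varepsilon)=Tr(\eta)\sum_j\varepsilon_j$, which follows from $Tr(\eta^{p^j})=Tr(\eta)$ for all $j$, this completes the proof.

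The main technical obstacle is the justification of $\log E_\gamma(a)=\gamma(a-a^p)$ when the residue of $a$ lies outside $\mathbb{F}_p$. In that case $|\gamma(a-a^p)|_p=|\gamma|_p=p^{-1/(p-1)}$ sits on the boundary of the disc of convergence of $\exp$, so the convenient shortcut $E_\gamma(a)=\exp(\gamma(a-a^p))$ is unavailable: the corresponding exponential series simply does not converge. One must instead argue that $\log E_\gamma$, viewed as a rigid-analytic function on $D(1^+)$, has global Taylor expansion at $0$ equal to the polynomial produced by formal composition $\log\circ E_\gamma$, namely $\gamma X-\gamma X^p$, and thus is given by that polynomial on all of $\bo_K$.
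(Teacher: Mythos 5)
Your proof is correct, but the way you handle the crucial identity $\prod_{m=0}^{d-1}E_\gamma\big(\eta^{p^m}\big)=\zeta^{Tr(\eta)}$ is genuinely different from the paper's. The paper never touches the $p$-adic logarithm: it forms the product of the \emph{power series} $E_\gamma\big(\eta^{p^{n+j}}X\big)$, telescopes the exponent formally to obtain the identity of series $\prod_{n}E_\gamma\big(\eta^{p^{n+j}}X\big)=E_\gamma(X)^{Tr(\eta)}$ (both sides convergent on $D(1^+)$, the right-hand side via the binomial series $B_{Tr(\eta)}$ discussed earlier), and only then evaluates at $X=1$, where $E_\gamma(1)=\zeta$ by the chosen normalisation of $\gamma$. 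This keeps all $\exp$-manipulations at the formal level, where they are unconditionally valid, and completely sidesteps the boundary-of-convergence issue you rightly identify. Your route instead evaluates pointwise: you need $\log E_\gamma(a)=\gamma(a-a^p)$ for $|a|_p=1$, conclude $\log A=0$ by telescoping, use $\ker(\log)=\mu_{p^\infty}(K')=\mu_p$ (correct, since the ramification index of $K'/\Q_p$ is $p-1$), and pin down the root of unity modulo $\gamma^2$ using (\ref{exp}) together with the fact that distinct $p$-th roots of unity remain distinct modulo $\gamma^2$. All of these steps are sound, and the one step you flag as the ``main technical obstacle'' is indeed true and provable: since $E_\gamma(X)-1=\sum_{n\ge1}e_nX^n$ with $\sup_{n\ge1}|e_n|_p\le p^{(1-p)/p^2}<1$, the standard criterion for substituting a convergent series into $\log(1+Y)$ (whose radius is $1$) applies on all of $D(1^+)$ and identifies the result with the formal composition $\gamma X-\gamma X^p$. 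So your argument closes, at the cost of invoking the composition theorem, the kernel of $\log$, and a separation argument for $p$-th roots of unity --- three external inputs that the paper's ``telescope first, evaluate last'' trick renders unnecessary. The remaining bookkeeping (the reindexing of the double product over $\Sigma'$, the collection of the exponents $\vareps_j$ by $\Z_p$-linearity of powers on principal units, and $Tr(\vareps)=Tr(\eta)\sum_j\vareps_j$) matches the paper's.
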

\begin{proof}
Recall that $\Sigma'=\Gal\big(K'/\Q_p(\zeta)\big)$ fixes
$\gamma\in\Q_p(\zeta)$ so, if $\sigma\in\Sigma'$ and $u\in\bo_K$,
$\sigma\big(E_\gamma(u)\big)=E_\gamma\big(\sigma(u)\big)$. Further
$\Sigma'$ acts on $\eta$ by raising it to the power $p^n$ for some
$0\le n\le d-1$, and fixes $\mu_{p-1}$. Therefore, writing
$\vareps=\sum_je_j\eta^{p^j}$ with coefficients $e_j\in\Z_p/p\Z_p$:
$$N_{K'/\Q_p(\zeta)}(x)=\prod_{n=0}^{d-1}\,\prod_{j=0}^{d-1}E_\gamma\big(\eta^{p^{n+j}}\big)^{e_j}\enspace.$$
For each $0\le j,n\le d-1$, consider the power series
$E_\gamma\big(\eta^{p^{n+j}}X\big)$, obtained by substituting $\eta^{p^{n+j}}X$
for $X$ in $E_\gamma$; it converges on $D(1^+)$ and
\begin{align*}
\prod_{n=0}^{d-1}E_\gamma\big(\eta^{p^{n+j}}X\big)
&= \exp\Big(\sum_{n=0}^{d-1}\big(\gamma \eta^{p^{n+j}}X-\gamma \eta^{p^{n+j+1}}X^p\big)\Big)\\
&= \exp\big(Tr(\eta)(\gamma X-\gamma X^p)\big)\\
&= E_\gamma(X)^{Tr(\eta)}\enspace,
\end{align*}
that also converges on $D(1^+)$. Evaluating at $X=1$ and using the
above formula for the norm yields the result, since $Tr(\vareps)=Tr(\eta)\sum_je_j$.
\end{proof}

In view of Proposition \ref{transversal}, we now fix our transversal
$\Omega$ of $\Omega_K$ in $\Omega_{\Q_p}$ so that the product of the
$\Omega$-conjugates of $x^{1/p}$ equals $\xi^{Tr(\vareps)}$, which is a
$p$-th root of $N_{K'/\Q_p(\zeta)}(x)$ by the preceding Lemma. We get:
\begin{lemma}\label{norm_lemma}
$\prod_{\omega\in\Omega}\left(x^{1/p}\right)^\omega=\xi^{Tr(\vareps)}$.
\end{lemma}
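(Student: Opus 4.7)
The plan is to combine the two preceding results almost mechanically: Proposition \ref{transversal} shows that the product over $\Omega$ of the Galois conjugates of $x^{1/p}$ can be made equal to \emph{any} prescribed $p$-th root of $N_{K'/\Q_p(\zeta)}(x)$, and Lemma \ref{xiui} identifies this norm as $\zeta^{Tr(\vareps)}$. So the only thing to verify is that $\xi^{Tr(\vareps)}$ is a legitimate $p$-th root of $\zeta^{Tr(\vareps)}$, which follows immediately from the normalisation $\xi^p = \zeta$ coming from Equation (\ref{zetaxi}) (since $\xi=\xi_2=\exp(2i\pi/p^2)$ and $\zeta=\xi_1=\exp(2i\pi/p)$).

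Concretely, the steps are: (i) cite Lemma \ref{xiui} to rewrite $N_{K'/\Q_p(\zeta)}(x)=\zeta^{Tr(\vareps)}$; (ii) observe that $\bigl(\xi^{Tr(\vareps)}\bigr)^p=\zeta^{Tr(\vareps)}$, so $\xi^{Tr(\vareps)}$ is a $p$-th root of $N_{K'/\Q_p(\zeta)}(x)$ in $\Q_p(\xi)\subset\Q_p^c$; (iii) apply Proposition \ref{transversal} with this particular choice of $p$-th root to produce a transversal $\Omega$ of $\Omega_K$ in $\Omega_{\Q_p}$ for which
\[
\prod_{\omega\in\Omega}\bigl(x^{1/p}\bigr)^\omega \;=\; \xi^{Tr(\vareps)}\enspace.
\]
This is exactly the transversal we have already agreed to fix in the sentence immediately preceding the lemma, so the statement follows.

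There is essentially no obstacle here, since all the work has been done in Proposition \ref{transversal} (extending $\Sigma'$-automorphisms to $\wt L$ while controlling their action on $x^{1/p}$) and in Lemma \ref{xiui} (the norm computation via the Dwork series). The only subtlety worth a brief remark in the proof is consistency of the implicit $p$-th root choice: the element $x^{1/p}$ was fixed once and for all when defining $L$, and Proposition \ref{transversal} allows us to adjust the liftings $\wt\sigma$ of each $\sigma\in\Sigma'$ (via the free parameters $k(n)$ with $n\ge 2$) so that the resulting product of conjugates lands on our chosen root $\xi^{Tr(\vareps)}$ rather than on another $p$-th root of $\zeta^{Tr(\vareps)}$.
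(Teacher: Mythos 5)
Your proposal is correct and follows exactly the paper's route: the lemma is obtained by combining Lemma \ref{xiui} (the norm computation) with Proposition \ref{transversal} (the freedom to choose the transversal so that the product of conjugates of $x^{1/p}$ hits any prescribed $p$-th root of the norm), noting that $\xi^p=\zeta$ makes $\xi^{Tr(\vareps)}$ such a root. The only nitpick is that the adjustable freedom in Proposition \ref{transversal} is realised through the choice of the $p$-th roots $\sigma(x)^{1/p}$ rather than through the parameters $k(n)$ for $n\ge 2$, but since you invoke the proposition as stated this does not affect the argument.
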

\smallskip

Recall that $\Gal(M/K)=H=\langle h\rangle$, where $h$ is as in
Notation \ref{gchi}. Let $\Gal(L/M)=\Delta=\langle\delta\rangle$, then 
by Proposition \ref{delta}, $\delta=\wt s_\mu$ for some primitive
$(p-1)$-th root of unity $\mu$, and $\delta(\zeta)=\zeta^\mu$,
$\delta(x^{1/p})=x^{\mu/p}$. 
$$\xymatrix@=12pt{
{L}\ar@{-}@/0pc/[dd]\ar@{-}@/0pc/[drrr]\ar@{-}@/ur1pc/[drrr]^{\Delta=\langle\delta\rangle}\\
& & & M\ar@{-}@/0pc/[dd]\ar@{-}@/r1pc/[dd]^{H=\langle h\rangle}\\
K'\ar@{-}@/0pc/[drrr]\\
& & & K}$$
We now compute the resolvent for the normal basis generator
$\alpha_{M}$ that was defined in Lemma \ref{basis_gen_lemma}. 
\begin{proposition}
One has $(\alpha_M\mid\chi_0)=1$ and, for $j\in\{1,\ldots,p-1\}$:
\[(\alpha_{M}\mid\chi^j)=(x^{1/p})^{\mu^s}\enspace,\]
where $0\leq s\leq p-2$ is such that $\mu^s\equiv j\mod p$.
\end{proposition}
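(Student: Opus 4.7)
The plan is to compute the resolvent directly from the definition
$(\alpha_M\mid\chi^j)=\sum_{k=0}^{p-1}\alpha_M^{h^k}\chi^j(h^{-k})$
by lifting the action of $h$ to an automorphism of $L$ that we can explicitly evaluate on $x^{1/p}$, and then killing off most terms with a standard orthogonality relation $\sum_{k=0}^{p-1}\zeta^{ak}=p\cdot\mathbf{1}_{p\mid a}$.

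First I would observe that $\mathrm{Gal}(L/K)\subset\mathrm{Gal}(K_{p,2}/K)$ is abelian, so the lift $\widetilde h\in\mathrm{Gal}(L/K')$ from Notation~\ref{gchi} commutes with $\Delta=\langle\delta\rangle$. Since $\widetilde h$ fixes $K'$, and in particular $\zeta$, one obtains by induction $\widetilde h^k(x^{1/p})=\zeta^k x^{1/p}$. Because $\widetilde h$ and $\delta$ commute, and using Proposition~\ref{delta} together with the iterated identity $\delta^t(x^{1/p})=(x^{1/p})^{\mu^t}$ (which follows by induction from $\delta(x^{1/p})=(x^{1/p})^\mu$ and the fact that raising to $\mu$ is a well-defined homomorphism on $1+\mathfrak{m}_L$ via the series $B_\mu$), I would then compute
\[
\widetilde h^k\bigl(Tr_{\Delta}(x^{1/p})\bigr)=\sum_{t=0}^{p-2}\delta^t\bigl(\zeta^k x^{1/p}\bigr)=\sum_{t=0}^{p-2}\zeta^{k\mu^t}(x^{1/p})^{\mu^t}.
\]
Since $\alpha_M\in M$, we have $h^k(\alpha_M)=\widetilde h^k(\alpha_M)$, so
\[
h^k(\alpha_M)=\frac{1}{p}+\frac{1}{p}\sum_{t=0}^{p-2}\zeta^{k\mu^t}(x^{1/p})^{\mu^t}.
\]

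Next, since $H$ is abelian of order $p$ and $\chi(h)=\zeta$, I plug into Definition~\ref{resnormres}, interchange the two finite sums, and obtain
\[
(\alpha_M\mid\chi^j)=\frac{1}{p}\sum_{k=0}^{p-1}\zeta^{-jk}+\frac{1}{p}\sum_{t=0}^{p-2}(x^{1/p})^{\mu^t}\sum_{k=0}^{p-1}\zeta^{k(\mu^t-j)}.
\]
Here the first inner sum equals $p$ if $j=0$ and $0$ otherwise, giving $(\alpha_M\mid\chi_0)=1$ (the second double sum contributes nothing since $\mu^t\not\equiv 0\pmod p$ for any $t$). For $j\in\{1,\dots,p-1\}$ the first sum vanishes; and since $\mu$ is a primitive $(p-1)$-th root of unity in $\mu_{p-1}$, its reduction modulo $p$ generates $(\mathbb{Z}/p\mathbb{Z})^\times$, so there is exactly one $s\in\{0,\dots,p-2\}$ with $\mu^s\equiv j\pmod p$, yielding the formula $(\alpha_M\mid\chi^j)=(x^{1/p})^{\mu^s}$.

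The only subtlety — and the main thing to justify cleanly — is that $\widetilde h$ and $\delta$ really commute and that the exponential notation $(x^{1/p})^{\mu^t}$ behaves as expected under iteration; both reduce to the abelian structure of $\mathrm{Gal}(K_{p,2}/K)$ and the fact that $x^{1/p}-1$ is a uniformizer of $L$, so $B_\mu(x^{1/p}-1)$ converges and respects products. Everything else is a direct computation modelled on Notation~\ref{gchi} and Proposition~\ref{delta}.
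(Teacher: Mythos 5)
Your proposal is correct and follows essentially the same route as the paper: expand the resolvent from the definition, lift $h$ to $\widetilde h\in\Gal(L/K')$ to evaluate $h^k\bigl(Tr_\Delta(x^{1/p})\bigr)=\sum_{t}\zeta^{k\mu^t}(x^{1/p})^{\mu^t}$, then interchange sums and apply the orthogonality relation $\sum_{k=0}^{p-1}\zeta^{k(\mu^t-j)}=p\cdot\mathbf{1}_{\mu^t\equiv j}$. The only cosmetic difference is that you route the computation through the commutation of $\widetilde h$ with $\delta$, whereas the paper applies $\widetilde h^t$ directly to each power $(x^{1/p})^{\mu^s}$; both rest on the same facts from Proposition~\ref{delta} and Notation~\ref{gchi}.
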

\begin{proof}
The definitions of the resolvent and of $\alpha_M$ yield:
$$(\alpha_{M}\mid\chi^j)=\sum\limits_{t=0}^{p-1}\frac{1+h^t\big(Tr_{\Delta}(x^{1/p})\big)}{p}\,\chi^j(h^{-t})\enspace.$$
Let $t\in\{0,1,\ldots,p-1\}$, then:
$$h^t\big(Tr_{\Delta}(x^{1/p})\big)=\sum_{s=0}^{p-2}\wt
h^t\left(\big(x^{1/p}\big)^{\mu^s}\right)=\sum_{s=0}^{p-2}\zeta^{t\mu^s}\big(x^{1/p}\big)^{\mu^s}\enspace.$$

If $j=0$, then $\chi^j$ is the trivial character $\chi_0$, so that:
$$(\alpha_{M}\mid\chi_0)=1+\frac{1}{p}\sum_{t=0}^{p-1}\sum_{s=0}^{p-2}\zeta^{{t\mu^s}}\big(x^{1/p}\big)^{\mu^s}
=1+\frac{1}{p}\sum_{s=0}^{p-2}\left(\sum_{t=0}^{p-1}\zeta^{{t\mu^s}}\right)\big(x^{1/p}\big)^{\mu^s}$$
and $\sum\limits_{t=0}^{p-1}\zeta^{{t\mu^s}}=0$, so $(\alpha_{M}\mid\chi_0)=1$.

We now assume $j\neq 0$. Since $\chi^j(h)=\zeta^j$, hence
$\sum\limits_{t=0}^{p-1}\chi^j(h^{-t})=0$, we get
$$(\alpha_{M}\mid\chi^j)=\frac{1}{p}\sum_{s=0}^{p-2}\left(\sum_{t=0}^{p-1}\zeta^{{t(\mu^s}-j)}\right)\big(x^{1/p}\big)^{\mu^s}\enspace;$$
we observe that
\[\sum_{t=0}^{p-1}\zeta^{{t(\mu^s}-j)}=\left\{
\begin{array}{ll}
p&\text{if $\mu^s\equiv j\mod p$,}\\
0&\text{otherwise,}
\end{array}\right.\]
hence the result.
\end{proof}

We are now in a position to prove Theorem \ref{norm_resolve_theorem}.

 \begin{proof}[Proof of Theorem \ref{norm_resolve_theorem}]
When $j=0$ the result is clear. We now assume that $j\neq 0$. Recall
the choice we made before Lemma \ref{norm_lemma} for our transverse
$\Omega$ of $\Omega_K$ in $\Omega_{\Q_p}$. Since
$\chi^j$ takes values in $\Q_p(\zeta)$, which is fixed by $\Omega$,
Definition \ref{resnormres} of the norm-resolvent yields:
\begin{align*}
\mathcal{N}_{K/\Q_p}(\alpha_{M}\mid\chi^j)&=
\prod\nolimits_{\Omega}(\alpha_{M}\mid\chi^j)^{\omega}
=\prod\nolimits_{\Omega}\big((x^{1/p})^{\mu^s}\big)^\omega\\
&=\left(\prod\nolimits_{\Omega}(x^{1/p})^{\omega}\right)^{\mu^s}
=\xi^{\mu^sTr(\vareps)}\\
\end{align*} 
using Lemma \ref{norm_lemma}. Writing $\mu^s\equiv j+ap\mod p^2$ for some $a\in\{0,1,\ldots,p-1\}$
and raising to the $(p-1)$-th power yields $1\equiv
j^{p-1}-apj^{p-2}\mod p^2$, thus:
$$ap\equiv(j^{p-1}-1)j^{2-p}\equiv j-j^{2-p}\mod p^2\enspace.$$
It follows that $\mu^s\equiv
j(2-j^{1-p})\mod p^2$, which ends the proof of Theorem \ref{norm_resolve_theorem}.
\end{proof}
%
%
\subsection{The modified twisted Galois Gauss Sum}
Recall from Notation \ref{gchi} that $\chi$ is the character of
$\Gal(M_{p,2}/K)$ such that $M$ is the fixed
field of $\ker(\chi)$ and $\chi(h)=\zeta$ for our choice of generator
$h$ of $H=\Gal(M/K)$. Our character $\chi$ is weakly ramified so we
know by Corollary \ref{tauQ} that there exists $v_\chi\in\bo_K^\times$
such that: 
\begin{equation}\label{condition}
\forall\,u\in\bo_K\ ,\quad\chi(1+up)^{-1}=\zeta^{Tr(uv_\chi)}\enspace. 
\end{equation} 
We are going to show that $v_\chi$ can be chosen so that its trace
from $K$ to $\Q_p$ equals that of $\vareps$. In order to do that, we
need some properties of the $p$-th Hilbert symbol (see 
\cite[Ch.IV]{Fesenko-Vostokov}). We have $\car(K')=0$ and $\zeta\in
K'$. Let $\mu_p=\langle\zeta\rangle$ denote the group of $p$-th roots
of unity in $\Q_p^c$.
The $p$-th Hilbert symbol of $K'$ is defined as 
$$\begin{array}{rccl}
(\,\ ,\ )_{p,K'}:&K'^{{\times}}{\times} K'^{{\times}}&\longrightarrow& \mu_p \\ 
&(a,b)&\longmapsto& \dfrac{\theta_{K'}(a)(b^{1/p})}{b^{1/p}}\enspace.
\end{array}$$
\begin{proposition}\label{Hilbert_chi_lemma}
For all $u\in\bo_K$, we have:
\[(1+up,x)_{p,K'}=\chi(1+up)^{-1}\enspace.\]
\end{proposition}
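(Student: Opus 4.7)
The plan is to translate the Hilbert symbol computation into a question about the action of $\theta_{L/K}(1+up)$ on $x^{1/p}$, using norm-compatibility of the Artin map to pass from $\theta_{K'}$ to $\theta_{L/K}$, and then exploit the explicit description of $\Gal(L/K)$ from Subsection \ref{sub:lifting}.

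First I would verify that $L/K$ is actually abelian. A direct computation on $y=x^{1/p}$, using the fact that $\wt s_\mu(y)=y^\mu$ and $\wt h(y^\mu)=(\zeta y)^\mu=\zeta^\mu y^\mu=\wt s_\mu(\zeta y)=\wt s_\mu\wt h(y)$, shows that $\wt h$ and $\wt s_\mu$ commute. Since $[L:K]=p(p-1)$ and $\langle\wt h\rangle$, $\Delta=\langle\wt s_\mu\rangle$ have coprime orders $p$ and $p-1$, we conclude $\Gal(L/K)=\langle\wt h\rangle\times\Delta$ and in particular $L\subseteq K^{ab}$.

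Second, since $L/K$ is abelian and $1+up\in K^\times\subseteq K'^\times$, the norm-compatibility of local class field theory gives
$$\theta_{L/K'}(1+up)=\theta_{L/K}\big(N_{K'/K}(1+up)\big)=\theta_{L/K}(1+up)^{p-1}\enspace,$$
as elements of $\Gal(L/K')\subseteq\Gal(L/K)$. Writing $\theta_{L/K}(1+up)=\wt h^a\delta^b$, we obtain
$$\theta_{L/K'}(1+up)=\wt h^{a(p-1)}\delta^{b(p-1)}=\wt h^{-a}\enspace,$$
because $\wt h^p=1$ (so $a(p-1)\equiv -a\mod p$) and $\delta^{p-1}=1$. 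Therefore, by definition of $\wt h$ from Notation \ref{gchi},
$$(1+up,x)_{p,K'}=\frac{\theta_{K'}(1+up)(x^{1/p})}{x^{1/p}}=\frac{\wt h^{-a}(x^{1/p})}{x^{1/p}}=\zeta^{-a}\enspace.$$

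Third, on the other side we compute $\chi(1+up)$ directly. Since $\chi$ is the character of $\Gal(M/K)$ with $\chi(h)=\zeta$, viewed on $K^\times$ via $\theta_{M/K}$, and since $\Delta$ fixes $M$ while $\wt h|_{M}=h$, we have $\theta_{M/K}(1+up)=h^a$, whence $\chi(1+up)=\zeta^a$. Combining yields the claimed equality $(1+up,x)_{p,K'}=\zeta^{-a}=\chi(1+up)^{-1}$.

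The only potential obstacle is the invocation of norm-compatibility in the form above; it requires $L/K$ to be abelian, which is precisely what the preliminary commutation check provides. Everything else is bookkeeping with the decomposition $\Gal(L/K)=\langle\wt h\rangle\times\Delta$ and the crucial observation that raising to the $(p-1)$-th power annihilates the $\Delta$-component while inverting the $\wt h$-component modulo $p$.
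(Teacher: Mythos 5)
Your proof is correct, but it takes a genuinely different route from the paper's. Both arguments ultimately rest on the same norm-compatibility of the reciprocity map, $\theta_{L/K'}(1+up)=\theta_{L/K}\bigl(N_{K'/K}(1+up)\bigr)=\theta_{L/K}(1+up)^{p-1}$ (your preliminary check that $L/K$ is abelian, needed to invoke this, is fine --- though it is immediate anyway, since $L=K'M$ is a compositum of two abelian extensions of $K$). The paper uses this compatibility only at the level of kernels: it first shows that $u\mapsto(1+up,x)_{p,K'}$ and $u\mapsto\chi(1+up)^{-1}$ are additive characters of $k$ cutting out the same index-$p$ subgroup, so that they agree up to a $t$-th power, and then pins down $t=1$ by exhibiting an explicit $w$ with $\wt h=\theta_{L/K'}(1+wp)$ and evaluating both sides there. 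You instead exploit the direct product decomposition $\Gal(L/K)=\langle\wt h\rangle\times\Delta$: writing $\theta_{L/K}(1+up)=\wt h^a\delta^b$, raising to the $(p-1)$-th power kills the $\Delta$-component and inverts the $\langle\wt h\rangle$-component modulo $p$, so both sides are computed exactly (as $\zeta^{-a}$) in a single stroke. Your version is shorter and more transparent: it dispenses with the auxiliary field $L_{p,2}$, the ``same kernel'' lemma, and the normalisation step at $u=w$. What the paper's longer route buys is essentially a by-product --- the identification of $\Gal(L/K')$ as $\langle\theta_{L/K'}(1+ap)\rangle$ with an explicit generator --- rather than anything your argument is missing.
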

\begin{proof}
The proof is in several steps. Let $L_{p,2}$ be the compositum of the
fields $L_i$ for $i\in\{1,\ldots,r\}$.
Recall that we identify the residue field
$k=\{0\}\cup k^\times$ with $\{0\}\cup\mu_{q-1}\subset\bo_K$ through Teichm\"uller's
lifting. We first show:   
\begin{lemma}\label{gal}
$\Gal(L_{p,2}/K')=\theta_{L_{p,2}/K'}(U_K^1/U_K^2)=\{\theta_{L_{p,2}/K'}(1+up):u\in k\}$.
\end{lemma}
\begin{proof}
First note that $L_{p,2}\subset K_{p,2}$, so $\theta_{L_{p,2}/K}$ is
trivial on $U_K^2$ and the same holds for $\theta_{L_{p,2}/K'}$ (for
instance using \cite[Theorem 6.16]{iwasawa}). Hence we may consider
$\theta_{L_{p,2}/K'}(U_K^1/U_K^2)$, which as a set clearly equals
$\{\theta_{L_{p,2}/K'}(1+up):u\in k\}$. 

By local class field theory, $\Gal(M_{p,2}/K)=\theta_{M_{p,2}/K}(U_K^1)$ and
the intersection of the kernel of $\theta_{M_{p,2}/K}$ with $U_K^1$ is $U_K^2$
(since $M_{p,2}\subset K_{p,2}$ and $[M:K]=q$). It follows that
$\Gal(M_{p,2}/K)=\theta_{M_{p,2}/K}(U_K^1/U_K^2)$.

Since $L_{p,2}=M_{p,2}K'$ with $K'/K$ and $M_{p,2}/K$ linearly
disjoint, the functorial properties of the Artin reciprocity map 
yield \cite[Theorem 6.9]{iwasawa}:
\begin{equation}\label{theta}
\theta_{L_{p,2}/K'}\big|_{M_{p,2}}=\theta_{M_{p,2}/K}\circ
N_{K'/K}\enspace,
\end{equation}
where $N_{K'/K}$ stands for the norm from $K'$ to $K$.
For $u\in k$ we have $1+up\in\bo_K$, so
$N_{K'/K}(1+up)=(1+up)^{p-1}\equiv 1-up\mod p^2\bo_K$. We get that $N_{K'/K}$ is
an isomorphism from $U_K^1/U_K^2$ into itself, and therefore
$$\theta_{L_{p,2}/K'}(U_K^1/U_K^2)\big|_{M_{p,2}}=\Gal(M_{p,2}/K)\enspace.$$
This yields the result using Galois theory, since the restriction map
$g\mapsto g|_{M_{p,2}}$ is an isomorphism from $\Gal(L_{p,2}/K')$ to $\Gal(M_{p,2}/K)$.
\end{proof}
\begin{lemma}
There exists $t\in\{1,\ldots,p-1\}$ such that, for all $u\in k$, 
$$(1+up,x)^t_{p,K'}=\chi(1+up)^{-1}\enspace.$$
\end{lemma}
\begin{proof}
By definition, $\chi(1-up)=1$ if and only if $\theta_{M_{p,2}/K}(1-up)$
fixes $M$. This is in turn equivalent to $\theta_{L_{p,2}/K'}(1+up)$
fixing $L$, since $L=MK'$ and we know by (\ref{theta}) that
$\theta_{L_{p,2}/K'}(1+up)$ is the only lifting of 
$\theta_{M_{p,2}/K}(1-up)$ to $L_{p,2}/K'$. Since $L=K'(x^{1/p})$ and by
definition of the Hilbert symbol, we get:
$$\chi(1-up)=1\Leftrightarrow(1+up,x)_{p,K'}=1\enspace.$$
The properties of the Hilbert symbol and the fact that
$\theta_{L_{p,2}/K'}$ is trivial on $U_K^2$ give us
\begin{align*}
(1+up,x)_{p,K'}(1+u'p,x)_{p,K'}&=(1+up+u'p+uu'p^2,x)_{p,K'}\\
&=(1+(u+u')p,x)_{p,K'}
\end{align*}
for $u,u'\in k$, which means that $u\mapsto(1+up,x)_{p,K'}$ is a
character of the additive group of $k$. We also know that
$u\mapsto\chi(1-up)$ is a character of the additive
group of $k$ (since $u\mapsto\theta_{M_{p,2}/K}(1-up)$ is). Therefore $u\mapsto(1+up,x)_{p,K'}$ and
$u\mapsto\chi(1-up)$ are characters of the same $p$-elementary abelian
group which have the same kernel of index $p$. It follows that
$(1+up,x)^t_{p,K'}=\chi(1-up)=\chi(1+up)^{-1}$ for some $t$. 
\end{proof}

To finish the proof of Proposition \ref{Hilbert_chi_lemma}, note that
the arguments in the proof of Lemma \ref{gal} can be adjusted to show
that $\Gal(L/K')=\langle\theta_{L/K'}(1+ap)\rangle$ for an adequate
element $a\in\bo_K$. Hence
there exists an integer $n\in\{1,\ldots, p-1\}$ such that 
$$\wt h=\theta_{L/K'}(1+ap)^n=\theta_{L/K'}\big((1+ap)^n\big)=\theta_{L/K'}\big(1+wp+p^2b\big)\enspace,$$
for some $b\in\bo_K$, where we let $w\in\mu_{q-1}$ be such that
$w\equiv na\mod p\Z_p$.
Since
$\theta_{L/K'}\big(1+wp+p^2b\big)|_{M}=\theta_{M/K}(1-wp)=\theta_{L/K'}(1+wp)|_{M}$,
we get that
$$\wt h=\theta_{L/K'}(1+wp)\quad\mbox{ and }\quad h=\wt h|_{M}=\theta_{M/K}(1-wp)\enspace,$$
hence 
$$\chi(1+wp)^{-1}=\chi(1-wp)=\chi(h)=\zeta=\frac{\wt
  h(x^{1/p})}{x^{1/p}}=(1+wp,x)_{p,K'}\enspace,$$
which implies $t=1$ in the preceding lemma.
\end{proof} 

We can now show the announced result, recalling that $Tr=Tr_{K/\Q_p}$.
\begin{corollary}\label{u=v}
There exists $v_\chi\in\bo_K^\times$ such that $v_\chi$ satisfies 
condition (\ref{condition}) from Corollary \ref{tauQ} for $\chi$ and
$Tr(v_\chi)=Tr(\vareps)$. 
\end{corollary}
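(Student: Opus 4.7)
The natural approach is to convert condition~(\ref{condition}) into a Hilbert symbol computation via Proposition~\ref{Hilbert_chi_lemma}, and then exploit the explicit Dwork-exponential shape of $x$. By Proposition~\ref{Hilbert_chi_lemma}, for every $u\in\bo_K$ one has $\chi(1+up)^{-1}=(1+up,x)_{p,K'}$, so it suffices to evaluate the right-hand side as a power of $\zeta$ and read off $v_\chi$.

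Recall from Formula~(\ref{x(u)}) that $x=x_\vareps=\prod_{j=0}^{d-1}E_\gamma(\eta^{p^j})^{\vareps_j}$, where we have decomposed $\vareps=\sum_j\vareps_j\eta^{p^j}$ with $\vareps_j\in\{0\}\cup\mu_{p-1}\subset\Z_p$. By bi-multiplicativity of the Hilbert symbol,
\[(1+up,x)_{p,K'}=\prod_{j=0}^{d-1}(1+up,E_\gamma(\eta^{p^j}))_{p,K'}^{\vareps_j}\enspace.\]
The heart of the proof is then to establish the explicit formula
\[(1+up,E_\gamma(\beta))_{p,K'}=\zeta^{Tr(u\beta)}\qquad\text{for all }u,\beta\in\bo_K\enspace.\]
This is a Hilbert symbol computation tailored to Dwork's exponential; I would derive it from the Artin--Hasse / Iwasawa explicit reciprocity formulas for the cyclotomic field $K'=K(\zeta)$, using crucially the identity $\log E_\gamma(\beta)=\gamma\beta-\gamma\beta^p$ coming from Definition~\ref{dwork} together with the normalisation $E_\gamma(1)=\zeta$. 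This is where I expect the main obstacle to lie: one must unravel the reciprocity law with the correct sign and constant, bearing in mind that $p=-\gamma^{p-1}$ is not a uniformiser of $K'$ and that the ``base'' field for Iwasawa's formula is $\Q_p(\zeta)$ rather than $K$, so a trace from $K/\Q_p$ only appears after combining with the action of $\Sigma'$ on the $\eta^{p^j}$.

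Granted this explicit formula, linearity of $Tr=Tr_{K/\Q_p}$ together with the fact that the $\vareps_j\in\Z_p$ are $\Sigma$-fixed yields
\[(1+up,x)_{p,K'}=\zeta^{\sum_j\vareps_j Tr(u\eta^{p^j})}=\zeta^{Tr(u\sum_j\vareps_j\eta^{p^j})}=\zeta^{Tr(u\vareps)}\enspace.\]
Combining with Proposition~\ref{Hilbert_chi_lemma} gives $\chi(1+up)^{-1}=\zeta^{Tr(u\vareps)}$ for every $u\in\bo_K$. Since $\vareps\in k^\times$, viewed through the identification of $k$ with $\{0\}\cup\mu_{q-1}\subset\bo_K$, is a unit in $\bo_K^\times$, this shows that $v_\chi=\vareps$ satisfies condition~(\ref{condition}). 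The uniqueness modulo $p$ of $v_\chi$ from Corollary~\ref{tauQ} legitimises this choice, and trivially $Tr(v_\chi)=Tr(\vareps)$, which proves the corollary.
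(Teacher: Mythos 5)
There is a genuine gap at the heart of your argument: the explicit formula $(1+up,\,E_\gamma(\beta))_{p,K'}=\zeta^{Tr(u\beta)}$ for all $u,\beta\in\bo_K$ is asserted but never proved. You yourself flag it as ``where I expect the main obstacle to lie'', and indeed it is the entire content of the statement --- everything before and after it (bi-multiplicativity of the symbol, linearity of the trace, reading off $v_\chi=\vareps$) is routine. Deriving such a formula from the Artin--Hasse/Iwasawa explicit reciprocity laws is a delicate matter of normalisations (the base field for those formulas is $\Q_p(\zeta)$, not $K'$; the uniformiser is $\gamma$, not $p$; and the sign conventions for $\theta$ differ between sources), and until it is carried out with the correct constant the proof is incomplete. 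Note also that, if your formula were established, you would obtain the strictly stronger conclusion $v_\chi\equiv\vareps\bmod p$, which is more than the corollary claims.

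The paper avoids this machinery entirely by proving less. It only evaluates $(1+up,x)_{p,K'}$ for $u\in\Z_p$: since then $1+up\in\Q_p(\zeta)$, the norm-compatibility of the Hilbert symbol pushes the computation down to $\bigl(1+up,\,N_{K'/\Q_p(\zeta)}(x)\bigr)_{p,\Q_p(\zeta)}$, where Lemma \ref{xiui} gives $N_{K'/\Q_p(\zeta)}(x)=\zeta^{Tr(\vareps)}$ by a direct manipulation of Dwork's power series, and the remaining symbol $(1+up,\zeta)_{p,\Q_p(\zeta)}$ is computed from the elementary Lubin--Tate fact $\theta_{\Q_p}(1+up)(\xi)=\xi^{1-up}$ together with the transfer map. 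Comparing with $\chi(1+up)^{-1}=\zeta^{uTr(v)}$ for the $v$ already furnished by Corollary \ref{tauQ} yields only $Tr(v)\equiv Tr(\vareps)\bmod p$ --- but that suffices, because $v_\chi$ is free modulo $p\bo_K$ and the trace $\bo_K\to\Z_p$ is surjective ($K/\Q_p$ being unramified), so one can set $v_\chi=v+pb$ with $Tr(b)$ chosen to make the traces equal on the nose. If you want to salvage your route, you must either supply a complete proof of your explicit formula or retreat to the restriction $u\in\Z_p$ as the paper does.
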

\begin{proof}
Throughout this proof we fix $u\in\mathbb{Z}_p/p\mathbb{Z}_p$.
We let $\ver:\Omega_{\Q_p}^{ab}\rightarrow\Omega_{\Q_p(\zeta)}^{ab}$
be the transfer map from $\Q_p$ to $\Q_p(\zeta)$. From \cite[Theorem
  6.16 and Formula (3) p.93]{iwasawa}, where $\ver$ is denoted as $t_{\Q_p(\zeta)/\Q_p}$,
we know that:
$$\theta_{\Q_p(\zeta)}(1+up)=\ver\big(\theta_{\Q_p}(1+up)\big)=\prod_{\tau\in\Gal(\Q_p(\zeta)/\Q_p)}\tau\,\theta_{\Q_p}(1+up)\,\tau^{-1}\enspace.$$
As $\Gal(\Q_p(\zeta)/\Q_p)$ and $\Omega_{\Q_p}^{ab}$ commute, we get
$\theta_{\Q_p(\zeta)}(1+up)=\theta_{\Q_p}(1+up)^{p-1}$.
By \cite[\S3.1 Remark after Theorem 2]{Serre-lubintate}, we know
that $\theta_{\Q_p}(1+up)(\xi)=\xi^{1-up}$, so that:
\begin{equation}\label{theta_xi_equation}
\theta_{\Q_p(\zeta)}(1+up)(\xi)=\xi^{1+up}\enspace.
\end{equation}
Using the properties of the Hilbert symbol we make the following derivation.
\begin{align*}
(1+up,x)_{p,K'}&=(1+up,N_{K'/\Q_p(\zeta)}(x))_{p,\Q_p(\zeta)}&\text{(from \cite[IV\S5]{Fesenko-Vostokov})}\\
&=\big(1+up,\zeta^{Tr(\vareps)}\big)_{p,\Q_p(\zeta)}&\text{(from
  Lemma \ref{xiui})}\\
&=\big(1+up,\zeta\big)_{p,\Q_p(\zeta)}^{Tr(\vareps)}
=\left(\frac{\theta_{\Q_p(\zeta)}(1+up)(\xi)}{\xi}\right)^{Tr(\vareps)}&\\
&=\left(\frac{\xi^{1+up}}{\xi}\right)^{Tr(\vareps)}&\text{(from Equation \ref{theta_xi_equation})}\\
&=(\xi^{up})^{Tr(\vareps)}=\zeta^{uTr(\vareps)}&
\end{align*}

On the other hand, let $v\in\bo_K^\times$ satisfying condition
(\ref{condition}). Then from Proposition \ref{Hilbert_chi_lemma} 
we know that $(1+up,x)_{p,K'}=\chi(1+up)^{-1}=\zeta^{Tr(uv)}$, and so we have 
$$(1+up,x)_{p,K'}=\zeta^{uTr(v)}\enspace.$$
Comparing with the former equality, this yields $Tr(\vareps)\equiv
Tr(v)\mod p\Z_p$, so let $a\in\Z_p$ such that
$Tr(\vareps)=Tr(v)+pa$. Since $K/\Q_p$ is unramified, there exists
$b\in\bo_K$ such that $Tr(b)=a$, so $Tr(\vareps)=Tr(v+pb)=Tr(v_\chi)$ if we
let $v_\chi=v+pb$, which proves the result using $(i)$ in Corollary \ref{tauQ}.
\end{proof}

We deduce the following expression for the modified twisted Galois
Gauss sum, using the statement and property $(ii)$ of Corollary
\ref{tauQ}. Note that since $p\not=2$,
$\frac{p^2}{4v_\chi}\bo_K=\pi^2\D_K$, so we may set $c_{K,2}=p^2/4v_\chi$.
\begin{theorem}\label{gauss_sum_theorem}
Let $v_\chi$ be as in Corollary \ref{u=v} and set $c_{K,2}=p^2/4v_\chi$. Then
$\tau_K^\star(\chi_0-\chi_0^{2})=1$ and, for any
$j\in\{1,\ldots,p-1\}$:   
\[\tau_K^\star(\chi^j-\chi^{2j})=\chi^j(j^{-1})\xi^{-jTr({\vareps})}\enspace. \] 
\end{theorem}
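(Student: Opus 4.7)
The plan is to derive the theorem essentially as a direct substitution into the formula of Corollary \ref{tauQ}, using Corollary \ref{u=v} to replace $Tr(v_\chi)$ by $Tr(\vareps)$, together with the specific choice $c_{K,2} = p^2/(4v_\chi)$.

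First I would dispatch the trivial character. Since $\chi_0$ is unramified, $\chi_0^2 = \chi_0$ (as we are in an abelian situation, $\psi_2(\chi_0) = \chi_0^2$), so by Equation (\ref{unram}) we have $\tau_K^\star(\chi_0 - \chi_0^2) = \tau_K^\star(\chi_0 - \psi_2(\chi_0)) = 1$. Before proceeding, I would verify that $c_{K,2} = p^2/(4v_\chi)$ is an admissible choice in Definition \ref{def:modif}: since $p \neq 2$ and $v_\chi \in \bo_K^\times$, one has $4v_\chi \in \bo_K^\times$, so $c_{K,2}\bo_K = p^2\bo_K = \pi^2\D_K$ (using that $K/\Q_p$ is unramified, so $\D_K = \bo_K$ and $\pi = p$). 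Consequently $v_{K,2} = p^2/c_{K,2} = 4v_\chi$.

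For $j \in \{1,\ldots,p-1\}$, the character $\chi^j$ is again weakly ramified. By part $(ii)$ of Corollary \ref{tauQ}, we may take $v_{\chi^j} = j v_\chi$. Substituting into the main formula of that Corollary gives
\[
\tau_K^\star(\chi^j - \chi^{2j}) \;=\; \chi^j\!\left(\frac{v_{K,2}}{4\,v_{\chi^j}}\right)\xi^{-Tr(v_{\chi^j})} \;=\; \chi^j\!\left(\frac{4v_\chi}{4jv_\chi}\right)\xi^{-j\,Tr(v_\chi)} \;=\; \chi^j(j^{-1})\,\xi^{-j\,Tr(v_\chi)}.
\]
Finally, Corollary \ref{u=v} gives $Tr(v_\chi) = Tr(\vareps)$, yielding the announced formula
\[
\tau_K^\star(\chi^j - \chi^{2j}) \;=\; \chi^j(j^{-1})\,\xi^{-j\,Tr(\vareps)}.
\]

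There is no genuine obstacle here: the substantive work has all been done in Corollaries \ref{tauQ} and \ref{u=v} and in the analysis of the Hilbert symbol of the preceding subsection. The only point requiring a moment's care is the admissibility of the choice $c_{K,2} = p^2/(4v_\chi)$, which relies on $p$ being odd and on $K/\Q_p$ being unramified so that $\pi^2\D_K = p^2\bo_K$.
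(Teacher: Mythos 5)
Your proof is correct and follows essentially the same route as the paper, which deduces the theorem by substituting the choice $c_{K,2}=p^2/4v_\chi$ (so $v_{K,2}=4v_\chi$) and $v_{\chi^j}=jv_\chi$ into the formula of Corollary \ref{tauQ} and then invoking Corollary \ref{u=v} to replace $Tr(v_\chi)$ by $Tr(\vareps)$. Your explicit check that $c_{K,2}\bo_K=p^2\bo_K=\pi^2\D_K$ is exactly the admissibility remark the paper makes before stating the theorem.
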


The dependency relationships between our constants might look
complicated, so let us try to sum up how we fixed them. Our primitive
$p$-th root of unity $\zeta$ came first; the extension $M/K$ under
study determined a unit $\vareps$ up to $(\Z_p/p\Z_p)^\times$; we defined a Kummer generator $x$, then a
generator $h$ of $H=\Gal(M/K)$, and then a generator 
$\chi$ of $\wh H$; with $\chi$ came the unit $v_\chi$, but only modulo
$p\bo_K$; the knowledge of $\vareps$ enabled us to fix $v_\chi$ in Corollary
\ref{u=v} and finally $c_{K,2}$ in Theorem \ref{gauss_sum_theorem}. 

Apart from the
dependency upon the choice of $\zeta$, which is shared by the usual
Galois Gauss sum, our modified Galois Gauss sum thus also depends on $M$. This
does not prevent deducing Theorem \ref{THM} from Theorem \ref{local}
since only one extension $M_i/F_\wp$ has to be considered at each
wildly and weakly ramified prime ideal $\wp$ of $\bo$. 
\subsection{The product}
We can now end the proof of Theorem \ref{local}.
By Theorem \ref{norm_resolve_theorem} and Theorem
\ref{gauss_sum_theorem}, the product of our norm-resolvent and modified twisted Galois Gauss sum is $1$ when evaluated at the trivial character and we have,
for $j\in\{1,\ldots,p-1\}$: 
$$\mathcal{N}_{K/\Q_p}(\alpha_{M}\mid\chi^j)\tau_K^\star(\chi^j-\chi^{2j})
=\left(\chi(j^{-1})\zeta^{Tr(\vareps)(1-j^{1-p})/p}\right)^j\enspace.$$
Note that $1-j^{1-p}\in p\Z_p$; we now wish to show that this
equals $1$. We are thus left with showing
that $\chi(j)=\zeta^{Tr(\vareps)(1-j^{1-p})/p}$, which is equivalent to
$\theta_{M/K}(j)=h^{Tr(\vareps)(1-j^{1-p})/p}$, since $h\in H$ is such
that $\chi(h)=\zeta$. It is also equivalent to showing
\[\theta_{M/K}(j)^{p-1}=h^{Tr(\vareps)(j^{1-p}-1)/p}\enspace.\]
In order to shift this relation to
$\Gal(L/K')$, we notice that
$\theta_{M/K}(j)^{p-1}=\theta_{M/K}(N_{K'/K}(j))=\theta_{L/K'}(j)|_{M}$
and recall that $\wt h|_{M}=h$. Thus the relation holds if and only if:
$$\theta_{L/K'}(j)=\wt h^{\,Tr(\vareps)(j^{1-p}-1)/p}\enspace.$$
We can now evaluate these automorphisms at
$x^{1/p}$, recalling that $\wt h(x^{1/p})=\zeta x^{1/p}$ and
$(j,x)_{p,K'}=\frac{\theta_{L/K'}(j)(x^{1/p})}{x^{1/p}}$, so
we are left with proving:
$$(j,x)_{p,K'}=\zeta^{\,Tr(\vareps)(j^{1-p}-1)/p}\enspace.$$
Using the properties of the Hilbert symbol we get:
$$(j,x)_{p,K'}=\big(j,N_{K'/\Q_p(\zeta)}(x)\big)_{p,\Q_p(\zeta)}=\big(j,\zeta\big)_{p,\Q_p(\zeta)}^{Tr(\vareps)}=\left(\frac{\theta_{\Q_p(\zeta)}(j)(\xi)}{\xi}\right)^{Tr(\vareps)}\enspace.$$
From \cite[\S3.1 Remark after Theorem 2]{Serre-lubintate}
we know that $\theta_{\Q_p}(j)(\xi)=\xi^{-j}$. Therefore, reasoning
as in the proof of Corollary \ref{u=v}, we get 
$$\theta_{\Q_p(\zeta)}(j)(\xi)=\big(\ver\theta_{\Q_p}(j)\big)(\xi)=\theta_{\Q_p}(j)^{p-1}(\xi)=\xi^{j^{-(p-1)}}=\xi^{j^{1-p}}\enspace,$$
hence 
$$(j,x)_{p,K'}=\left(\xi^{j^{1-p}-1}\right)^{Tr(\vareps)}=\left(\zeta^{(j^{1-p}-1)/p}\right)^{Tr(\vareps)}$$
as desired.

This ends the proof of Theorem \ref{local}, hence also of Theorem \ref{THM}.

%
 
\bibliography{bib}
\end{document}